\newcommand{\mfp}{{\mathfrak p}}
\DeclareFontFamily{U}{skulls}{}
\DeclareFontShape{U}{skulls}{m}{n}{ <-> skull }{}
\newtheorem{teo}{Theorem}[section]
\newtheorem{cor}[teo]{Corollary}
\newtheorem{lem}[teo]{Lemma}
\newtheorem{pro}[teo]{Proposition}
\newtheorem{obs}[teo]{Remark}
\newtheorem{afr}[teo]{Claim}
\newtheorem{defi}[teo]{Definition}
\newtheorem{exe}[teo]{Example}
\newenvironment{dem}[1][Proof]{\noindent\textbf{#1.} }{\hfill \rule{0.5em}{0.5em}}
\newcommand{\dsum}{\displaystyle\sum}
\newcommand{\B}{\mathcal{B}}
\newcommand{\D}{\Delta}
\newcommand{\N}{\mathbb{N}}
\newcommand{\F}{\mathcal{F}}
\newcommand{\T}{\Theta}
\newcommand{\C}{\mathcal{C}}
\newcommand{\Om}{\Omega}
\newcommand{\om}{\omega}
\newcommand{\Gm}{\Gamma}
\newcommand{\Si}{\Sigma}
\newcommand{\p}{\mathcal{P}}
\newcommand{\Z}{\mathcal{Z}}
\newcommand{\R}{\Rightarrow}
\newcommand{\al}{\alpha}
\newcommand{\U}{\mathcal{U}}
\newcommand{\m}{{-1}}
\newcommand{\Aut}{\mathbf{Aut}}
\newcommand{\End}{\mbox{End}}
\newcommand{\Pic}{\mathbf{Pic}}
\newcommand{\Pics}{\mathbf{PicS}}
\newcommand{\Hom}{\mbox{Hom}}
\newcommand{\G}{\Gamma}
\newcommand{\s}{\sigma}
\newcommand{\et}{\theta}
\newcommand{\e}{\varepsilon}
\newcommand{\ot}{\otimes}
\newcommand{\op}{{\rm op}}
\begin{document}

\title[]{Partial generalized crossed products, Brauer groups and a comparison of  seven-term exact sequences}
\author[M. Dokuchaev]{Mikhailo Dokuchaev}
\address{Instituto de
Matem{\'a}tica e Estat\'\i stica,
Universidade de S\~ao Paulo, 
Rua do Mat\~ao, 1010,
05508-090, S\~ao Paulo, SP, Brasil}
\email{dokucha@ime.usp.br}

	\author[H. Pinedo]{H{\'e}ctor Pinedo}
	\address{Escuela de Matematicas, Universidad Industrial de Santander, Cra. 27 Calle 9  UIS
		Edificio 45, Bucaramanga, Colombia}
	\email{hpinedot@uis.edu.co}

\author[I.\ Rocha]{Itailma Rocha}
\address{Unidade Acad\^emica de Matem\'atica, Universidade Federal de Campina Grande - UAMat/UFCG, 
Avenida Apr\'{\i}gio Veloso, 882, 58429-900, Campina Grande, PB, Brasil.}
\email{itailma@mat.ufcg.edu.br}


\keywords{Partial action,  partial  representation,
  Galois extension, Galois cohomology, crossed product, Brauer group, Azumaya algebra, Picard group}

\subjclass[2000]{Primary  13B05; 16K50;  16S35; 16W22; Secondary   13A50; 16D20; 16H05.}

\date{}
\begin{abstract} Given a unital partial action $\alpha $ of a group $G$ on a commutative ring $R$ we denote by $ \Pics _{R^{\alpha}}(R) $ the Picard monoid of the isomorphism classes of  partially invertible $R$-bimodules, which are central over the subring $R^{\alpha} \subseteq R$ of $\alpha$-invariant elements, and consider a specific unital partial representation 
$\T : G \to \Pics _{R^{\alpha}}(R), $  along with the abelian group $\C(\T/R)$ of the isomorphism classes of partial generalized crossed products related to $\T,$ which already showed their importance in obtaining a partial action analogue of the Chase-Harrison-Rosenberg seven-term exact sequence. We give a description of  $\C(\T/R)$ in terms  partial generalized products of the form 
$\D(f \T)$ where $f$ is partial $1$-cocycle of $G$ with values in a submonoid of  $ \Pics _{R^{\alpha}}(R) .$ Assuming that $G$ is finite and that 
$R^{\alpha} \subseteq R$ is a partial Galois extension, we prove that any Azumaya  $R^\al$-algebra, containing  $R$ as a maximal  commutative  subalgebra, is isomorphic to a partial generalized crossed product. Furthermore, we show that the relative Brauer group 
$\B(R/R^\al)$  can be seen as a quotient of $ \C(\T/R)$ by a subgroup isomorphic to  the Picard group of $R.$ Finally, we prove that the analogue of the Chase-Harrison-Rosenberg  sequence,  obtained earlier for partial Galois extensions of commutative rings, can be derived from a recent seven-term exact sequence established in a non-commutative setting.
\end{abstract}

\maketitle
\section{Introduction}

In \cite{chase1965galois}, given a Galois extension of commutative rings with a finite Galois group,   S.U. Chase, D.K. Harrison, A. Rosenberg gave a seven-term exact sequence involving Galois cohomology groups, Picard groups and the relative Brauer group. Their proof   is based on the seven-term exact sequence, involving  Amitsur cohomology,   obtained  in \cite{chase1965amitsur} by means of spectral sequences. A constructive proof for the Chase-Harrison-Rosneberg exact sequence was given by T. Kanzaki in \cite{kanzaki1968generalized},  using the novel notion of a generalized crossed product. The latter concept was extended  by Y. Miyashita in \cite{miyashita1973exact} and employed  to establish a seven-term exact sequence related
 to an extension of non-necessarily commutative unital rings $R\subseteq S$ and a representation of a group $G$  by invertible $R$-subbimodules of $S.$ The form of the Miyashita’s sequence is not the same as that of the Chase-Harrison-Rosenberg sequence, but the latter can be obtained from the former by taking $S$ to be the skew group ring constructed from the action of the Galois group. Using their results from \cite{el2010invertible},
 L. El Kaoutit, J. Gómez-Torrecillas extended in \cite{el2012invertible} the Myashita’s sequence for the context of rings with local units, which  a significantly  more complicated technical situation.

Technical challenges also arise when replacing  actions by partial actions on algebras and representations by partial representations. 
The latter notions appeared in the theory of $C^*$-algebras \cite{E-1}, \cite{Mc}, \cite{exel1997twisted}, \cite{QR},  \cite{exel1998partial}, motivating algebraic and $C^*$-algebraic developments, and possessing  remarkable applications  \cite{E6}.  In algebra the new concepts are useful to graded algebras, Hecke algebras, 
Leavitt path algebras, Thompson’s groups, inverse semigroups, restriction semigroups and automata (see the survey article \cite{D3} and the references therein).  
Among the recent applications, we mention the significance of partial actions and partial representations  to dynamical systems associated 
with  separated graphs and related C*-algebras \cite{AraE1},
\cite{AL}, to paradoxical decompositions \cite{AraE1}, to shifts \cite{AL}, 
to full or reduced C*-algebras of E-unitary or strongly E*-unitary inverse semigroups  \cite{MiSt}, to topological higher-rank graphs \cite{RenWil}, to Matsumoto and Carlsen–Matsumoto C*-algebras of arbitrary subshifts  \cite{DE2}, to ultragraph C*-algebras \cite{GR3}, 
embeddings of inverse semigroups \cite{Khry1}, Ehresmann semigroups \cite{KudLaan} and to expansions of monoids in the class of two-sided restriction monoids \cite{Kud2}, with the latter involving a previous construction from \cite{Kud} based  on partial actions. The reader may consult the R. Exel's book \cite{E6} and the surveys \cite{D3} and \cite{Ba2017} for the basics  on partial actions and partial representations, theoretical developments and applications. 

An action of a finite group $G$ on a commutative unital ring $R$  induces an action of $G$ on the Picard group  ${\bf Pic}_R(R)$ of the isomorphism classes of the finitely generated projective $R$-modules of rank $1.$ Assuming that $R^G \subseteq R$ is a Galois extension, where $R^G$ stands for the subring of the $G$-invariant elements of $R,$ the Chase-Harrison-Rosenberg sequence is of the form
\begin{align*}
&1\to H^1(G, \U(R)) {\to} {\bf Pic}_{R^G}(R^G) {\to} {\bf Pic}_R(R)^G {\to }
H^2(G, \U(R)  ) {\to} B(R/R^G) {\to} H^1(G,{\bf Pic}_R(R)){\to}\\ & H^3(G, \U(R)),
\end{align*} where $\U(R)$ stands for the group of invertible elements of $R,$  ${\bf Pic}_R(R)^G$ is the subgroup of the $G$-invariant elements of  ${\bf Pic}_R(R)$  and  $B(R/R^G)$ is the relative Brauer group of the equivalence  classes of the Azumaya $R^{G}$-algebras split by $R.$

In a series of two articles \cite{DoPaPi}, \cite{DoPaPi2} a generalization of the Chase-Harrison-Rosenberg sequence was constructed  for a partial Galois extension of commutative rings. Galois theory based on partial actions was introduced in \cite{dokuchaev2007partial},  with further developments in \cite{CaenDGr}, \cite{CaenJan}, \cite{PRS2011}, \cite{BP2012}, \cite{KuoSzeto2014}, \cite{PaqTam2018}, \cite{KS}, \cite{JKS}, \cite{CaenFier}, \cite{KuoSzeto2023}.
New ingredients in the sequence of \cite{DoPaPi2} are the   cohomology groups based on partial actions, introduced in \cite{dokuchaev2015partial} (see Section~\ref{subsec:ParAcParRep} and Section~\ref{sec:ParCohomol}), and the 
 Picard semigroup ${\bf PicS}_R(R)$ of the finitely generated projective $R$-modules of rank $\leq 1$ (see Section~\ref{sec:Picard}).

A unital partial action $\alpha$ of a finite group $G$ on a commutative ring $R$  induces a unital partial action $\alpha ^*$ of $G$ on ${\bf PicS}_R(R)$ (see Example~\ref{pacal}).
Given a Galois extension $R^{\alpha} \subseteq R$ of commutative rings with a unital partial action $\alpha $ of a finite group $G$ on $R$, where $R^{\alpha}$ is the subring of $\alpha$-invariant elements of $R$ (see Section~\ref{subsec:ParAcParRep} for the definitions) the generalization of the Chase-Harrison-Rosenberg exact sequence in \cite{DoPaPi2} has the following form:

\begin{align}\label{Intro:Exact}
&1 \longrightarrow H^1(G,\alpha , R)
\longrightarrow {\bf Pic}_{R^\alpha}(R^\alpha )\longrightarrow{\bf PicS}_R(R)^{\alpha^*}\cap {\bf Pic}_R(R) \longrightarrow H^2(G,\alpha, R) \longrightarrow \\& B(R/R^\alpha) \longrightarrow
 H^1(G,\alpha^*,{\bf PicS}_R(R)) \longrightarrow   
H^3 (G,\alpha , R).\nonumber
\end{align}

 The above mentioned Miyashita's sequence was extended to the context of partial representations  in \cite{DoRo}, and it is a natural question if the exact sequence \eqref{Intro:Exact}  can be recovered from that in \cite{DoRo}.  We give a positive  answer, but the proof is essentially more laborious than that in the usual case and requires a better  understanding of some of the involved ingredients.

Miyashita's sequence is  related  to  a general ring extension $R'\subseteq S$ with the same unity and a fixed homomorphism $\Theta :  G \to  {\bf Inv} _{R'}(S),$ where $G$ is  an arbitrary  group and 
$ {\bf Inv} _{R'}(S)$ is the group of invertible $R'$-subbimodules of $S.$ 
One of the crucial ingredients of  Miyashita's sequence is an appropriate analogue  
${\mathcal  B} (\Theta /R')$ of the Brauer group, which is a  quotient of the abelian group ${\mathcal C} (\Theta /R')$ of the isomorphism classes of certain generalized crossed products related to $\Theta .$  Another important new ingredient is the group $ \p(S/R')$ of isomorphism classes of certain objects  defined using $R'$-bilinear maps $\phi: P \to X,$  where $P$ is an $R'$-bimodule, $X$ is an $S$-bimodule, with $\phi$ satisfying  additional conditions.

The starting point of the sequence in \cite{DoRo} is an extension of non-necessarily commutative  rings $R'\subseteq S$ with he same unity and  
   a    unital partial representation
   $\G : G  \to \mathcal{S}_{R'}(S),$ where $G$ is an arbitrary group and  $\mathcal{S}_{R'}(S)$ is the monoid of the $R'$-subbimodules of $S,$ equipped with the element-wise multiplication of  subsets of $S$  (see formula \eqref{eq:MonoidS}).  The analogue 
$\mathcal{B}(\G/R')$ of the 
Brauer group is defined for a unital partial representation of the form  $\G:G\to \Pics(R'),$ where   $\Pics(R')$ is the monoid of the isomorphism classes of the partially invertible $R'$-bimodules, with the multiplication induced by $\ot _{R'}$ and neutral element $R'$ (see Section~\ref{sec:Picard} for the definition). Then  $\mathcal{B}(\G/R')$ is an appropriate quotient (see \eqref{GroupB}) of the abelian group $\C(\G/R')$ of the isomorphism classes of certain generalized partial crossed products  related to $\Gamma $ (see Section~\ref{subsec:ParAcParRep} and Section~\ref{sec:groupC} for the details).
A unital partial representation $\G:G\to \Pics(R')$  induces a unital partial action $\bar{\alpha}$ of $G$ on the center $\mathcal Z = \mathcal Z (R') $ of $R'$ (see \cite[ Proposition 3.11]{DoRo}) and a unital partial action 
$\alpha ^*$ of $G$ on $\Pics _{\mathcal Z}(R'),$ where 
$\Pics _{\mathcal Z}(R')$ is the submonoid of $\Pics (R'),$
formed by the isomorphism classes of those partially invertible $R'$-bimodules, which are central  over 
${\mathcal Z}$  (see Example~\ref{pacal}). Moreover, the partial action $\alpha ^*$ can be restricted to a unital partial action   $\bar{\alpha}^*$ of $G$ on $\Pics _0(R')$ \cite[p. 56]{DoRo}, where $\Pics _0(R')$ is a submonoid of $\Pics (R')$ defined by a natural divisibility property of the considered modules (see  \eqref{PicsZero}).

Given a  unital partial representation $\G : G  \to \mathcal{S}_{R'}(S),$ the exact sequence in \cite{DoRo} has the form:

\begin{align}\label{IntroNoncommutExact}
		&1 \longrightarrow   H^{1}(G, \overline\al,\Z) \longrightarrow   \p_\Z(\D(\Gamma)/R')^{(G)} \longrightarrow  \Pic_\Z(R')\cap\Pics_\Z(R')^{\al^*} \longrightarrow  H^2(G,\overline\al,\Z) 
			\longrightarrow \\ & \mathcal{B}(\Gamma/R')\longrightarrow 	 
			\overline{H}^1(G,\overline\al^*,\Pics_0(R'))\longrightarrow  H^3(G,\overline\al,\Z),\nonumber  
	\end{align} where $H^{i}(G, \overline\al,\Z),$ $i=1,2,3,$ are the partial cohomology groups (see Section~\ref{sec:ParCohomol}), $\overline{H}^1(G,\overline\al^*,\Pics_0(R'))$ is an appropriate quotient of the partial cohomology group ${H}^1(G,\overline\al^*,\Pics_0(R'))$ (see the defining sequence \eqref{GroupOverlineH}) and   
	$ \p_\Z(\D(\Gamma)/R')^{(G)}$ is a certain subgroup of  $ \p (\D(\Gamma)/R')$ (see  formula \eqref{pz}), where $\D(\Gamma)$  is the partial ge\-ne\-ra\-li\-zed crossed product related to $\Gamma .$

	The paper begins  with preliminaries on the 
Picard group ${\bf Pic} (R)$ and the Picard monoid 	
$ \Pics(R)$ of a non-necessarily commutative ring $R$ in Section~\ref{sec:Picard}, in particular,  the definitions of the subgroup 
${\bf Pic} _k (R) \subseteq {\bf Pic} (R)$ and that of the submonoid $\Pics _k(R) \subseteq \Pics(R)$ are recalled, when $R$ is an algebra over a  commutative ring $k.$ In Section~\ref{sec:Group P}, for an extension $R\subseteq S$ of  non-necessarily commutative rings with the same unity element the group $\p(S/R)$ is defined, as well as its subgroup $\p _k(S/R)$, when $R$ is a $k$-algebra. Under the latter condition, assuming that    $R$ is commutative and  that $k \subseteq  \Z(S),$ in Proposition~\ref{morfismoentrePicePSR} a group homomorphism 
$ \xi:  \Pic_k(k)  \to  \p_R(S/R)$ is constructed.

Section~\ref{subsec:ParAcParRep} deals with a background  on partial actions, partial representations, partial Galois extensions of commutative rings and partial generalized crossed products. For a unital partial action $\alpha $ of a group $G$   on a commutative ring $R$ an important partial representation
$\T : G \to \Pics _{R^{\alpha}}(R) $  is recalled, which is crucial for our comparison of  sequences \eqref{Intro:Exact} and \eqref{IntroNoncommutExact}.
 In particular, it is employed in  Example~\ref{pacal} to give a characterization of a unital partial action $\alpha ^*$ of $G$ on $\Pics _R (R)$ induced by $\alpha .$ In Lemma~\ref{fs} a convenient factor set for $\T$ is constructed, which is used, in particular,  to identify  the corresponding partial generalized crossed product $\Delta (\T )$ with the partial skew group ring
$ R \star _{\alpha }G$  in Lemma~\ref{pcgpisoskew}. Next, given  a partial Galois   extension $ R^\alpha \subseteq R$ of commutative rings the above mentioned homomorphism $ \xi $ is used in Theorem~\ref{isopic} to produce an isomorphism 
\begin{equation*}\label{Intro:IsoPic}
\Pic_{R^\al}(R^\al)  \to \p_R(S/R)^{(G)},
\end{equation*} where  $S= R\star_{\al}G.$ This is a first step towards the comparison of the exact sequences \eqref{Intro:Exact} and 
\eqref{IntroNoncommutExact}.

In Section~\ref{sec:ParCohomol} basics on partial group cohomology  and partial crossed products are given, whereas 
Section~\ref{sec:groupC} is dedicated to the group $\C(\T/R)$ of the isomorphism classes of partial generalized crossed products related to the abobve mentioned partial representation $\T.$ A very useful characterization of $\C(\T/R)$ is obtained in Theorem~\ref{prop:grupoC}, in which
$\C(\T/R)$ is described as the group of the isomorphism classes of the partial generalized products of the form $\D(f \T)$ where $f$ is partial $1$-cocycle of $G$ with values $\Pics _R (R).$

 For a partial Galois extension $R^{\alpha} \subseteq R$ of commutative rings 
our understanding of the relation between the relative Brauer group $B(R/R^\alpha)$ and partial generalized crossed products is refined in 
Section~\ref{sec:Brauer}. Firstly, in Theorem~\ref{Aisopcgp} for   an Azumaya  $R^\al$-algebra $A$, containing  $R$ as a maximal  commutative  subalgebra, we establish an isomorphism   $\D(f_A \Theta)\simeq A^{\op}$ of $R^\al$-algebras, where the cocycle $f_A\in Z^1(G,\al^*,\Pics_R(R))$ is constructed using $A.$
Then, after proving some additional facts, we establish one of our main results, Theorem~\ref{exac},  which says that there is an exact sequence  
$$\Pic_R(R)\to  \C(\T/R) \to  \B(R/R^\al)\to  1,$$ clarifying the relation between $   \B(R/R^\al) $ and $ \C(\T/R).$

Our comparison of sequences \eqref{Intro:Exact} and  \eqref{IntroNoncommutExact} is given in Section~\ref{sec:Comparing}. For this purpose we recall first some details from \cite{DoRo} considering   an arbitrary  extension of  non-necessarily commutative rings  $R'\subseteq S$ with the same unity  and   a unital  partial representation $\G : G \to  \mathcal{S}_{R'}(S),$ of a group $G$ in the semigroup of $R'$-subbimodules of $S.$ In particular, we  give the definitions of the groups 
$\Pics_0(R')$ (see \eqref{PicsZero}), $\mathcal{B}(\Gamma/R')$ (see \eqref{GroupB}) and 
$\overline{H^1}(G,\al^*,\Pics_0(R'))$ (see \eqref{GroupOverlineH}). Furthermore, we
	recall  the definition of the maps in  \eqref{IntroNoncommutExact}. Then considering 
a partial Galois extension $R^{\alpha} \subseteq R$ of commutative rings with a unital partial action $\alpha $ of a finite group $G$ on $R,$ we take  $S= R \star _{\alpha} G,$ $R'=  R$ and $\G = \T $  in \eqref{IntroNoncommutExact}, which is an idea similar to that used in the case of global  actions. Employing   results obtained in  previous sections we show that 
in this case the groups in  \eqref{Intro:Exact}    are either equal, or isomorphic to the corresponding groups of the sequence 
\eqref{IntroNoncommutExact}, and that the homomorphisms in \eqref{Intro:Exact} can be obtained from those in \eqref{IntroNoncommutExact}. Some final remarks are made in Section~\ref{sec:final}.

In all what follows, unless otherwise stated,  all rings will be considered associative  with unity and the modules over  rings will be unital. We write that $M$ is a {\it  f.g.p. $S$-module} if $M$ is a  projective and finitely generated (left) $S$-module. Moreover,  we shall say that a ring $S$ is an extension of a ring $R$ if $R$ is a subring of $S$ and $1_S=1_R.$ We denote by $ \Z(S)$  the center of a ring $S$ and by $\U(S)$ its group of invertible elements. 
If $S$ is commutative,  in several occasions we shall consider a left $S$-module $M$ as a central $S$-$S$-bimodule, i.e, an $S$-$S$-bimodule $M$
with $ms = sm$ for all $m\in M$ and $s\in S.$  However, in general an $S$-bimodule over a commutative ring $S$ will not be considered central, unless its centrality will be clear from the context. 
  Moreover,   $G$ will stand for a group with identity $1.$

\section{Background}\label{sec:Background}

In this section we recall some notions and results which shall be useful for us.
\subsection{The Picard group and the Picard monoid}\label{sec:Picard}
Let $M$ and $N$ be bimodules over a non-necessarily commutative ring $R.$ Then the sets $\Hom(M_R,N_R)$ and $\Hom(_RM,_RN)$ are $R$-bimodules via:
\begin{equation}
(r\cdot f)(m)=rf(m) \ \ \mbox{and} \ \ (f\cdot r)(m)=f(rm), \ \ f\in \Hom(M_R,N_R), m \in M, r \in R, \label{estrelaMebimodulo}
\end{equation} 
\begin{equation}
(r\cdot g)(m)=g(mr) \ \ \mbox{and} \ \ (g\cdot r)(m)=g(m)r, \ \ g \in \Hom(_RM,_RN), m\in M, r\in R .\label{Mestrelaebimodulo} 
\end{equation}

In particular, taking $N=R,$ we have that the sets  ${^*M}=\Hom(M_R,R_R)$ and 
${M^*}=\Hom(_RM,_RR)$ are $R$-bimodules via  (\ref{estrelaMebimodulo}) and (\ref{Mestrelaebimodulo}).

We recall from \cite[Chapter II]{Bass}   that an $R$-bimodule $P$ is called {\it invertible},  if  there  exists an $R$-bimodule $Q$ such that $P\ot_RQ\simeq R\simeq Q\ot_RP,$ as $R$-bimodules.  Then the    \textit{Picard group of R}, denoted by  $\Pic(R),$  is the set of the isomorphism classes  of  invertible $R$-bimodules with multiplication induced by $\otimes_R$.  By  \cite[Theorem 1.1]{morita1967endomorphism},  if $[P] \in \Pic(R),$ then ${^*P}\simeq P^*$ as $R$-bimodules, and hence   $[^*P]=[P^*]=[P]^\m$ in $\Pic(R)$. We shall write $[P]^\m=[P^\m]$.  For a $k$-algebra $R$ over a commutative ring $k$ we denote by  $\Pic_k(R)$ the set of the isomorphism classes of  invertible  $R$-bimodules which are central  over $k,$  i.e.,  $rp=pr$ for all  $r \in k$ and $p \in P$. In the case when  $R$  is commutative, we have that  $[P]\in \Pic_R(R)$ if and only if $P$ is a finitely generated projective $R$-module of rank $1$  {(see, for example, \cite[II, \S 5]{demeyer1971separable})}.


 Given an extension of rings $R\subseteq S$ and an $R$-bimodule $M$, we consider  $M\otimes_RS$ as an  $S$-bimodule via 
	\begin{equation}\label{eq:bimodStructureOnTensorPr}
	s_1\cdot (m\ot _R s)\cdot s_2=m\ot _R s_1ss_2, \ \mbox{for} \ m \in M \  \mbox{and} \ s,s_1,s_2 \in S.
	\end{equation}
Then there is an  $S$-bimodule isomorphism
\begin{equation}\label{eq:easyBimodMap}
S\otimes_S(M\otimes_RS)\simeq M\otimes_R S, \ \mbox{ given by } \ s\otimes_S(m\otimes_R s')\mapsto m\otimes_R s s'.
\end{equation}
Analogously, we consider  $S \otimes_R M$ as an  $S$-bimodule via 
	\begin{equation}\label{eq:bimodStructureOnTensorPr2}
	s_1\cdot (s\ot _R m)\cdot s_2= s_1ss_2 \ot _R m, \ \mbox{for} \ m \in M \  \mbox{and} \ s,s_1,s_2 \in S.
	\end{equation}
We proceed with the next.
\begin{lem}\label{PotSemPicS} Let $R\subseteq S$ be a ring extension and suppose that  $R\subseteq \Z(S)$. Then for  $[P]\in \Pic(R)$, one has that $[P\ot_RS]\in \Pic(S)$,  where the  $S$-bimodule structure of  $ P\ot_RS$ is given by \eqref{eq:bimodStructureOnTensorPr}.
	\end{lem}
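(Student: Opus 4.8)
The plan is to exhibit an explicit inverse for the class $[P\ot_R S]$ inside $\Pic(S)$. Since $[P]\in\Pic(R)$, I fix an $R$-bimodule $Q$ together with $R$-bimodule isomorphisms $P\ot_R Q\simeq R\simeq Q\ot_R P$. Observe first that $R\subseteq\Z(S)$ forces $R$ to be commutative, so $S$ may be regarded as an $R$-algebra; I equip $Q\ot_R S$ with the $S$-bimodule structure \eqref{eq:bimodStructureOnTensorPr}. The goal then becomes to show that $[Q\ot_R S]=[P\ot_R S]^{-1}$ in $\Pic(S)$, that is, that
$$(P\ot_R S)\ot_S(Q\ot_R S)\simeq S\simeq(Q\ot_R S)\ot_S(P\ot_R S)$$
as $S$-bimodules. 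Because the two hypotheses on $Q$ are symmetric in $P$ and $Q$, it suffices to establish the left-hand isomorphism and then repeat the computation with the roles of $P$ and $Q$ interchanged.

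First I would assemble a chain of natural maps
$$(P\ot_R S)\ot_S(Q\ot_R S)\;\longrightarrow\;(P\ot_R Q)\ot_R S\;\longrightarrow\;R\ot_R S\;\longrightarrow\;S,$$
in which the middle arrow is induced by the chosen isomorphism $P\ot_R Q\simeq R$ and the last is the canonical unit isomorphism. On elementary tensors the first arrow should send $(p\ot s)\ot_S(q\ot t)$ to the class of $(p\ot q)\ot st$; the auxiliary isomorphism \eqref{eq:easyBimodMap} is convenient here to absorb the redundant $S$-factors coming from the tensor product over $S$ and to identify the intermediate terms. Granting that this first map is an $S$-bimodule isomorphism, the remaining two arrows are routine, and composing them yields $(P\ot_R S)\ot_S(Q\ot_R S)\simeq S$.

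The delicate point, and the step I expect to be the main obstacle, is the well-definedness and $S$-bilinearity of that first comparison map. The difficulty is concentrated in the balancing relation $pr\ot s=p\ot rs$ of \eqref{eq:bimodStructureOnTensorPr}: this relation is phrased through the \emph{right} $R$-action on $P$ transferred onto $S$, so one must reconcile it with the pairing $P\ot_R Q\to R$, which a priori interacts with the two $R$-actions of $P$ and $Q$ in different ways. This is exactly where the hypothesis $R\subseteq\Z(S)$ becomes indispensable: centrality of the elements of $R$ in $S$ lets the scalars produced by the $R$-balancing slide freely past both the left and the right $S$-actions (which, by \eqref{eq:bimodStructureOnTensorPr}, both act on the $S$-factor), so that the map respects the full $S$-bimodule structure rather than being merely additive or one-sidedly linear. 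Should the direct verification on generators prove awkward because of this reconciliation, an equivalent and more robust route is to use that $P$ is finitely generated projective as a one-sided $R$-module and to transport the evaluation and coevaluation morphisms $^{*}P\ot_R P\simeq R$ and $R\simeq P\ot_R{}^{*}P$ along the base change $-\ot_R S$; centrality again guarantees that the base-changed maps are $S$-bimodule isomorphisms, which identifies $[P\ot_R S]^{-1}$ with $[{}^{*}P\ot_R S]$ and completes the proof.
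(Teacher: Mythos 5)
Your proposal is correct and takes essentially the same route as the paper: the paper likewise takes the inverse class $[P^\m]$, builds the chain of $S$-bimodule isomorphisms $(P\ot_RS)\ot_S(P^{\m}\ot_RS)\simeq P\ot_R(S\ot_S(P^{\m}\ot_RS))\simeq P\ot_R(P^\m\ot_RS)\simeq R\ot_RS\simeq S$ using \eqref{eq:easyBimodMap}, invokes $R\subseteq \Z(S)$ to ensure the final identification $R\ot_RS\simeq S$ is an isomorphism of $S$-bimodules, and handles $(P^\m\ot_RS)\ot_S(P\ot_RS)\simeq S$ symmetrically. Your additional discussion of the well-definedness of the comparison map (and the fallback via $^{*}P$) is more detailed than the paper's one-line justification, but it does not alter the argument.
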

	\begin{dem}
		Let $[P^\m]$ be the inverse of  $[P]$ in $\Pic(R)$. Then there is a chain of  $S$-bimodule isomorphisms
		\begin{equation*}
			(P\ot_RS)\ot_S(P^{\m}\ot_RS)\simeq  P\ot_R(S\ot_S(P^{\m}\ot_RS))\simeq P\ot_R(P^\m\ot_RS)\simeq R\ot_RS\simeq S,
		\end{equation*}  the condition  $R\subseteq \Z(S)$ being used to ensure that  the natural isomorphim $ R\ot_RS\simeq S$  is an  isomorphism of $S$-bimodules.
		Analogously one shows that  $(P^\m\ot_RS)\ot_S(P\ot_RS)\simeq  S$, as $S$-bimodules.
		Therefore, $[P\ot_RS]\in \Pic(S)$. \end{dem}

We say that an  
$R$-bimodule $P$ is \textit{partially invertible} if 

\begin{itemize}
	\item[(i)] $P$ is finitely generated projective left and right  $R$-module;
	\item[(ii)] The maps
	\begin{center}
		\begin{tabular}{c c c}
			$\begin{array}{c c l}
			R& \longrightarrow & \End(P_R)\\
			r & \longmapsto & (p\mapsto rp)
			\end{array}$ & and & $\begin{array}{c c l}
			R& \longrightarrow & \End(_RP)\\
			r & \longmapsto & (p\mapsto pr)
			\end{array}$
		\end{tabular}
	\end{center}
	are epimorphisms. 
	
	\end{itemize}

We denote by $\mathbf{PicS}(R)$ the set of the isomorphism classes $[P]$ of partially invertible $R$-bimodules, 
that is, 
$
\mathbf{PicS}(R)=\{[P]: P \ \mbox{is a partially invertible $R$-bimodule}\}. 
$

We recall the next.
\begin{pro}\cite[Proposition 2.6]{DoRo} The set  $\Pics(R)$ is a monoid with multiplication defined by $[P][Q]=[P\ot_RQ],$ for all  $[P],[Q]\in \Pics(R).$
\end{pro}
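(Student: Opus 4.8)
The plan is to verify the monoid axioms for $\Pics(R)$ under the operation $[P][Q]=[P\ot_R Q]$, the key preliminary being that this operation is well defined and stays inside the set of isomorphism classes of partially invertible bimodules. First I would check \emph{closure}: given partially invertible $R$-bimodules $P$ and $Q$, I must show $P\ot_R Q$ is again partially invertible. For condition (i), since $P$ and $Q$ are each finitely generated projective as left (resp. right) $R$-modules, one uses that $-\ot_R Q$ sends a finite free summand on the left factor to a finite sum of copies of $Q$, so $P\ot_R Q$ is finitely generated projective on the left; the right-module case is symmetric. For condition (ii), I would show that the structure map $R\to\End((P\ot_R Q)_R)$ is surjective by expressing it through the corresponding surjections for $P$ and $Q$: the left $R$-action on $P\ot_R Q$ factors through the left action on the $P$-factor, so the image of $R$ already exhausts the scalar operators coming from $P$, and surjectivity is inherited; again the right-handed statement is dual.

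Next I would confirm that the operation is \emph{well defined on isomorphism classes}: if $P\simeq P'$ and $Q\simeq Q'$ as $R$-bimodules, then $P\ot_R Q\simeq P'\ot_R Q'$ as $R$-bimodules, which is the standard functoriality of $\ot_R$ applied to the two bimodule isomorphisms. Then I would establish \emph{associativity}, $([P][Q])[T]=[P]([Q][T])$, by invoking the natural associativity isomorphism $(P\ot_R Q)\ot_R T\simeq P\ot_R(Q\ot_R T)$ of $R$-bimodules, checking that it respects both the left and right $R$-actions. Finally, for the \emph{identity element} I would take $[R]$, noting that $R$ itself is trivially partially invertible (it is free of rank one on each side and the structure maps $R\to\End(R_R)$, $R\to\End({}_R R)$ are the canonical isomorphisms, hence surjective), and the canonical isomorphisms $R\ot_R P\simeq P\simeq P\ot_R R$ of $R$-bimodules give $[R][P]=[P]=[P][R]$.

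I expect the main obstacle to be the verification of closure under condition (ii), the surjectivity of the structure maps, since this is where partial invertibility differs from ordinary invertibility and where the tensor product interacts nontrivially with the epimorphism requirement. The delicate point is to argue cleanly that surjectivity of $R\to\End(P_R)$ (together with the corresponding map for $Q$) forces surjectivity of $R\to\End((P\ot_R Q)_R)$, rather than merely of a partial analogue; here I would rely on the fact that every right-$R$-linear endomorphism of $P\ot_R Q$ compatible with the bimodule structure is induced by left multiplication by an element already in the image of $R$, using that $P$ is finitely generated projective so that $\End(P_R)$ is computed via a dual basis. All remaining verifications — functoriality, associativity, and the identity — are formal consequences of standard properties of $\ot_R$ and require no new input beyond the definitions recalled above.
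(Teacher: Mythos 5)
Your overall skeleton (closure under condition (i), closure under condition (ii), well-definedness on isomorphism classes, associativity, unit $[R]$) is the right one, and you correctly single out closure under condition (ii) as the only non-formal point; note that the paper itself quotes this proposition from \cite{DoRo} without proof, so what matters is whether your plan would actually close that step. The formal parts are fine, up to one cosmetic slip in (i): a decomposition of the \emph{left} tensor factor $P$ must be taken as a \emph{right} $R$-module decomposition (only right $R$-linear maps out of $P$ induce well-defined maps on $P\ot_R Q$), and it exhibits $P\ot_R Q$ as a right-module summand of $Q^n$, hence gives the right f.g.p.\ condition, not the left one; the left condition comes symmetrically from a left-module decomposition of $Q$ inside $P^n$.

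The genuine gap is in your treatment of condition (ii). First, the observation that the left action on $P\ot_R Q$ ``factors through the $P$-factor'' only shows that the image of $R$ in $\End((P\ot_R Q)_R)$ coincides with the image of $\End(P_R)$ under $\phi\mapsto \phi\ot \mathrm{id}_Q$; it says nothing about why that image is \emph{all} of $\End((P\ot_R Q)_R)$, which is the whole point. Second, your refined claim restricts attention to right $R$-linear endomorphisms of $P\ot_R Q$ that are ``compatible with the bimodule structure'': this is the wrong class, since condition (ii) demands that \emph{every} right $R$-linear endomorphism be a left multiplication, such endomorphisms are in general not bimodule maps, and for noncommutative $R$ left multiplications themselves are not left $R$-linear. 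Third, surjectivity of $R\to\End(Q_R)$ must enter essentially --- take $P=R$ and the statement to be proved \emph{is} that map's surjectivity --- yet the mechanism you describe never uses it. The repair: take a dual basis $\{p_j,h_j\}$ of $P$ as a right $R$-module and set $u_j: P\ot_R Q\ni p\ot q\mapsto h_j(p)q\in Q$ and $v_j: Q\ni q\mapsto p_j\ot q\in P\ot_R Q$; these are right $R$-linear and $\sum_j v_ju_j=\mathrm{id}$. Any $\psi\in\End((P\ot_R Q)_R)$ then equals $\sum_{j,k} v_j(u_j\psi v_k)u_k$ with $u_j\psi v_k\in\End(Q_R)$, which by surjectivity for $Q$ is left multiplication by some $r_{jk}\in R$; reassembling gives $\psi=\phi\ot\mathrm{id}_Q$ with $\phi(p)=\sum_{j,k}p_jr_{jk}h_k(p)\in\End(P_R)$, and surjectivity for $P$ then writes $\phi$, hence $\psi$, as a left multiplication. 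The symmetric argument, with a left dual basis of $Q$ and the surjections onto $\End({}_RQ)$ and $\End({}_RP)$, handles $\End({}_R(P\ot_R Q))$ and completes closure under (ii).
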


Obviously, the group of invertible elements of  $\Pics(R)$ is  $\Pic(R),$ since for $[P]\in \Pic(R)$ the maps in (ii) are well known isomorphisms. 

 Suppose that $R$ is a  $k$-algebra over a commutative ring $k.$ In a similar way to  $\Pic(R)$  we denote by  $\Pics_k(R)$ the set of the isomorphism classes of  partially invertible  $R$-bimodules which are central  over $k.$ 
If  $R$ is commutative, then it follows by (3.1) in  \cite{DoPaPi} that 
\begin{equation}\label{upic}
\Pics_R(R)=\bigcup_{e\in E(R)}\Pic_R(Re),
\end{equation}
where $E(R)$ denotes the set of idempotents of $R.$  Observe that if $R$ is commutative, then $\Pics_R(R)$ is the semigroup of the finitely generated projective $R$-modules 
of rank $\leq 1$ (see \cite[pp. 743-744]{DoPaPi}).

We shall also use the following facts. The proof of the first can be  obtained by localization,  for the second one we refer to the literature.

\begin{lem}\label{MMotNpfgimplicaNpfg} Let $R$ be a commutative ring,  $M,N$ be R-modules such that $M$ and  $M\ot_R N$ are f.g.p. R-modules, and $M_\mfp\ne 0$ for all $\mfp \in {\rm Spec}(R).$
Then, N is also a  f.g.p.  R-module.
\end{lem}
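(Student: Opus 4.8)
The plan is to prove Lemma~\ref{MMotNpfgimplicaNpfg} by localizing at an arbitrary prime $\mfp \in {\rm Spec}(R)$ and showing that $N_\mfp$ is free of finite rank over the local ring $R_\mfp$, with the rank bounded locally in a way that yields finite generation and projectivity globally. The key observation is that finite generation and projectivity are local properties for finitely presented modules, so the whole statement reduces to a computation over the local rings $R_\mfp$.

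First I would fix $\mfp \in {\rm Spec}(R)$ and pass to the localization $R_\mfp$, which is a local ring with residue field $\kappa(\mfp)$. Since localization is exact and commutes with tensor products, both $M_\mfp$ and $(M\ot_R N)_\mfp \simeq M_\mfp \ot_{R_\mfp} N_\mfp$ are finitely generated projective, hence free of finite rank over the local ring $R_\mfp$; write $M_\mfp \simeq R_\mfp^{\,m}$ with $m \geq 1$ by the hypothesis $M_\mfp \neq 0$. Then the freeness of $M_\mfp \ot_{R_\mfp} N_\mfp \simeq N_\mfp^{\,m}$ must be leveraged to force $N_\mfp$ itself to be free. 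The cleanest route is to tensor down to the residue field: reducing modulo the maximal ideal gives that $(N_\mfp^{\,m}) \ot_{R_\mfp} \kappa(\mfp) \simeq (N_\mfp \ot \kappa(\mfp))^{\,m}$ is a finite-dimensional $\kappa(\mfp)$-vector space, so $N_\mfp \ot_{R_\mfp}\kappa(\mfp)$ is finite-dimensional; Nakayama's lemma then produces a finite generating set for $N_\mfp$, showing $N_\mfp$ is a finitely generated $R_\mfp$-module for every $\mfp$. With finite generation of $N$ in hand (this needs a quasi-compactness argument over ${\rm Spec}(R)$, choosing a common finite generating set on a neighborhood of each prime), $N$ becomes finitely presented, and projectivity can be checked locally: I would argue that $N_\mfp$ is free by observing that $N_\mfp^{\,m}$ free of finite rank forces $N_\mfp$ to have a well-defined rank and be free over the local ring, e.g.\ by comparing the minimal number of generators with the rank computed from $N_\mfp^{\,m}$.

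The step I expect to be the main obstacle is passing from the local statement to a global one, specifically deducing that $N$ is \emph{finitely generated} over $R$ (not merely locally finitely generated), since the hypothesis only gives that $M$ and $M\ot_R N$ are finitely generated, while $N$ a priori could fail to be finitely generated if $M$ has rank zero somewhere. This is exactly where the hypothesis $M_\mfp \neq 0$ for all $\mfp$ is essential: it guarantees $m \geq 1$ at every prime, so the isomorphism $N_\mfp^{\,m} \simeq (M\ot_R N)_\mfp$ genuinely controls $N_\mfp$. I would handle the global finite-generation by a standard quasi-compactness argument: for each $\mfp$ pick finitely many elements of $N$ generating $N_\mfp$, spread them out over a basic open set, and cover ${\rm Spec}(R)$ by finitely many such sets. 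Once finite generation and finite presentation are secured, the local freeness established above combines with the criterion that a finitely presented module that is locally free is projective, completing the proof that $N$ is a f.g.p.\ $R$-module.
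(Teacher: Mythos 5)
Your local analysis reaches the correct conclusion, but the way you argue it is circular: the form of Nakayama's lemma that lifts generators from $N_\mfp\ot_{R_\mfp}\kappa(\mfp)$ to $N_\mfp$ is only valid for modules \emph{already known} to be finitely generated, which is exactly what is at stake (over $\mathbb{Z}_{(p)}$ the module $\mathbb{Q}$ has zero reduction modulo $p$ yet is not finitely generated). This part is easily repaired: since $m\ge 1$, $N_\mfp$ is a direct summand of $N_\mfp^{\,m}\simeq (M\ot_R N)_\mfp$, a finitely generated free $R_\mfp$-module, hence $N_\mfp$ is finitely generated projective, hence free, with no need of residue fields.

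The genuine gap is the local-to-global step, which you flag as the main obstacle and then dismiss with a ``standard quasi-compactness argument'' that does not exist at this level of generality. For a module $N$ not yet known to be finitely generated, the fact that finitely many elements of $N$ generate $N_\mfp$ does \emph{not} produce a basic open neighborhood of $\mfp$ on which they generate: take $R=\mathbb{Z}$ and $N=\bigoplus_{q}\mathbb{Z}/q\mathbb{Z}$ (sum over all primes $q$); every localization $N_{(q)}\simeq\mathbb{Z}/q\mathbb{Z}$ is generated by one element, yet no finite subset of $N$ generates $N_f$ for any $0\neq f\in\mathbb{Z}$, and $N$ is not finitely generated. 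So local finite generation alone never globalizes, and your sketch uses the hypothesis $M_\mfp\neq 0$ only locally, whereas its real role is global: for a f.g.p.\ module, $M_\mfp\neq 0$ for all $\mfp$ forces the trace ideal $\tau(M)=\sum_{f\in M^*}f(M)$ to equal $R$ (check this by localization --- that is the only localization the lemma really needs), i.e.\ $M$ is a generator, equivalently faithfully flat. With this ingredient you can either repair your route (tensor a map $\varphi:R^k\to N$ that is an isomorphism at $\mfp$ with $M$; since $M^k$ and $M\ot_RN$ are finitely presented, $M\ot\varphi$ is an isomorphism on some $D(f)$, and faithful flatness of $M_f$ descends this to $\varphi_f$; finitely many such $D(f)$ cover ${\rm Spec}(R)$ and a finitely presented locally free module is f.g.p.), or bypass localization of $N$ entirely: the trace surjection $M^*\ot_R M\to R$ splits because $R$ is projective, so $N\simeq R\ot_RN$ is a direct summand of $M^*\ot_R(M\ot_RN)$, which is f.g.p.\ as a tensor product of f.g.p.\ modules; hence $N$ is f.g.p. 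The paper itself only says the lemma ``can be obtained by localization,'' and indeed it can --- but the localization must be applied to the trace ideal of $M$ (or packaged as faithful flatness), not to a putative finite generating set of $N$.
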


\begin{lem}\cite[Chapter I, Lemma 3.2 (b)]{demeyer1971separable}
\label{MotNisoMotN'implicaNisoN'} Let $R$ be a commutative ring, $M$  be  a f.g.p. R-module, and  $N,N'$ be $R$-modules  such that  $M\ot_RN\simeq M\ot_RN'$ as $R$-modules, then  $N\simeq N'$ as $R$-modules.
\end{lem}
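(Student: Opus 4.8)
The plan is to deduce $N\simeq N'$ by showing that the hypotheses force $M$ to be faithfully flat and then transporting the given isomorphism through the functor $M\ot_R-$. Throughout I read the statement with $M$ \emph{faithful}, as in the preceding Lemma~\ref{MMotNpfgimplicaNpfg}, so that $M_\mfp\neq 0$ for every $\mfp\in{\rm Spec}(R)$; a finitely generated projective module with this property is locally free of positive rank, hence faithfully flat, and in fact a progenerator, so it admits a dual $M^\m=\Hom({}_RM,{}_RR)$ together with a trace pairing $M^\m\ot_R M\to R$ which is surjective.

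First I would record the two elementary functorial facts I intend to use. (a) For a faithfully flat module $M$, an $R$-linear map $f$ is an isomorphism as soon as $\mathrm{id}_M\ot f$ is one, because $M\ot_R-$ is exact and reflects exactness. (b) For a finitely generated projective $M$ there is a natural isomorphism $\Hom({}_RM,X)\simeq M^\m\ot_R X$, allowing one to move the functor $\Hom(M,-)$ past a tensor factor.

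The decisive step is to promote the given \emph{abstract} isomorphism $\theta\colon M\ot_R N\to M\ot_R N'$ to one of the form $\mathrm{id}_M\ot f$ for an honest $R$-linear map $f\colon N\to N'$; once this is achieved, fact~(a) finishes the argument at once. Here I would exploit the progenerator structure: applying $M^\m\ot_R-$ (equivalently $\Hom(M,-)$) to $\theta$ and combining the trace pairing $M^\m\ot_R M\to R$ with fact~(b), one extracts from $\theta$ a canonical map $N\to N'$ whose image under $\mathrm{id}_M\ot-$ recovers $\theta$ up to these identifications. In the case most relevant to the applications of this paper, where $M$ is \emph{invertible}, i.e.\ $[M]\in\Pic(R)$, this step is immediate: tensoring $\theta$ with $M^\m$ and using $M^\m\ot_R M\simeq R$ gives directly
\[
N\simeq M^\m\ot_R(M\ot_R N)\xrightarrow{\ \sim\ }M^\m\ot_R(M\ot_R N')\simeq N',
\]
so that no faithful-flatness argument is even needed.

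I expect the promotion step to be the main obstacle, and it is exactly here that faithfulness of $M$ is essential. Localization alone does not suffice: it shows that $N_\mfp$ and $N'_\mfp$ are locally isomorphic with matching ranks, but finitely generated projective modules that are locally isomorphic need not be globally isomorphic, so the global isomorphism must be built from $\theta$ itself rather than from rank data. Keeping track of the $\End(M)$-module structure carried by $M\ot_R N$—equivalently, working inside the Morita context determined by the progenerator $M$—is what upgrades $\theta$ to a map of the required form and lets fact~(a) conclude.
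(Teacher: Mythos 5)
You cannot really be compared against the paper's argument here, because the paper offers none: for this lemma the authors explicitly refer to the literature (DeMeyer--Ingraham), and no proof is given. Judged on its own, your proposal breaks at precisely the step you flagged as the main obstacle --- promoting the abstract isomorphism $\theta\colon M\ot_RN\to M\ot_RN'$ to one of the form $\mathrm{id}_M\ot f$ --- and this gap cannot be repaired, because the statement under your reading ($M$ faithful f.g.p., $N,N'$ arbitrary) is false. Take $R=\mathbb{Z}[\sqrt{-5}]$, $M=R\oplus R$, $N=I=(2,1+\sqrt{-5})$ and $N'=R$. Then $M$ is free of rank two, hence a faithful progenerator and faithfully flat, and Steinitz's theorem gives $M\ot_RN\simeq I\oplus I\simeq R\oplus I^{2}\simeq R\oplus R\simeq M\ot_RN'$ as $R$-modules, since $I^{2}=(2)$ is principal; yet $N\not\simeq N'$, since $I$ is not principal. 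In particular no isomorphism $f\colon N\to N'$ exists at all, so no manipulation of $\theta$ can produce one. Both devices you suggest fail for identifiable reasons: tensoring $\theta$ with $M^{*}$ and splitting the trace pairing $M^{*}\ot_RM\simeq R\oplus K$ yields only $N\oplus(K\ot_RN)\simeq N'\oplus(K\ot_RN')$, and direct summands cannot be cancelled; and the Morita context does not apply because an $R$-linear isomorphism $M\ot_RN\simeq M\ot_RN'$ need not be $\End_R(M)$-linear once $M$ has rank $>1$ --- in the example, $I\oplus I\simeq R\oplus R$ as $R$-modules but \emph{not} as $M_2(R)$-modules, which is exactly the discrepancy Morita theory cannot see past.

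What survives of your proposal is the invertible case $[M]\in\Pic(R)$, which is correct and immediate, as you say. But contrary to your remark, that is not the case the paper needs: its one application of this lemma is the injectivity step of Theorem~\ref{isopic}, with $M=R$ viewed as an $R^\al$-module, faithful f.g.p.\ of rank $>1$ in general, and there the lemma is genuinely unavailable. Classes of $\Pic_{R^\al}(R^\al)$ can capitulate in $R$: for a global Galois extension take $R^\al=\mathbb{Z}[\sqrt{-5}]$, $R$ the ring of integers of its Hilbert class field, and $P_0=I$ above; then $P_0\ot_{R^\al}R\simeq R$ while $P_0\not\simeq R^\al$, so the pair $(M,N,N')=(R,P_0,R^\al)$ violates the lemma in exactly the configuration the paper uses it. Indeed, the lemma as transcribed would force $[P_0]\mapsto[P_0\ot_{R^\al}R]$ to be injective on $\Pic_{R^\al}(R^\al)$, contradicting exactness of \eqref{Intro:Exact} at that spot, where the kernel is $H^1(G,\al,R)$. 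The conclusion is that the hypotheses in the cited source must be stronger than those transcribed here --- in the Morita form one assumes the isomorphism $M\ot_RN\simeq M\ot_RN'$ is $\End_R(M)$-linear, and then the inverse equivalence finishes the proof --- and your instinct that the promotion step is the true obstruction, rather than a technicality, was exactly right.
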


\subsection{The group $\p(S/R)$}\label{sec:Group P}
\label{sec: pSR}

Let $R\subseteq S$ be an extension of non-necessarily commutative rings.  Following \cite{el2010invertible}, let $\mathcal{M}(S/R)$ be  the category with objects denoted by   $\xymatrix@C=1.2cm{ P\ar@{=>}[r]|-{[\phi]}& X}$, where $P$ is an  $R$-bimodule, $X$ is an  $S$-bimodule and $\phi:P\longrightarrow X$ is an $R$-bilinear map such that the maps  
\begin{equation*}
	\begin{tabular}{c c c}
		${\begin{array}{c c c l}
				\bar{\phi_r}: & P\otimes_RS & \longrightarrow & X ,\\
				& p\otimes_Rs & \longrightarrow & \phi(p)s \end{array}}$ & \ \ \mbox{and} \ \ & \ \ \ 	${\begin{array}{c c c l}
				\bar{\phi_l}:&	S\otimes_RP & \longrightarrow & X ,\\
				&	s\otimes_Rp & \longrightarrow & s\phi(p) \end{array}} $
	\end{tabular} \label{phirephil}
\end{equation*}
are isomorphisms of $R$-$S$-bimodules and $S$-$R$-bimodules, respectively.  {A  morphism   from $\xymatrix@C=1.2cm{ P\ar@{=>}[r]|{[\phi]}& X}$ to $\xymatrix@C=1.2cm{ Q\ar@{=>}[r]|{[\psi]}& Y}$ in $\mathcal{M}(S/R)$ is a pair $(\al, \beta),$} where $\al:P\longrightarrow Q$ is $R$-bilinear, $\beta:X\longrightarrow Y$ is $S$-bilinear and the  diagram 
$$\xymatrix{P\ar[rr]^{\phi}\ar[d]_{\al} & &  X\ar[d]^{\beta}\\
	Q\ar[rr]_{\psi} & &   Y}$$
is commutative. If a morphism $(\al, \beta)$ is such that  $\al$ and $\beta$ are isomorphisms of $R$-bimodules and $S$-bimodules, respectively, then  $(\al, \beta)$ is an  isomorphism between  $\xymatrix@C=1.2cm{ P\ar@{=>}[r]|{[\phi]}& X}$ and $\xymatrix@C=1.2cm{ Q\ar@{=>}[r]|{[\psi]}& Y.}$ We shall denote by $\xymatrix@C=1.2cm{ [P]\ar@{=>}[r]|{[\phi]}& [X]}$ the isomophism class of an object $\xymatrix@C=1.2cm{ P\ar@{=>}[r]|{[\phi]}& X}$  in $\mathcal{M}(S/R).$

Let  $\p(S/R)$  be the set of the isomorphism classes  $\xymatrix@C=1.2cm{ [P]\ar@{=>}[r]|{[\phi]}& [X]}$,  where $[P]\in \Pic(R)$ and $[X]\in \Pic(S)$. 


\begin{obs}\label{obsphirouphil} {By} \cite[Lemma 3.1]{yoichi1971galois}, { if}  $[P]\in \Pic(R)$ and $[X]\in \Pic(S)$, then $\xymatrix@C=1.2cm{ [P]\ar@{=>}[r]|{[\phi]}& [X]}\in \p(S/R),$ provided that  $\bar{\phi}_l$ (or $\bar{\phi}_r$) is an isomorphism.
\end{obs}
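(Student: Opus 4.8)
The object $\xymatrix@C=1.2cm{ P\ar@{=>}[r]|{[\phi]}& X}$ lies in $\p(S/R)$ precisely when it is an object of $\mathcal{M}(S/R)$, that is, when \emph{both} $\bar\phi_r$ and $\bar\phi_l$ are isomorphisms. Since by hypothesis one of them already is, the entire content is the implication ``one of $\bar\phi_r,\bar\phi_l$ is bijective $\Rightarrow$ the other is bijective''. By the left--right symmetry of the situation (passing to the opposite rings $R^{\op}\subseteq S^{\op}$ interchanges the two maps), it suffices to treat one case; the plan is to assume that $\bar\phi_r\colon P\ot_R S\to X$ is an isomorphism of $R$-$S$-bimodules and to deduce that $\bar\phi_l\colon S\ot_R P\to X$ is an isomorphism of $S$-$R$-bimodules. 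This is exactly \cite[Lemma 3.1]{yoichi1971galois}, so the Remark is its application; below I indicate how I would organize the argument.

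First I would record that $\phi$ is injective. Indeed $\phi=\bar\phi_r\circ\iota$, where $\iota\colon P\to P\ot_R S$, $p\mapsto p\ot 1$, is induced by $R\hookrightarrow S$; since $[P]\in\Pic(R)$ the bimodule $P$ is finitely generated projective, hence flat, as a right $R$-module, so $\iota$ is injective and therefore so is $\phi$. Thus $\phi(P)\simeq P$ is an invertible $R$-subbimodule of $X$, and the hypothesis that $\bar\phi_r$ is bijective reads $X=\phi(P)\,S$ (the right $S$-span), the multiplication map $\phi(P)\ot_R S\to X$ being an isomorphism.

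Next, using that $\bar\phi_r$ is already right $S$-linear, I would transport the $S$-bimodule structure of $X$ along $\bar\phi_r$ to $P\ot_R S$: the transported right $S$-action coincides with the natural one, and the left $R$-action extends to a left $S$-action, so $P\ot_R S$ becomes an invertible $S$-bimodule with $[P\ot_R S]=[X]$ in $\Pic(S)$. Because $P$ is invertible over $R$, the functor $-\ot_R P^{-1}$ is a self-equivalence of the category of $S$-$R$-bimodules and hence reflects isomorphisms; applying it to $\bar\phi_l$ and simplifying $(S\ot_R P)\ot_R P^{-1}\simeq S$ reduces the problem to showing that the resulting $S$-$R$-bimodule map $S\to X\ot_R P^{-1}$, $s\mapsto\sum_\alpha s\,\phi(e_\alpha)\ot e^\alpha$, is bijective, where $\{e_\alpha,e^\alpha\}$ is a dual basis of $P$ as a right $R$-module and $\sum_\alpha e_\alpha\ot e^\alpha$ is the associated coevaluation element.

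The main obstacle is precisely this last left--right ``flip'': deducing the \emph{left} spanning property $X=S\,\phi(P)$ (surjectivity of $\bar\phi_l$) and the matching injectivity from the \emph{right} spanning property $X=\phi(P)\,S$. This is where the invertibility of $X$ as an $S$-bimodule is essential: I would use that $X$ is a progenerator on both sides, together with the isomorphisms $X\ot_S X^{-1}\simeq S\simeq X^{-1}\ot_S X$, to build a two-sided inverse of $\bar\phi_l$ out of $\bar\phi_r^{-1}$ and the evaluation maps of $X$, exactly as in \cite[Lemma 3.1]{yoichi1971galois}. Once $\bar\phi_l$ is known to be bijective, both defining isomorphisms of $\mathcal{M}(S/R)$ hold and the object belongs to $\p(S/R)$.
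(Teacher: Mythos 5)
Your proposal is correct and follows essentially the same route as the paper: the paper offers no independent argument for this Remark, justifying it entirely by the citation of Miyashita's Lemma 3.1, and your write-up likewise delegates the essential left--right flip (one of $\bar{\phi}_r,\bar{\phi}_l$ invertible implies the other is) to that same lemma. The extra scaffolding you supply --- injectivity of $\phi$ via flatness of $P$ as a right $R$-module, transport of the $S$-bimodule structure along $\bar{\phi}_r$, and the reduction by tensoring with $P^{-1}$ --- is sound, but it functions as an expanded reading of the citation rather than a genuinely different proof.
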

%

It follows from \cite[Theorem 1.3]{miyashita1973exact}, that $\p(S/R)$ is a group, in which the product of the isomorphism class of  $\xymatrix@C=1.2cm{ P\ar@{=>}[r]|{[\phi]}& X} \in \mathcal{M}(S/R)$  and that of   $\xymatrix@C=1.2cm{ Q\ar@{=>}[r]|{[\psi]}& Y} \in \mathcal{M}(S/R)$ is given by the isomorphism class of 
	$$\xymatrix@C=1.9cm{ P\ot_RQ\ar@{=>}[r]|-{[\phi\ot \psi]} & X\ot_SY}\in \mathcal{M}(S/R),$$
	 where 	$\phi\ot\psi: P\ot_RQ \longrightarrow  X\ot_S Y$ is defined by  	$(\phi\ot\psi)(p\ot q)=\phi(p)\ot\psi(q).$
	Moreover, the identity element is the isomorphism class   $[R]=[\iota]\R[S]$ 
of  $R=[\iota]\R S,$ and the inverse of the class of  $\xymatrix@C=1.2cm{ P\ar@{=>}[r]|{[\phi]}& X}$ is  
	$\xymatrix@C=1.2cm{ [P^*]\ar@{=>}[r]|{[\phi^*]}& [X^*]},$ where $\phi^*(f)(s\phi(p))=sf(p),$ with $s\in S,$  $p \in P$ and $f\in P^*$.

If $R$ is a $k$-algebra   over a commutative ring $k,$ we define   
\begin{equation}\label{pk}\p_k(S/R)=\{\xymatrix@C=1.2cm{ [P]\ar@{=>}[r]|{[\phi]} & [X]} \in \p(S/R):  [P]\in \Pic_k(R)\}.\end{equation}
It is easy to see that  $\p_k(S/R)$ is a subgroup of  $\p(S/R),$ see \cite[page 9]{DoRo}.

\begin{pro}\label{morfismoentrePicePSR} Let $k\subseteq R\subseteq S$ be a tower  of rings with the same unity. Suppose that   $R$ is commutative and  that $k \subseteq  \Z(S).$  Then the map
	$$\begin{array}{c c c l}
	\xi: & \Pic_k(k) & \longrightarrow & \p_R(S/R)\\
	& [P_0] & \longmapsto & ([P_0\ot_kR]=[\phi]\R[P_0\ot_kS]),
	\end{array}$$
where  $\phi:P_0\ot_kR\longrightarrow P_0\ot_kS$ is the inclusion map, is 	a well-defined group homomorphism.

\end{pro}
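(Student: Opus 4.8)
The plan is to treat the three required properties separately: that $\xi([P_0])$ really is an object of $\p_R(S/R)$, that the construction is independent of the representative $P_0$, and that $\xi$ is multiplicative and preserves the identity. First I would settle membership in $\p_R(S/R)$. Since $R$ is commutative we have $k\subseteq\Z(R)$, so Lemma~\ref{PotSemPicS} applied to the extension $k\subseteq R$ yields $[P_0\ot_k R]\in\Pic(R)$, while the same lemma applied to $k\subseteq S$ (using the hypothesis $k\subseteq\Z(S)$) yields $[P_0\ot_k S]\in\Pic(S)$. Because $R$ is commutative, $P_0\ot_k R$ is central over $R$: for $r'\in R$ one has $r'(p\ot r)=p\ot r'r=p\ot rr'=(p\ot r)r'$, so by the characterization of $\Pic_R(R)$ recalled for commutative $R$ we get $[P_0\ot_k R]\in\Pic_R(R)$. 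It then remains to see that $\xymatrix{P_0\ot_k R \ar@{=>}[r]|{[\phi]} & P_0\ot_k S}$ lies in $\mathcal{M}(S/R)$; by Remark~\ref{obsphirouphil} it is enough to exhibit one of $\bar\phi_r,\bar\phi_l$ as an isomorphism. I would check that $\bar\phi_r\colon (P_0\ot_k R)\ot_R S\to P_0\ot_k S$, sending $(p\ot r)\ot s\mapsto p\ot rs$, is exactly the canonical isomorphism $(P_0\ot_k R)\ot_R S\simeq P_0\ot_k(R\ot_R S)\simeq P_0\ot_k S$, which places $\xi([P_0])$ in $\p_R(S/R)$. Independence of the representative is then immediate, since an isomorphism $P_0\simeq P_0'$ of central $k$-modules induces, after applying $-\ot_k R$ and $-\ot_k S$, a compatible pair of bimodule isomorphisms forming an isomorphism in $\mathcal{M}(S/R)$ between $\xi([P_0])$ and $\xi([P_0'])$.

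Next I would verify multiplicativity. For $[P_0],[P_0']\in\Pic_k(k)$ the product $\xi([P_0])\xi([P_0'])$ is represented by $\xymatrix{(P_0\ot_k R)\ot_R(P_0'\ot_k R)\ar@{=>}[r]|{[\phi\ot\phi']} & (P_0\ot_k S)\ot_S(P_0'\ot_k S)}$. The key input is the monoidality of base change: there are canonical isomorphisms $(P_0\ot_k R)\ot_R(P_0'\ot_k R)\simeq(P_0\ot_k P_0')\ot_k R$ of $R$-bimodules and $(P_0\ot_k S)\ot_S(P_0'\ot_k S)\simeq(P_0\ot_k P_0')\ot_k S$ of $S$-bimodules, given on generators by $(p\ot r)\ot(p'\ot s)\mapsto(p\ot p')\ot rs$ and $(p\ot s)\ot(p'\ot t)\mapsto(p\ot p')\ot st$. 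Checking that these are balanced over $\ot_R$ and $\ot_S$, are $R$- resp. $S$-bilinear, and are bijective is routine, and one sees directly that they intertwine $\phi\ot\phi'$ with the inclusion $(P_0\ot_k P_0')\ot_k R\hookrightarrow(P_0\ot_k P_0')\ot_k S$. Hence the product class equals $\xi([P_0\ot_k P_0'])=\xi([P_0][P_0'])$. For the identity, the isomorphisms $k\ot_k R\simeq R$ and $k\ot_k S\simeq S$ carry $\phi$ to the inclusion $\iota\colon R\hookrightarrow S$, so $\xi([k])$ is the identity class $[R]=[\iota]\R[S]$ of $\p_R(S/R)$.

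I expect the main obstacle to be bookkeeping rather than any genuine difficulty: one must track the left and right $R$- and $S$-module structures on the various tensor products carefully, and confirm that each canonical isomorphism is compatible with the relevant structure map $\phi$, so that the pairs it produces are honest morphisms in $\mathcal{M}(S/R)$ and not merely module isomorphisms. In particular, the delicate point is that the comparison maps in the multiplicativity step are well defined over $\ot_R$ and $\ot_S$ (i.e. respect the balancing relations) and are $S$-bilinear; once the monoidality of base change is set up at the level of $\mathcal{M}(S/R)$, both the homomorphism property and the preservation of the identity follow formally.
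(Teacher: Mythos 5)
Your proposal is correct and follows essentially the same route as the paper's proof: Lemma~\ref{PotSemPicS} together with Remark~\ref{obsphirouphil} for membership in $\p_R(S/R)$, a commutative diagram induced by a $k$-bimodule isomorphism $P_0\simeq P_0'$ for independence of the representative, and the canonical base-change isomorphisms (the paper's \eqref{eq:easyBimodMap}) for multiplicativity. The only real difference is that you verify $\bar{\phi}_r$, identifying it with the associativity isomorphism $(P_0\ot_kR)\ot_RS\simeq P_0\ot_k(R\ot_RS)\simeq P_0\ot_kS$, whereas the paper instead writes down an explicit inverse of $\bar{\phi}_l$ — which is precisely where it invokes the centrality of $P_0$ over $k$ and the hypothesis $k\subseteq\Z(S)$; in your variant that hypothesis does its work through Lemma~\ref{PotSemPicS} (to get $[P_0\ot_kS]\in\Pic(S)$) and through the well-definedness over $\ot_S$ of the comparison map in the multiplicativity step, exactly the point you flag as delicate.
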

\begin{dem}
	Let  $[P_0]  \in \Pic_k(k).$ It follows from   Lemma \ref{PotSemPicS} that  $[P_0\ot_kR]\in \Pic(R)$ and $[P_0\ot_kS]\in \Pic(S)$. Consider the $R$-bimodule map
	$$
	\bar{\phi}_l:  S\ot_R(P_0\ot_kR) \ni   s\ot_Rp_0\ot_kr \mapsto p_0\ot_ksr \in  P_0\ot_kS.$$
We shall prove that $\bar{\phi}_l$ is invertible with inverse given by
$$ P_0\ot_kS\ni p_0\ot_k s\mapsto s\ot_R p_0\ot_k 1\in S\ot_R P_0\ot_k R.$$
	Indeed,  to check that  $\bar{\phi}_l^{-1}$ is well defined take  $\mu \in k$. Since  $P_0$ is a central $k$-bimodule and   $k\subseteq \Z(S)$ we have that 
	\begin{eqnarray*}
		\bar{\phi}_l^{-1}(p_0\mu, s)  =  s\ot_R p_0\mu\ot_k 1=s\mu\ot_R p_0\ot _k1
		=  \mu s\ot_R p_0\ot_k 1=\bar{\phi}_l^{-1}(p_0,\mu s).
	\end{eqnarray*}
	which shows that, $\bar{\phi}_l^{-1}$ is well-defined. On the other hand, notice that
	\begin{eqnarray*}
		(\bar{\phi}_l^{-1}\circ\bar{\phi}_l)(s\ot_Rp_0\ot_kr)& = & \bar{\phi}_l^{-1}(p_0\ot_k sr) = sr\ot_Rp_0\ot_k1 \\
		& = & s\ot_Rr(p_0\ot_k1)=s\ot_Rp_0\ot_kr,
	\end{eqnarray*}
	and
$
		(\bar{\phi}_l\circ\bar{\phi}_l^{-1})(p_0\ot_ks) = \bar{\phi}_l(s\ot_R p_0 \ot_k 1)=p_0\ot_ks. 
$
	Then it follows by   Remark \ref{obsphirouphil}  that $([P_0\ot_kR]=[\phi]\R[P_0\ot_kS])\in \p_R(S/R)$.  To ensure that  $\xi$ is well-defined we take a central $k$-bimodule $P'_0$ such that $[P_0]=[P_0']$ in $\Pic_k(k)$, hence there exists a   $k$-bimodule isomorphism  $f:P_0\longrightarrow P_0',$ and it is easy to see that the diagram below commutes 
	$$\xymatrix{ P_0\ot_kR\ar[dd]_{f\ot R}\ar[rr]^{\phi} & & P_0\ot_kS\ar[dd]^{f\ot S}\\
		& & \\
		P_0'\ot_kR\ar[rr]_{\phi'} & & P_0'\ot_kS   }$$
	where  $\phi$ and $\phi'$  are inclusions, this shows that $([P_0\ot_kR]=[\phi]\R[P_0\ot_kS])=([P'_0\ot_kR]=[\phi']\R[P'_0\ot_kS])$ in  $\p_R(S/R),$ and $\xi$ is well-defined.  To check that it is a group homomorphism take  $[P_0],[P'_0]\in \Pic_k(k),$  then 
	$$\xi([P_0\ot_kP_0'])=([P_0\ot_kP_0'\ot_kR]=[\psi]\R[P_0\ot_kP'_0\ot_kS]),$$
	where  $\psi$ if the inclusion,  moreover
	$$\xi([P_0])\xi([P_0'])=([P_0\ot_kR\ot_RP_0'\ot_kR]=  [\phi \otimes \phi']\R[P_0\ot_kS\ot_SP_0'\ot_kS]),$$
	and the following diagram commutes
	$$\xymatrix{ P_0\ot_kR\ot_RP_0'\ot_kR\ar[rr]^{  {\phi \otimes \phi'}   }\ar[dd]_{\simeq}  & &   P_0\ot_kS\ot_SP_0'\ot_kS\ar[dd]^{\simeq}\\
		& & \\
		P_0\ot_kP_0'\ot_kR\ar[rr]_{\psi} & &  P_0\ot_kP'_0\ot_kS,}$$
	 where the vertical isomorphisms are obtained using \eqref{eq:easyBimodMap}. This yields that  $\xi([P_0\ot_kP_0'])=\xi([P_0])\xi([P_0'])$ in $\p_R(S/R),$ proving that   $\xi$ is a homomorphism of groups. \end{dem}


\section{The  partial generalized crossed product}

\subsection{Partial actions and partial representations}\label{subsec:ParAcParRep}

\begin{defi}
	Let $G$ be a group and $S$  be a semigroup (respectively, a ring). A partial action $\alpha $ of $G$ on $S$ is a family of subsemigroups (respectively, two-sided ideals) $S_g, g\in G$, and  semigroup (respectively, ring) isomorphisms  $\al_g:S_{g^\m}\to S_g,$ which satisfy the following  conditions, for all $g,h\in G$:
	\begin{enumerate}
		\item [$(i)$] $S_1=S$ and $\al_1=Id_S$,
		\item [$(ii)$] $\al_h^\m(S_h\cap S_{g^\m})\subseteq S_{(gh)^\m}$,
		\item [$(iii)$] $\al_g\circ \al_h(s)=\al_{gh}(s)$, for every $s \in \al_h^\m(S_h\cap S_{g^\m}).$
	\end{enumerate} 
\end{defi}
We shall write  $\al=(S_g,\al_g)_{g\in G}$ for a partial action of  $G$ on a semigroup (or a ring) $S.$ 
As seen in   \cite{dokuchaev2005associativity}, conditions   (ii) and (iii) of the above definition imply   that $\al_g^\m=\al_{g^\m}$ and  
\begin{equation}
	\al_g(S_{g^\m}\cap S_{h})=S_g\cap S_{gh}, \ \ \mbox{for all} \ g,h \in G. \label{alxemSxinversoSy}
\end{equation}
{We say that    $\al=(S_g,\al_g)_{g\in G}$ is {\it unital} if each   $S_g$ is an ideal in $S$ generated by an   idempotent which is central in $S$, that is, $S_g=S1_g$,  with $1_g\in \Z (S),$ for all $g \in G.$ In this case, it is clear that   $S_g\cap S_h=S1_g1_h$ and (\ref{alxemSxinversoSy}) implies that  $\al_g(1_h1_{g^\m})=1_g1_{gh},$ for all   $g,h \in G.$ As a consequence, 

$
\al_{gh}(s1_{h^\m g^\m})1_g=\al_g(\al_h(s1_{h^\m})1_{g^\m}), \ \ \mbox{for each } \ g,h \in G \ \mbox{and any} \ s \in S.
$

Let $R$ be a ring. We recall from  \cite[Definition 1.2]{dokuchaev2005associativity}  that
the {\it  partial skew group ring}  $R \star_{\alpha}G$ for the unital partial action  $\alpha$ of $G$ on $R$ is the direct sum $\bigoplus\limits_{g\in G}R_g\delta_g$,
in which the $\delta_g$'s are symbols, with the multiplication defined by the $R$-bilinear extension of the rule:
$$(r_g\delta_g)  (r'_h\delta_h) = r_g\alpha_g(r'_h1_{g\m})\delta_{gh},$$ for all $g,h\in G$, $r_g\in R_g$ and $r_h\in R_h$.  It follows from  \cite[Corollary 3.2]{dokuchaev2005associativity} that $R \star_{\alpha}G$ is an associative ring with  identity $1_R\delta_1,$ moreover one can view $R$  as a subring of $R \star_{\alpha}G$ via the ring monomorphism $R\ni s\mapsto s\delta_e\in R \star_{\alpha}G.$

According to
\cite{dokuchaev2007partial},  the subring $R^{\alpha}:=\{r \in R :\alpha_g(r1_{g^{-1}})=r1_g,\,\text{for all}\,\, g\in G\}$ of a ring $R$ is called the {\it ring  of invariants} of$R$ under the unital partial action $\al,$ moreover
$R$ is an  $\al$-\textit{partial Galois extension} of $R^\alpha$ if  there exist
 $m\in \mathbb{N}$ and elements $x_i,y_i\in R, 1\leq i\leq m$, such that
\begin{equation*}\label{G2}
\sum_{i=1}^mx_i\alpha_{g}(y_i1_{g^{-1}})=\delta_{1, g},\, \text{for each}\, g \in G.
\end{equation*}
The elements $x_i,y_i$  are called \textit{partial Galois coordinates} of $R$ over $R^\alpha$. For a unital partial  action  on a semigroup  the subsemigroup of invariants is defined similarly. \\

\begin{obs}\label{jjota}
It is shown in \cite[Theorem 4.1]{dokuchaev2007partial} that  if $R\supseteq R^\al$ is a partial Galois extension such that $ R^\al \subseteq \Z (R),$  then  $R$ is a f.g.p. $R^\al$-module. Moreover, by  the same theorem it  follows   that for every left $R\star_\al G$-module $M$ the map 
\begin{equation}\label{isop}\omega\colon R\ot_{R^\al} M^G\ni x\ot_{R^\al} m\mapsto  xm\in M
\end{equation}
is a left $R\star_\al G$-module isomorphism, where  
\begin{equation}\label{mg} M^G=\{m\in M\,|\, (1_g\delta_g)m=1_gm, \,\forall  g\in G\},\end{equation} 
 in particular  $R\ot_{R^\al} M^G$ and $M$ are isomorphic as left $R$-modules.

\end{obs}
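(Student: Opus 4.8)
The statement is quoted from \cite[Theorem 4.1]{dokuchaev2007partial}, but I would give a self-contained argument built entirely from the trace map and the partial Galois coordinates $x_i,y_i$ ($1\le i\le m$). The plan is to first introduce the trace $t\colon R\to R$, $t(r)=\sum_{g\in G}\al_g(r1_{g^\m})$, and to verify that it takes values in $R^\al$ and is $R^\al$-linear. Invariance of $t(r)$ uses the relation $\al_h(1_h\,1_{g^\m})=1_h1_{hg}$ displayed before Remark~\ref{jjota} together with axiom $(iii)$ of a partial action (to reindex $\al_h\circ\al_g$ as $\al_{hg}$); $R^\al$-linearity uses that $\al_g(a1_{g^\m})=a1_g$ for $a\in R^\al$ and that $R^\al\subseteq\Z(R)$.

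For the projectivity of $R$ over $R^\al$, I set $\phi_i(r)=t(y_ir)\in R^\al$. Since $\al_g$ is multiplicative on $R_{g^\m}$ and $1_{g^\m}$ is idempotent, $\al_g(y_ir1_{g^\m})=\al_g(y_i1_{g^\m})\al_g(r1_{g^\m})$, so that
\[
\sum_i x_i\phi_i(r)=\sum_{g}\Big(\sum_i x_i\al_g(y_i1_{g^\m})\Big)\al_g(r1_{g^\m})=\sum_g \delta_{1,g}\,\al_g(r1_{g^\m})=r,
\]
by the partial Galois condition. Thus $\{(x_i,\phi_i)\}_{i=1}^m$ is a dual basis, proving that $R$ is a f.g.p. $R^\al$-module.

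For the isomorphism $\omega$, the plan is to exhibit an explicit inverse. I would first check that the averaging map $E(m)=\sum_{g\in G}(1_g\delta_g)m$ sends $M$ into $M^G$: from $(1_h\delta_h)(1_g\delta_g)=1_h(1_{hg}\delta_{hg})$ and reindexing $g\mapsto hg$ one gets $(1_h\delta_h)E(m)=1_hE(m)$. Hence for $m\in M$ the elements $m_i:=E(y_im)=\sum_g(\al_g(y_i1_{g^\m})\delta_g)m$ lie in $M^G$, and I define $\omega'(m)=\sum_i x_i\ot m_i$. Pulling each $x_i$ inside and applying the Galois condition gives $\omega(\omega'(m))=\big(\sum_g(\sum_i x_i\al_g(y_i1_{g^\m}))\delta_g\big)m=m$, so $\omega$ is surjective. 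For the other composite, when $m\in M^G$ the defining property $(1_g\delta_g)m=1_gm$ collapses $E(y_ixm)$ to $t(y_ix)m$ with $t(y_ix)\in R^\al$; moving this scalar across $\ot_{R^\al}$ and invoking the dual basis identity $\sum_i x_it(y_ix)=x$ from the previous step yields $\omega'(\omega(x\ot m))=x\ot m$. Hence $\omega$ is bijective.

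It then remains to record the $R\star_\al G$-module structure on $R\ot_{R^\al}M^G$ and to check that $\omega$ respects it; the choice forced by $R$-linearity and by the requirement that $\omega$ be a morphism is $(r_g\delta_g)\cdot(x\ot m)=r_g\al_g(x1_{g^\m})\ot m$. The main obstacle is not any single identity but the systematic bookkeeping with the central idempotents $1_g$: one must repeatedly use $\al_g(a1_{g^\m})=a1_g$ for $a\in R^\al$, the relation $\al_g(1_h1_{g^\m})=1_g1_{gh}$, and the multiplicativity of $\al_g$ to confirm both that this formula defines an associative action and that $\omega$ intertwines it. Once bijectivity and $R\star_\al G$-linearity are established, the concluding assertion that $R\ot_{R^\al}M^G\simeq M$ as left $R$-modules follows at once by restricting scalars along $R\hookrightarrow R\star_\al G$.
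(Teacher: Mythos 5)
Your proposal is correct: the trace map $t(r)=\sum_{g\in G}\al_g(r1_{g^\m})$, the dual basis $\{(x_i,\,t(y_i\,\cdot))\}$ built from the partial Galois coordinates, and the averaging operator $E(m)=\sum_{g\in G}(1_g\delta_g)m$ landing in $M^G$ are precisely the ingredients of the proof of \cite[Theorem 4.1]{dokuchaev2007partial}, which is the only justification the paper itself gives for this remark. Since the paper offers no independent argument beyond that citation, your self-contained reconstruction follows essentially the same (standard) approach, and the identities you single out for the remaining bookkeeping, namely $\al_g(a1_{g^\m})=a1_g$ for $a\in R^{\al}$, $\al_g(1_h1_{g^\m})=1_g1_{gh}$, and $\al_g(\al_h(s1_{h^\m})1_{g^\m})=\al_{gh}(s1_{(gh)^\m})1_g$, are exactly the ones needed to make the $R\star_\al G$-structure on $R\ot_{R^\al}M^G$ well defined and $\omega$ equivariant.
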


		\begin{defi}\label{defipartialrepresentation} A partial  representation of $G$ in a monoid $S$ is a map $\et_G\ni g\mapsto \et_g\in S$
	which satisfies the following   properties for all $g,h\in G$:
	\begin{itemize}
		\item [$(i)$] $\et_{1_G}=1_S$,
		\item[$(ii)$]  $\et_g\et_h\et_{h^{-1}}=\et_{gh}\et_{h^{-1}},$ 
		\item [$(iii)$]  $\et_{g^{-1}}\et_g\et_h=\et_{g^{-1}}\et_{gh}$.
	\end{itemize}
\end{defi}
It follows from Definition \ref{defipartialrepresentation} that 
\begin{equation}\label{simeq}\et_g\et_{g^\m}\et_g=\et_g, \,\,g\in G.\end{equation} 
For $g \in G$ denote $\e_g=\et_g\et_{g^{-1}}$.  It follows by  \cite{dokuchaev2000partial} that the $\e_g$'s  are   idempotents such that  $ \e_g \e_h = \e_h \e_g \ \  \mbox{and} \ \
\et_g\e_h=\e_{gh}\et_g,$ for all $g,h\in G$. 
Furthermore,
\begin{equation}\label{prop1}\et_g\et_h=\et_g\et_{g^\m}\et_g\et_h=\et_g\et_{g^\m}\et_{gh}=\e_g\et_{gh}.\end{equation}

\label{sec:FactSet GenParCrossProd}
\label{sec: pcgp}

Let $R$ be a ring, we say that a partial representation   $$\begin{array}{c c c c}
\G: & G& \to & \Pics(R),\\
& g & \to & [\G_g],
\end{array}$$ is {\it unital},  if  there exists a central  idempotent $1_g\in R$ such that $
[\G_g][\G_{g^{-1}}]=[R1_g].$ Thus there is a family of $R$-bimodule isomorphisms
\begin{equation*}
f^\G=\{f_{g,h}^\G:\G_g\ot_R \G_h\to 1_g\G_{gh}, \ g,h\in G\}. 
\end{equation*}

\noindent Following \cite{DoPaPi2}, we say that $f^\T$ is a \textit{factor set} for $\G,$ if $f^\G$ satisfies the  following commutative diagram: 
\begin{equation*}
\xymatrix{ \G_g\ot_R \G_h\ot_R\G_l\ar[r]^{\G_g\ot f^\G_{h,l}}\ar[d]_{f^\G_{g,h}\ot \G_l} & \G_g\ot_R 1_h\G_{hl}\ar@{=}[r] & 1_{gh}\G_g\ot_R \G_{hl}\ar[d]^{f^\G_{g,hl}}\\
	1_g\G_{gh}\ot_R \G_l\ar[rr]_{f^\G_{gh,l}} & & 1_g1_{hl}\G_{ghl}  } \label{diagramaassociatividadeprodutocruzado}
\end{equation*}

Let $f^\G=\{f_{g,h}^{\G}:\G_g\ot_R\G_h\to 1_g\G_{gh},\ g,h \in G\}$ be a factor set for $\G .$ The set $\D(\G)=\bigoplus_{g\in G}\G_g$ with multiplication defined by
\begin{equation}\label{prog}
u_g\stackrel{\G}{\circ} u_h = f_{g,h}^\G(u_g\ot_R u_h)\in 1_g\G_{gh}, \ \ u_g\in \G_g,u_h\in \G_h,
\end{equation}
is called a \textit{partial generalized crossed product associated to the factor set $f^\G$} or simply a \textit{partial generalized crossed product}.

\begin{pro}\cite[Proposition 3.13]{DoRo}\label{pcgpeanelcomutativocom1} Let $\G:G\to \Pics(R)$ be a unital partial representation with $\G_g\ot_R\G_{g^\m}\simeq R1_g$, for all $g\in G$, and let $f^\G=\{f_{g,h}^\G:\G_g\ot_R\G_h\to 1_g\G_{gh}, \ g,h \in G\}$ be   a factor set for  $\G$. Then, the partial generalized crossed product  $\D(\G)$ is an associative ring with unity and $R\simeq \G_1$ is a subring of $\D(\G)$.
\end{pro}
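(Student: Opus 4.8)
The plan is to treat this as the partial analogue of the classical fact (Kanzaki, Miyashita) that a generalized crossed product is a ring. Two ring axioms are not immediate and must be verified: associativity of $\stackrel{\G}{\circ}$ and the existence of a two-sided identity; after that one identifies the component $\G_1$ with $R$ as a subring. Since every $f^\G_{g,h}$ is an $R$-bimodule morphism, the operation $\stackrel{\G}{\circ}$ defined in \eqref{prog} is biadditive and $R$-balanced, hence extends to a biadditive multiplication on $\D(\G)=\bigoplus_{g\in G}\G_g$; by distributivity it then suffices to check associativity on homogeneous elements $u_g\in\G_g$, $u_h\in\G_h$, $u_l\in\G_l$, and the unit axiom on each $\G_g$.

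For associativity I would evaluate both bracketings on $u_g\ot_R u_h\ot_R u_l$. By \eqref{prog}, $(u_g\stackrel{\G}{\circ}u_h)\stackrel{\G}{\circ}u_l=f^\G_{gh,l}\big(f^\G_{g,h}(u_g\ot_R u_h)\ot_R u_l\big)$ is the value at $u_g\ot_R u_h\ot_R u_l$ of the composite $f^\G_{gh,l}\circ(f^\G_{g,h}\ot\G_l)$ running along the left-bottom path of the factor-set diagram, while $u_g\stackrel{\G}{\circ}(u_h\stackrel{\G}{\circ}u_l)$ is the value of $f^\G_{g,hl}\circ(\G_g\ot f^\G_{h,l})$ along the top-right path. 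Both composites land in $1_g1_{hl}\G_{ghl}\subseteq\G_{ghl}$, and the commutativity of the diagram is precisely the equality of these two composites, so the bracketings coincide. The care needed here is the bookkeeping of the idempotents and of the canonical identification $\G_g\ot_R 1_h\G_{hl}=1_{gh}\G_g\ot_R\G_{hl}$ that sits on the top edge: one must confirm that the indicated maps are genuine $R$-bimodule morphisms with matching domains and codomains (in particular that $f^\G_{gh,l}$ restricted to $1_g\G_{gh}\ot_R\G_l$ indeed takes values in $1_g1_{hl}\G_{ghl}$), after which associativity is immediate.

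For the unit, I first note that the partial representation axioms force $[\G_1]$ to be the neutral element $[R]$ of the monoid $\Pics(R)$ and $1_1=1_R$; hence there is an $R$-bimodule isomorphism $\theta\colon\G_1\to R$. The factor-set diagram at $(g,h,l)=(1,1,1)$ shows $(\G_1,f^\G_{1,1})$ is associative, and since $f^\G_{1,1}$ is a bimodule isomorphism $\G_1\ot_R\G_1\to\G_1$, under $\theta$ it is carried to multiplication by a central unit of $R$; absorbing this unit into $\theta$, we may assume $f^\G_{1,1}$ is carried to the multiplication of $R$, so that $(\G_1,\stackrel{\G}{\circ})\simeq R$ as rings and $\G_1$, being closed under $\stackrel{\G}{\circ}$ (because $1_1\G_{1\cdot 1}=\G_1$), is a subring of $\D(\G)$ isomorphic to $R$. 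Let $e=\theta^\m(1_R)\in\G_1$. To see $e$ is a two-sided identity, I would specialize the diagram to $(1,1,g)$ and to $(g,1,1)$: identifying $\G_1$ with $R$ via $\theta$ and writing $f^\G_{1,g}$ and $f^\G_{g,1}$ as $a\ot u\mapsto a\,\psi_g(u)$ and $u\ot a\mapsto\chi_g(u)\,a$ for $R$-bimodule automorphisms $\psi_g,\chi_g$ of $\G_g$, these specializations yield $\psi_g^2=\psi_g$ and $\chi_g^2=\chi_g$; being automorphisms, $\psi_g=\chi_g=\mathrm{id}$, so $f^\G_{1,g}$ and $f^\G_{g,1}$ are the canonical structure isomorphisms $R\ot_R\G_g\simeq\G_g\simeq\G_g\ot_R R$. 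Consequently $e\stackrel{\G}{\circ}u_g=u_g=u_g\stackrel{\G}{\circ}e$ for every $g$ and every $u_g\in\G_g$, giving the identity of $\D(\G)$.

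I expect the main obstacle to be this last paragraph: pinning down $f^\G_{1,g}$ and $f^\G_{g,1}$ as the canonical module maps requires both the cocycle specializations and the observation that an idempotent bimodule endomorphism of the invertible bimodule $\G_g$ must be the identity, together with the legitimacy of normalizing $f^\G_{1,1}$ to the multiplication of $R$. By contrast, associativity is essentially a restatement of the defining diagram once the idempotent bookkeeping is settled, and the remaining ring axioms follow from the routine distributivity of $\stackrel{\G}{\circ}$.
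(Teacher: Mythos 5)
Note first that the paper itself contains no proof of this statement: it is quoted as \cite[Proposition 3.13]{DoRo}, so the only comparison available is with the standard argument such a proof must follow, and your proposal is essentially that argument. Your two main steps are sound: associativity on homogeneous elements is, after the idempotent bookkeeping you describe, literally the commutativity of the factor-set diagram and extends biadditively to $\D(\G)$; and the normalization of $f^\G_{1,1}$ is legitimate, since $[\G_1]=[R]$ (axiom $(i)$ of a partial representation) and $1_1=1_R$ (from $[R]=[R1_1]$, comparing annihilators), so any bimodule isomorphism $\theta:\G_1\to R$ transports $f^\G_{1,1}$ to $(a,b)\mapsto abc$ for a central unit $c$, and replacing $\theta$ by $c\theta$ makes it the multiplication of $R$.

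The one place where your wording needs repair is the claim that $\chi_g$ is an \emph{automorphism} of $\G_g$. By the definition of a factor set, $f^\G_{g,1}$ is an isomorphism onto $1_g\G_g$, so $\chi_g$ is a priori only an injective bimodule map $\G_g\to 1_g\G_g$, and at that stage you do not yet know that $1_g\G_g=\G_g$ (this issue does not arise for $\psi_g$, because $1_1=1_R$ makes $f^\G_{1,g}$ surjective onto $\G_g$, so $\psi_g$ really is bijective). The defect is harmless: the specialization of the diagram at $(g,1,1)$ gives $\chi_g(\chi_g(u))=\chi_g(u)$ for all $u\in\G_g$, and injectivity alone then yields $\chi_g=\mathrm{id}_{\G_g}$, which a posteriori also proves $1_g\G_g=\G_g$. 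Alternatively, the equality $\G_g\ot_R 1_h\G_{hl}=1_{gh}\G_g\ot_R\G_{hl}$ built into the paper's diagram, read at $h=l=1$, already forces $1_g\G_g=\G_g$, since $-\ot_R\G_1$ with $\G_1\simeq R$ reflects zero modules. With this one-line adjustment, your derivation that $e=\theta^\m(1_R)$ is a two-sided identity, and that $\G_1$, being closed under $\stackrel{\G}{\circ}$ with $(\G_1,\stackrel{\G}{\circ})\simeq R$ as rings, is a unital subring of $\D(\G)$, is complete and correct.
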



Let $\al=(D_g,\al_g)_{g \in G}$ be a unital partial action of  $G$ on a commutative ring $R$, where $D_g=R1_g$, and  $1_g$ is an idempotent for any  $g \in G$. 
Consider the map
\begin{equation}
\T: G \ni g \mapsto [ (D_g)_{g^\m}]  \in  \Pics_{R^\al}(R)\\
 \label{definicaodeT0}
\end{equation}
where   $ (D_g)_{g^\m}=D_g$ as sets, and  $R$-actions defined by:
\begin{equation*}
r*d=rd \ \ \mbox{and} \ \ d*r=d\al_g(r1_{g^\m}), \ \ \mbox{for all}  \ d \in D_g \ \mbox{and} \ r \in R.
\end{equation*}

By   \cite[Remark 6.4]{DoPaPi2}     and \cite[Proposition 6.2]{DoPaPi}    the map  $\T$ is  a unital partial representation with
\begin{equation}\label{isot} (D_g)_{g^\m}\ot_R(D_{g^\m})_g\simeq D_g,\end{equation} as $R$-bimodules.
Observe that
\begin{equation}
rd=r1_{g}d=\al_{g}(\al_{g^\m}(r1_{g}))d=d*\al_{g^\m}(r1_{g}),\label{reux}
\end{equation}
for all $g\in G, d\in D_g$ and $r\in R.$

The partial representation $\T$ will be very important for us to deal with partial generalized crossed products. Moreover, $\T $ is used to describe the 
unital partial action of $G$ on  $\Pics _R(R),$ as given in the following:

\begin{exe}\label{pacal}(A partial action on  $\Pics _R(R)$) Let   $g\in G$ and $P$ be a central $R$-bimodule with $1_{g^\m}p=p$,  for each  $p\in P$.  It follows from
  \cite[Lemma 3.6]{DoPaPi} that there is a central $R$-bimodule structure $\bullet$ on $P_g$, where  $P=P_g$ as sets, and  
	\begin{equation}\label{cena}
	r\bullet p=\al_{g^\m}(r1_{g})p, \ \ \mbox{for every} \ p\in P, r\in R.
	\end{equation}
	By  \cite[Theorem 3.8]{DoPaPi}, the family  $\al^*=(\mathcal{X}_g,\al_g^*)$,  where $\mathcal{X}_g=\Pics _R (R)[D_g]=\Pics _R (D_g)$ and 
	\begin{equation*}
	\begin{array}{c c c l}
	\al_g^*: & \mathcal{X}_{g^\m} & \longrightarrow & \mathcal{X}_{g}\\
	& [P] & \longmapsto & [P_g]
	\end{array}
	\end{equation*}
	for any $g\in G,$ is a partial action of  $G$ on $\Pics _R (R)$. Clearly, $\U (\mathcal{X} _g ) = \Pic (D_g),$ $g\in G.$ Moreover,  by equality (6.4) in \cite{DoPaPi}  and \cite[Remark 6.4]{DoPaPi2}  we obtain an $R$-bimodule isomorphism
	\begin{equation}\label{cong}P_g\simeq (D_g)_{g^\m}\ot_R P\ot_R(D_{g^\m})_g,\end{equation}
	for all $[P]\in  \mathcal{X}_{g^\m},$ and $g \in G,$ where $[(D_g)_{g^\m}]=\T (g)$.  Explicitly, the isomorphism is obtained by
\begin{equation}\label{isocon}
P_g\ni p\mapsto 1_g\otimes_R p\otimes_R 1_{g^{\m}}\in (D_g)_{g^\m}\ot_R P\ot_R(D_{g^\m})_g  
\end{equation}
with inverse
\begin{equation}\label{isoconi}
 (D_g)_{g^\m}\ot_R P\ot_R(D_{g^\m})_g\ni d_g\otimes_R p\otimes_R d'_{g^{\m}}   \mapsto \alpha _{g^\m}(d_g) p d'_{g^{\m}} \in P_g.
\end{equation}

	Consequently, the partial action $\alpha ^*$ coincides with the partial action constructed in \cite[p.19]{DoRo}. 
	\end{exe}
	
	We proceed with the next:
	
\begin{lem}\label{fs} Let $\al=(D_g,\al_g)_{g \in G}$ be a unital partial action of  $G$ on a commutative ring $R$, as above. Then the family of isomorphisms  $f^\T=\{ f^\T_{g,h}:(D_g)_{g^\m}\ot_R(D_h)_{h^\m}\longrightarrow 1_g(D_{gh})_{(gh)^\m}\}_{(g,h) \in G\times G  }$ defined by 
	\begin{equation}\label{fte}
	f^\T_{g,h}: (D_g)_{g^\m}\ot_R(D_h)_{h^\m}\ni u_g \ot_R u_h   \mapsto u_g\al_g(u_h1_{g^\m}) \in  1_g(D_{gh})_{(gh)^\m}\end{equation}
	is a factor set for the unital partial representation $\T$ defined by (\ref{definicaodeT0}). 
\end{lem}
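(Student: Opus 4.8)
The plan is to verify in turn that each $f^\T_{g,h}$ is a well-defined $R$-bimodule isomorphism onto $1_g(D_{gh})_{(gh)^\m}$, and then that the associativity diagram defining a factor set commutes; the whole argument reduces to the defining identities of a unital partial action. For well-definedness I would first check that the rule $u_g\ot_R u_h\mapsto u_g\al_g(u_h1_{g^\m})$ is $R$-balanced. Writing the right action on $(D_g)_{g^\m}$ as $u_g*r=u_g\al_g(r1_{g^\m})$ and the left action on $(D_h)_{h^\m}$ as $r*u_h=ru_h$, a short computation using that $\al_g$ is multiplicative on $D_{g^\m}$, that $R$ is commutative and that $1_{g^\m}$ is idempotent gives $f^\T_{g,h}((u_g*r)\ot u_h)=u_g\al_g(ru_h1_{g^\m})=f^\T_{g,h}(u_g\ot(r*u_h))$, so the map descends to $\ot_R$. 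Left $R$-linearity is immediate, the left action on both sides being left multiplication. For right $R$-linearity I would compare $f^\T_{g,h}(u_g\ot(u_h*r))=u_g\al_g\!\big(u_h\al_h(r1_{h^\m})1_{g^\m}\big)$ with $f^\T_{g,h}(u_g\ot u_h)*r=u_g\al_g(u_h1_{g^\m})\,\al_{gh}(r1_{(gh)^\m})$; these agree after applying the identity $\al_g\!\big(\al_h(r1_{h^\m})1_{g^\m}\big)=1_g\,\al_{gh}(r1_{(gh)^\m})$ recorded for unital partial actions and absorbing the factor $1_g$ into $u_g\in D_g$.

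To see that $f^\T_{g,h}$ is bijective I would exhibit the explicit inverse $1_g(D_{gh})_{(gh)^\m}\ni d\mapsto d\ot_R 1_h$. Using $\al_g(1_h1_{g^\m})=1_g1_{gh}$ one checks $f^\T_{g,h}(d\ot 1_h)=d\,1_g1_{gh}=d$ for $d\in 1_gD_{gh}$, while in the other direction $u_g\al_g(u_h1_{g^\m})\ot 1_h=(u_g*u_h)\ot 1_h=u_g\ot(u_h*1_h)=u_g\ot u_h$, where the middle step uses the balancing relation with the scalar $u_h\in R$ and the last uses $u_h\in D_h=R1_h$. Alternatively, surjectivity together with the rank-$\le 1$ isomorphism $(D_g)_{g^\m}\ot_R(D_h)_{h^\m}\simeq 1_g(D_{gh})_{(gh)^\m}$ guaranteed by $\T$ being a unital partial representation would force injectivity.

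The heart of the proof is the commutativity of the associativity diagram on a simple tensor $u_g\ot u_h\ot u_l$. Along the bottom path I get $f^\T_{gh,l}\big(u_g\al_g(u_h1_{g^\m})\ot u_l\big)=u_g\al_g(u_h1_{g^\m})\,\al_{gh}(u_l1_{(gh)^\m})$, and along the top path $f^\T_{g,hl}\big(u_g\ot u_h\al_h(u_l1_{h^\m})\big)=u_g\al_g\!\big(u_h\al_h(u_l1_{h^\m})1_{g^\m}\big)$. I would then factor the argument of $\al_g$ in the top expression as the product of the two elements $u_h1_{g^\m}$ and $\al_h(u_l1_{h^\m})1_{g^\m}$ of $D_{g^\m}$, split it by multiplicativity of $\al_g$ as $\al_g(u_h1_{g^\m})\,\al_g\!\big(\al_h(u_l1_{h^\m})1_{g^\m}\big)$, and rewrite the second factor via the same identity as $1_g\,\al_{gh}(u_l1_{(gh)^\m})$; absorbing $1_g$ into $u_g$ matches the bottom expression exactly. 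Along the way I would note, using \eqref{alxemSxinversoSy}, that $\al_g(u_h1_{g^\m})\in D_g\cap D_{gh}$ and $\al_{gh}(u_l1_{(gh)^\m})\in D_{gh}\cap D_{ghl}$, so both composites land in the submodule labelling the lower-right corner of the diagram.

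The main obstacle is purely the bookkeeping of the bimodule structures and idempotents: one must keep straight the twisted right action $d*r=d\al_g(r1_{g^\m})$ on each $(D_g)_{g^\m}$, track which of $1_g,1_h,1_{gh},1_{hl},1_{(gh)^\m}$ cuts out which ideal, and apply the collapse $\al_g\!\big(\al_h(\,\cdot\,1_{h^\m})1_{g^\m}\big)=1_g\,\al_{gh}(\,\cdot\,1_{(gh)^\m})$ at precisely the right spot. No deeper difficulty arises once the nested automorphisms $\al_g\circ\al_h$ are rewritten through $\al_{gh}$.
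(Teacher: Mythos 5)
Your proof is correct and follows essentially the same route as the paper's: well-definedness and left $R$-linearity are immediate, right $R$-linearity rests on the identity $\al_g\big(\al_h(r1_{h^\m})1_{g^\m}\big)=1_g\,\al_{gh}(r1_{(gh)^\m})$, bijectivity is shown by an explicit two-sided inverse, and the associativity diagram is verified by the same two computations ending in $u_g\al_g(u_h1_{g^\m})\al_{gh}(u_l1_{(gh)^\m})$. Your inverse $d\mapsto d\ot_R 1_h$ is written differently from the paper's $d\mapsto 1_g\ot_R\al_{g^\m}(d)$, but the balancing relation over $\ot_R$ (together with $\al_{g^\m}(D_g\cap D_{gh})=D_{g^\m}\cap D_h$) shows these are the same element, so the two arguments coincide.
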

\begin{dem} It is not difficult to see that $f_{g,h}$ is well-defined  and left $R$-linear. To show that it is right $R$-linear take  $r \in R.$ Then
	\begin{eqnarray*}
		f^\T_{g,h}(u_g\ot u_h*r) & = & u_g\al_g(u_h\al_h(r1_{h^\m})1_{g\m})\\
		& = & u_g\al_g(u_h1_{g^\m})\al_g(\al_h(r1_{h^\m}) 1_{g\m}   )\\
		& = & u_g\al_g(u_h1_{g^\m})\al_{gh}(r1_{(gh)^\m})1_g\\
		& = & u_g\al_g(u_h1_{g^\m})*r\\
		& = & f_{g,h}(u_g\ot_R u_h)*r.
	\end{eqnarray*}

	Therefore, $f^\T_{g,h}$ is an  $R$-bilinear map. We show that the inverse of  $f^\T_{g,h}$ is given by  
	$$
	(f^\T_{g,h})^\m : 1_g(D_{gh})_{(gh)^\m} \ni u_{gh} \mapsto1_g \ot_R \al_{g^\m}(u_{gh}) \in  (D_g)_{g^\m}\ot_R(D_h)_{h^\m}\\
	$$

	Indeed, for   $u_{gh} \in 1_g(D_{gh})_{(gh)^\m}$ we have
	\begin{eqnarray*}
		(f^\T_{g,h}\circ (f^\T_{g,h})^\m)(u_{gh}) & = & f_{g,h}(1_g\ot_R \al_{g^\m}(u_{gh})) = \al_g(\al_{g^\m}(u_{gh}))=  u_{gh}. 
	\end{eqnarray*}

	On the other hand,  given $u_g \in (D_g)_{g^\m}$ and $u_h \in (D_h)_{h^\m}$ we see that 
	\begin{eqnarray*}
		((f^\T_{g,h})^\m\circ f^\T_{g,h})(u_g \ot_R u_h) & = & (f^\T_{g,h})^\m(u_g\al_g(u_h1_{g^\m})) = 1_g\ot_R \al_{g^\m}(u_g\al_g(u_h1_{g^\m}))\\
		& = & 1_g\ot_R \al_{g^\m}(u_g)u_h1_{g^\m} = 1_g*\al_{g^\m}(u_g)\ot_R u_h\\
		& = & 1_g\al_g(\al_{g^\m}(u_g))\ot_R u_h=1_gu_g\ot_R u_h=u_g\ot_Ru_h.
	\end{eqnarray*}
	Thus $f^\T_{g,h}$ is an $R$-bimodule isomorphism, for every $g,h \in G$.  Now we check that the following diagram is commutative.
	
	$$\xymatrix@C=3cm{ (D_g)_{g^\m}\ot_R(D_h)_{h^\m}\ot_R(D_l)_{l^\m}\ar[r]^{(D_g)_{g^\m}\ot_R f^\T_{h,l}} \ar[dd]_{f^\T_{g,h}\ot_R (D_l)_{l^\m}}& (D_g)_{g^\m}\ot_R1_h(D_{hl})_{(hl)^\m}\ar@{=}[d]\\
		&  1_{gh}(D_g)_{g^\m}\ot_R(D_{hl})_{(hl)^\m}\ar[d]^{f^\T_{g,hl}}\ar[d]\\
		1_g(D_{gh})_{(gh)^\m}\ot_R(D_l)_{l^\m}\ar[r]_{f^\T_{gh,l}} & 1_g1_{gh}(D_{ghl})_{(ghl)^\m}.   }$$
Let $u_g \in (D_g)_{g^\m}, u_h \in (D_h)_{h^\m}$ and $u_l \in (D_l)_{l^\m}$. Then
	\begin{eqnarray*}
		(f^\T_{gh,l}\circ (f^\T_{g,h}\ot_R (D_l)_{l^\m}))(u_g\ot_R u_h \ot_R u_l) & = & f^\T_{gh,l}(u_g\al_g(u_h1_{g^\m})\ot_R u_l)\\
		& = & u_g\al_g(u_h1_{g^\m})\al_{gh}(u_l1_{(gh)^\m}),
	\end{eqnarray*}
	and
	\begin{eqnarray*}
		(f^\T_{g,hl}\circ((D_g)_{g^\m}\ot_R f^\T_{h,l}))(u_g\ot_R u_h \ot_R u_l) & = & f^\T_{g,hl}(u_g\ot_R u_h\al_h(u_l1_{h^\m}))\\
		& = & u_g\al_g(u_h\al_h(u_l1_{h^\m})1_{g^\m})\\
		& = & u_g\al_g(u_h1_{g^\m})\al_{gh}(u_l1_{(gh)^\m})1_g\\
		& = & u_g\al_g(u_h1_{g^\m})\al_{gh}(u_l1_{(gh)^\m}).
	\end{eqnarray*}
Hence the diagram  above is commutative, and this shows that  $f^\T$ is a  factor set for $\T$. \end{dem}

 {\it In all what follows} the partial generalized crossed product $\D(\T)$ will be considered with multiplication given by the factor set $f^\T$ described above.

By  Proposition \ref{pcgpeanelcomutativocom1}  and   Lemma \ref{fs} one obtains the partial generalized crossed product 
$\D(\T)=\bigoplus_{g\in G} {(D_g)_{g^\m}},$  with multiplication given by 
\begin{equation*}
u_g\stackrel{\T}{\circ} u_h=u_g\al_g(u_h1_{g^\m}), \ \ \mbox{for } u_g \in (D_g)_{g^\m}, u_h\in (D_h)_{h^\m}.
\end{equation*}

Moreover, we have the next.

\begin{lem}\label{pcgpisoskew} Let $\al=(D_g,\al_g)_{g \in G}$ be a unital partial action of  $G$ on a commutative ring $R$, as above. Then the partial generalized crossed product  $\D(\T)$ is isomorphic to   $R\star_\al G$  as $R$-bimodules and  as $R^\al$-algebras.
\end{lem}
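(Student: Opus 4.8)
The plan is to write down the obvious degreewise identity map and check it respects both structures at once. Since $(D_g)_{g^\m}=D_g$ as sets, both $\D(\T)=\bigoplus_{g\in G}(D_g)_{g^\m}$ and $R\star_\al G=\bigoplus_{g\in G}D_g\delta_g$ are, as abelian groups, direct sums whose $g$-th summand is $D_g$. So the natural candidate is
$$\Phi\colon \D(\T)\to R\star_\al G,\qquad \Phi(u_g)=u_g\delta_g,$$
defined on each homogeneous component $(D_g)_{g^\m}$ and extended additively; it is a bijection because it restricts to the identity on each summand $D_g$.

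First I would check that $\Phi$ is an $R$-bimodule map. The left actions agree at once: on $\D(\T)$ one has $r*u_g=ru_g$, while $(r\delta_1)(u_g\delta_g)=ru_g\delta_g$ in $R\star_\al G$. For the right action I would use that the right $R$-structure on $(D_g)_{g^\m}$ is $u_g*r=u_g\al_g(r1_{g^\m})$, and compare it with $(u_g\delta_g)(r\delta_1)=u_g\al_g(r1_{g^\m})\delta_g$; these coincide, so $\Phi$ carries one $R$-bimodule structure onto the other.

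Next comes multiplicativity. The key point is simply that the factor set $f^\T$ of Lemma~\ref{fs} yields on $\D(\T)$ the product $u_g\stackrel{\T}{\circ}u_h=u_g\al_g(u_h1_{g^\m})$, while the defining rule of the partial skew group ring gives $(u_g\delta_g)(u_h\delta_h)=u_g\al_g(u_h1_{g^\m})\delta_{gh}$. Both expressions place the same element $u_g\al_g(u_h1_{g^\m})\in 1_g D_{gh}$ in the $gh$-component, so $\Phi$ is a ring homomorphism; it also sends the identity $1_R\in(D_1)_1$ of $\D(\T)$ to $1_R\delta_1$, the identity of $R\star_\al G$. Combined with the previous paragraph, $\Phi$ is simultaneously an $R$-bimodule isomorphism and a ring isomorphism.

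Finally, to read this as an $R^\al$-algebra isomorphism I would note that for $r\in R^\al$ one has $\al_g(r1_{g^\m})=r1_g$, whence $r\delta_1$ is central in $R\star_\al G$; thus both rings are $R^\al$-algebras via the central image of $R^\al$, and the $R$-bilinearity of $\Phi$ already established forces $R^\al$-linearity. I do not expect a genuine obstacle here: the entire content is that the two multiplication formulas are literally identical, so the only points that demand attention are the matching of the two right $R$-module actions and the bookkeeping that the product $u_g\stackrel{\T}{\circ}u_h$ lands in the correct component $1_g D_{gh}\subseteq D_{gh}$.
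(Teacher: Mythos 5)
Your proposal is correct and coincides with the paper's own argument: the paper uses exactly the same map $\kappa(u_g)=u_g\delta_g$, verifies multiplicativity by the same one-line computation comparing $f^\T_{g,h}$ with the skew group ring product, and likewise establishes the $R^\al$-algebra structure by first checking $R^\al\subseteq \Z(R\star_\al G)$. Your additional explicit matching of the left and right $R$-actions is bookkeeping the paper leaves implicit, but the route is identical.
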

\begin{dem} 
First of all notice that
	$R^\al\subseteq \Z(R\star_\al G)$. Indeed, for  $r \in R^\al, g\in G$ and  $a_g\delta_g \in R\star_\al G$, we have 
$r(a_g\delta_g)=ra_g\delta_g=   r  1_ga_g\delta_g=  a_g \al_g(r1_{g^\m})\delta_g    =(a_g\delta_g)r,$ and thus  $R\star_\al G$ is an $R^\al$-algebra.  Consider the $R$-bimodule isomorphism
$$
\kappa:  \D(\T) \ni u_g \mapsto   u_g\delta_g\in R\star_\al G.$$
Then, given $g,h\in G, u_g \in (D_g)_{g^\m}$ and $u_h\in (D_h)_{h^\m}$  we have
		\begin{eqnarray*}
			\kappa(u_g\stackrel{\T}{\circ}u_h) =  \kappa(u_g\al_g(u_h1_{g^\m}))=u_g\al_g(u_h1_{g^\m})\delta_{gh}
			 =  (u_g\delta_g)(u_h\delta_h)=\kappa(u_g)\kappa(u_h),
		\end{eqnarray*}
this shows that  $\kappa$ is an  $R^\al$-algebra isomorphism. 
		\end{dem}

Let $R\subseteq S$ be an extension of (non-necessarily commutative) rings. Denote by  $\mathcal{S}_R(S)$ the set of the   $R$-subbimodules of $S,$ equipped with  the multiplication
\begin{equation}\label{eq:MonoidS}
MN=\left\lbrace  \dsum _{i=1}^l m_in_i; \ m_i \in M, n_i \in N, l\in \mathbb Z^+\right\rbrace ,
\end{equation} for $M,N\in \mathcal{S}_R(S),$  then  $\mathcal{S}_R(S)$ is  a monoid with neutral element $R.$ 

Let   $\Gamma:  G  \to \mathcal{S}_R(S)$ be  a unital  partial representation,  that is, a partial representation such that  for  $g\in G,$     $\e_g:=
\G_g\G_{g^{-1}}=R1_g$  for some  central idempotent $1_g\in R.$ By \cite[Proposition 3.22]{DoRo}, $[ \G _g ] \in \Pics (R)$ for all $g \in G.$   
Denote
\begin{equation}\label{pz}
	\p_\Z(S/R)^{(G)}=\{\xymatrix@C=1.2cm{ [P]\ar@{=>}[r]|{[\phi]} & [X]}\in \p_\Z(S/R): \ \G_g\phi(P)=\phi(P)\G_g, \ \mbox{for all } \ g\in G\},
\end{equation}
where $\Z$  stands for the center of $R,$ and $\p_\Z(S/R)$ is given by \eqref{pk}. It follows from  \cite[Remark  5.1]{DoRo} that $ \xymatrix@C=1.2cm{ [P]\ar@{=>}[r]|{[\phi]} & [X]}\in \p_\Z(S/R)^{(G)},$ if and only if, $$\G (g) \phi(P)\G ({g^\m})=\phi(P)1_g,$$ for any $g\in G.$ Moreover, we  have the next.
\begin{lem}\cite[Lemma 5.2]{DoRo} \label{lemma:SubgrP^G}  The set $\p_\Z(S/R)^{(G)}$ is a  subgroup of $\p_\Z(S/R)$.
\end{lem}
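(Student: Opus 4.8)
The plan is to verify the three standard subgroup conditions for $\p_\Z(S/R)^{(G)}$ inside the group $\p_\Z(S/R)$: that it contains the identity, that it is closed under the product, and that it is closed under inverses. Throughout I would move freely between the defining condition $\G_g\phi(P)=\phi(P)\G_g$ and its equivalent reformulation $\G(g)\phi(P)\G(g^\m)=\phi(P)1_g$, which holds for every $g\in G$ by \cite[Remark~5.1]{DoRo}, using whichever is more convenient. A useful preliminary observation is that the defining relation is symmetric under $g\mapsto g^\m$, so membership automatically supplies the relations for all group elements at once.

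For the identity, represented by the class of the inclusion $\iota\colon R\to S$, the image is $\iota(R)=R$, and since each $\G_g$ is an $R$-subbimodule of $S$ one has $\G_g R=\G_g=R\G_g$; hence the condition holds trivially. For closure under the product I would take two classes, of $\phi\colon P\to X$ and of $\psi\colon Q\to Y$, both in $\p_\Z(S/R)^{(G)}$, whose product is the class of $\phi\ot\psi\colon P\ot_R Q\to X\ot_S Y$ with $(\phi\ot\psi)(p\ot q)=\phi(p)\ot_S\psi(q)$. Writing $W$ for the image $R$-subbimodule, I would slide elements of $\G_g\subseteq S$ across the balanced tensor $\ot_S$: from $\gamma(\phi(p)\ot_S\psi(q))=(\gamma\phi(p))\ot_S\psi(q)$, use $\G_g\phi(P)=\phi(P)\G_g$ to move a factor of $\G_g$ to the right of $\phi(P)$, push it through $\ot_S$ onto $\psi(Q)$, and then apply $\G_g\psi(Q)=\psi(Q)\G_g$. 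This gives $\G_g W\subseteq W\G_g$, and the reverse inclusion is entirely symmetric, so $\G_g W=W\G_g$.

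The hard part will be closure under inverses, where the inverse of the class of $\phi$ is the class of $\phi^*\colon P^*\to X^*$ with $\phi^*(f)(s\phi(p))=sf(p)$. Here I would use the reformulated condition and the left and right $S$-actions on $X^*$ recalled in \eqref{Mestrelaebimodulo}, namely $(s\cdot h)(x)=h(xs)$ and $(h\cdot s)(x)=h(x)s$. For $\gamma_1\in\G(g)$, $\gamma_2\in\G(g^\m)$ and $f\in P^*$ one computes $(\gamma_1\cdot\phi^*(f)\cdot\gamma_2)(s\phi(p))=\phi^*(f)(s\phi(p)\gamma_1)\gamma_2$; decomposing $\phi(p)\gamma_1\in\phi(P)\G_g=\G_g\phi(P)$ and applying the defining formula for $\phi^*$ collapses this to $s\,\tilde f(p)$, where $\tilde f(p):=(\gamma_1\phi^*(f)\gamma_2)(\phi(p))$ lies in $\G(g)\,R\,\G(g^\m)\subseteq\G(g)\G(g^\m)=R1_g$. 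A one-line check using left $S$-linearity shows $\tilde f$ is left $R$-linear, hence $\tilde f\in P^*$ and $\gamma_1\phi^*(f)\gamma_2=\phi^*(\tilde f)$; since $1_g$ is central and $f$ is $R$-linear, $1_g$ acts identically on the two sides of $\phi^*(P^*)$, giving $\phi^*(\tilde f)=\phi^*(\tilde f)1_g$. Thus $\G(g)\phi^*(P^*)\G(g^\m)\subseteq\phi^*(P^*)1_g$.

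To finish I would obtain the reverse inclusion formally: applying the inclusion just proved with $g$ replaced by $g^\m$, then conjugating by $\G(g)$ on the left and $\G(g^\m)$ on the right and using $\G(g)\G(g^\m)=R1_g$, $1_{g^\m}\G(g^\m)=\G(g^\m)$ and $1_g\phi^*(P^*)=\phi^*(P^*)1_g$, pins down $1_g\phi^*(P^*)1_g\subseteq\G(g)\phi^*(P^*)\G(g^\m)$, that is $\phi^*(P^*)1_g\subseteq\G(g)\phi^*(P^*)\G(g^\m)$. Combining both inclusions yields $\G(g)\phi^*(P^*)\G(g^\m)=\phi^*(P^*)1_g$, so the inverse class again lies in $\p_\Z(S/R)^{(G)}$. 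I expect this inverse case to be the only delicate point: the explicit description of $\phi^*$ forces one to track how $\G_g$ interacts with the dual pairing, whereas the identity and product cases are immediate once the sliding-across-$\ot_S$ idea is in hand.
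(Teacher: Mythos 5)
Your proof is correct, but note that there is nothing in this paper to compare it against: the lemma is imported by citation from \cite[Lemma 5.2]{DoRo}, and no proof is reproduced here. Your direct verification holds up at every step. The identity and product cases are handled exactly as one would expect (the $R$-subbimodule property gives $\G_g R=\G_g=R\G_g$, and sliding factors of $\G_g$ across the balanced tensor $\ot_S$ gives $\G_g W\subseteq W\G_g$ with the reverse inclusion by symmetry). The inverse case, which is indeed the only delicate point, is also sound: the computation $(\gamma_1\cdot\phi^*(f)\cdot\gamma_2)(s\phi(p))=s\,\tilde f(p)$ with $\tilde f(p)=\phi^*(f)(\phi(p)\gamma_1)\gamma_2\in \G_gR\G_{g^\m}=R1_g$ uses only the left $S$-linearity of $\phi^*(f)$, the membership condition $\G_g\phi(P)=\phi(P)\G_g$, and the defining formula for $\phi^*$, and it yields $\G(g)\phi^*(P^*)\G(g^\m)\subseteq\phi^*(P^*)1_g$; your conjugation trick for the reverse inclusion is legitimate, and it is worth emphasizing that the identity $1_g\cdot f=f\cdot 1_g$ on $P^*$ (hence $1_g\phi^*(P^*)=\phi^*(P^*)1_g$) is precisely where the hypothesis $[P]\in\Pic_\Z(R)$ and the centrality of the idempotents $1_g$ in $R$ enter. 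One small point you leave implicit: you check the membership condition on the standard representatives $\iota$, $\phi\ot\psi$ and $\phi^*$ of the identity, product and inverse classes, which is fine because the condition $\G_g\phi(P)=\phi(P)\G_g$ is invariant under isomorphism in $\mathcal{M}(S/R)$ (the $S$-bilinear component $\beta$ of an isomorphism carries $\phi(P)$ onto $\psi(Q)$ and commutes with multiplication by elements of $\G_g$).
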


 Let $\al=(D_g,\al_g)_{g \in G}$ be a unital partial action of  $G$ on $R.$  We recall from   \cite[ Example 3.25]{DoRo}  that the map 
\begin{equation}\label{t0}
\Theta _0 : G\ni g \mapsto D_g \delta _g\in  \mathcal{S}_R( R\star_{\al}G ) 
\end{equation}
is a unital  partial representation of $G$ into the semigroup  
$\mathcal{S}_R( R\star_{\al}G )$ of the $R$-subbimodules of  $ R\star_{\al}G .$ In addition, $\D(\T_0) = R\star_{\al}G.$

{\it In what follows in this work $R$ will denote a commutative ring, and   $\al=(D_g,\al_g)_{g\in G}$  a unital partial action of $G$ on  $R$, with $D_g = 1_g R,$ $g\in G.$ }

\begin{teo}\label{isopic} Suppose  that $ R^\alpha \subseteq R$ is a partial Galois   extension. Let, furthermore,  $S= R\star_{\al}G$ and $\T _0$ the partial representation given  in \eqref{t0}. Then
	\begin{equation}\label{isogr}
	\begin{array}{c c c l}
	\xi: & \Pic_{R^\al}(R^\al) & \longrightarrow & \p_R(S/R)^{(G)}\\
	&  [P_0] & \longmapsto & \xymatrix@C=1.2cm{  [P_0\ot_{R^\al} R]\ar@{=>}[r]|-{[\phi]} & [P_0\ot_{R^\al}S]},
	\end{array}
	\end{equation}
is an isomorphism of groups,	where  $\phi:P_0\ot_{R^\al}R\longrightarrow P_0\ot_{R^\al}S$ is the inclusion and the  $S$-bimodule structure of  
$P_0\ot_{R^\al}S$ is defined by \eqref{eq:bimodStructureOnTensorPr}.
	
	\end{teo}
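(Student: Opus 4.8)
The plan is to reduce the construction of $\xi$ to Proposition~\ref{morfismoentrePicePSR} and then to prove that $\xi$ is a bijection onto $\p_R(S/R)^{(G)}$. First I would apply Proposition~\ref{morfismoentrePicePSR} with $k=R^\al$: the hypotheses require $R$ commutative and $R^\al\subseteq\Z(S)$, and the latter is exactly what was verified at the beginning of the proof of Lemma~\ref{pcgpisoskew} (for $r\in R^\al$ one has $r(a_g\delta_g)=(a_g\delta_g)r$). Hence $\xi$ is a well-defined group homomorphism $\Pic_{R^\al}(R^\al)\to\p_R(S/R)$, and it only remains to show that its image lies in the subgroup $\p_R(S/R)^{(G)}$ and that the corestriction is a bijection.

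For the image, fix $[P_0]\in\Pic_{R^\al}(R^\al)$, write $P=P_0\ot_{R^\al}R$ and $X=P_0\ot_{R^\al}S$, and let $\phi$ be the inclusion, so that $\phi(P)=P_0\ot_{R^\al}R\delta_1$. Using the $S$-bimodule structure \eqref{eq:bimodStructureOnTensorPr} and the multiplication rule of $R\star_\al G$, one computes $(d_g\delta_g)\cdot(p_0\ot_{R^\al}r\delta_1)=p_0\ot_{R^\al}d_g\al_g(r1_{g^\m})\delta_g$ and $(p_0\ot_{R^\al}r\delta_1)\cdot(d_g\delta_g)=p_0\ot_{R^\al}rd_g\delta_g$. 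Letting $d_g\in D_g$ and $r\in R$ vary, both one-sided products fill out $P_0\ot_{R^\al}D_g\delta_g$; since $\T_0(g)=D_g\delta_g$ by \eqref{t0}, this gives $\T_0(g)\phi(P)=\phi(P)\T_0(g)=P_0\ot_{R^\al}D_g\delta_g$, which is the defining condition of $\p_R(S/R)^{(G)}$ (cf.\ the characterization recalled before Lemma~\ref{lemma:SubgrP^G}). Thus $\xi$ maps into $\p_R(S/R)^{(G)}$.

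Injectivity follows from the partial Galois hypothesis. If $\xi([P_0])=\xi([P_0'])$, an isomorphism of the corresponding objects of $\mathcal M(S/R)$ restricts to an $R$-module isomorphism $P_0\ot_{R^\al}R\simeq P_0'\ot_{R^\al}R$, hence an isomorphism $R\ot_{R^\al}P_0\simeq R\ot_{R^\al}P_0'$ of $R^\al$-modules. By Remark~\ref{jjota} the extension $R^\al\subseteq R$ makes $R$ a f.g.p.\ $R^\al$-module, so Lemma~\ref{MotNisoMotN'implicaNisoN'} (cancellation of the f.g.p.\ factor $R$) yields $P_0\simeq P_0'$ as $R^\al$-modules, i.e.\ $[P_0]=[P_0']$ in $\Pic_{R^\al}(R^\al)$.

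The main obstacle is surjectivity, where an inverse must be built from the $G$-equivariance datum. Given a class $[P]\R[X]$ in $\p_R(S/R)^{(G)}$, the identities $\T_0(g)\phi(P)\T_0(g^\m)=\phi(P)1_g$ provide, for each $g$, an $R$-semilinear bijection $\phi(P)1_{g^\m}\to\phi(P)1_g$ obtained by conjugation inside the $S$-bimodule $X$; I would use these to equip $\phi(P)$ with a partial $G$-action compatible with $\al$, that is, to realize $\phi(P)$ as (the $R$-module underlying) a left $R\star_\al G$-module, and then set $P_0:=\phi(P)^G$ as in \eqref{mg}. Partial Galois descent (the isomorphism $\omega$ of \eqref{isop} in Remark~\ref{jjota}) should then give $R\ot_{R^\al}P_0\simeq\phi(P)\simeq P$, whence Lemma~\ref{MMotNpfgimplicaNpfg} forces $P_0$ to be f.g.p.\ of rank $1$ over $R^\al$, so $[P_0]\in\Pic_{R^\al}(R^\al)$; finally one extends the descent isomorphism along $\bar\phi_l\colon S\ot_RP\simeq X$ to obtain $P_0\ot_{R^\al}S\simeq X$ compatibly with $\phi$, proving $\xi([P_0])=[P]\R[X]$. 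The delicate point is precisely the bridge between the two-sided equivariance condition and an honest left $S$-module structure on $\phi(P)$, since Remark~\ref{jjota} is stated for $S$-modules while $\phi(P)$ is a priori only an $R$-submodule of $X$; making this identification and checking that the reconstructed object is isomorphic (not merely componentwise isomorphic) to $[P]\R[X]$ is where the bulk of the work lies.
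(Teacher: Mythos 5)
Your proposal follows essentially the same route as the paper's proof: Proposition~\ref{morfismoentrePicePSR} with $k=R^\al$ (after checking $R^\al\subseteq\Z(S)$) gives the homomorphism, the product computation with $D_g\delta_g$ places the image in $\p_R(S/R)^{(G)}$, cancellation via Lemma~\ref{MotNisoMotN'implicaNisoN'} and Remark~\ref{jjota} gives injectivity, and surjectivity is attacked exactly as in the paper: the equivariance condition is used to define a left $R\star_\al G$-module structure on $P$ by $(a_g\delta_g)\cdot p=\phi^\m\bigl((a_g\delta_g)\phi(p)(1_{g^\m}\delta_{g^\m})\bigr)$, one sets $P_0=P^G$, and one reconstructs $X$ from $S\ot_{R^\al}P^G$ via $\bar\phi_l$.

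There is, however, one concrete gap in the surjectivity argument: you claim that Lemma~\ref{MMotNpfgimplicaNpfg} forces $P^G$ to be f.g.p.\ \emph{of rank $1$} over $R^\al$. That lemma only yields that $P^G$ is finitely generated projective; it gives no control whatsoever on the rank, and rank $1$ is precisely what is needed to conclude $[P^G]\in\Pic_{R^\al}(R^\al)$. The paper closes this with a separate localization argument: localizing at a prime of $R^\al$ one may assume $R^\al$ is local, whence $R$ is semilocal and $\Pic(R)$ is trivial, so $P\simeq R$; the descent isomorphism $\omega\colon R\ot_{R^\al}P^G\simeq P$ then yields ${\bf rk}_{R^\al}(R)={\bf rk}_{R^\al}(R)\,{\bf rk}_{R^\al}(P^G)$, forcing ${\bf rk}_{R^\al}(P^G)=1$. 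Without this (or an equivalent) step your map has no reason to land in $\Pic_{R^\al}(R^\al)$. The remaining verifications you defer — associativity of the action, the key claim that $\phi(p)$ commutes with every $a_g\delta_g$ when $p\in P^G$ (which is what makes $\nu\colon S\ot_{R^\al}P^G\to X$ right $S$-linear), and the commutativity of the square relating $\psi$, $\omega$, $\nu$, $\phi$ — are exactly the computations the paper carries out, and they do go through; so your outline is sound once the rank point is repaired.
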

\begin{dem} Since
$ (P_0\ot_{R^\al} R) D_g \delta _g = D_g \delta_g (P_0\ot_{R^\al} R), $
we have $([P_0\ot_{R^\al}R]=[\phi]\R[P_0\ot_{R^\al}S]) \in \p_R(S/R)^{(G)}.$
	But   $R^\al\subseteq \Z(R\star_{\al}G)$, thus one gets  from  Proposition \ref{morfismoentrePicePSR} that  $\xi$ is a homomorphism of groups.
	Let  $[P_0]\in \Pic _{R^\al} (R^\al)$ be such that  $([P_0\ot_{R^\al}R]=[\phi]\R[P_0\ot_{R^\al}S])=([R]=[\iota]\R[S])$ in $\p_R(S/R)^{(G)}$. Then, $P_0\ot_{R^\al}R\simeq R\simeq R^\al\ot_{R^\al}R$ as $R$-bimodules. Since $R$ is a f.g.p. $R^\al$-module we have from  Lemma \ref{MotNisoMotN'implicaNisoN'} that $P_0\simeq R^\al$ as $R^\al$-modules. Then  $[P_0]=[R^\al]$ in  $\Pic _{R^\al}(R^\al)$ and  $\xi$ is a monomorphism. To show that it is surjective   take $([P]=[\phi]\R[X])\in \p_R(S/R)^{(G)}$. Then $ \T_0 (g) \phi(P)\T_0 ({g^\m})=\phi(P1_g)$, for all $g \in G$. Now we show that the equality
	\begin{equation}\label{triangle}
	(a_g\delta_g)\cdot p=\phi^{\m}((a_g\delta_g)\phi(p)(1_{g^\m}\delta_{g^\m})) \in P1_g, 
	\end{equation}
	$p \in P,$ endows  $P$ with a left  $S$-module structure.

Indeed, take $a_g\delta_g, a_h\delta_h \in S$ and  $p \in P$ then
	\begin{eqnarray*}
		(a_g\delta_g)\cdot ((a_h\delta_h)\cdot p) & = & (a_g\delta_g) \cdot (\phi^{-1}((a_h\delta_h)\phi(p)(1_{h^\m} \delta_{h^\m})))\\
		& = & \phi^{\m}((a_g\delta_g)\phi(\phi^{-1}((a_h\delta_h)\phi(p)(1_{h^\m} \delta_{h^\m}))(1_{g^\m}\delta_{g^\m}))\\
		& = &  \phi^{\m}((a_g\delta_g)(a_h\delta_h)\phi(p)(1_{h^\m}\delta_{h^\m})(1_{g^\m}\delta_{g^\m}))\\
		& = & \phi^{-1}(a_g\al_g(a_h 1_{g^\m})\delta_{gh}\phi(p)1_{h^\m}\al_{h^\m}(1_{g^\m}1_h)\delta_{h^\m g^\m})\\
		& = & \phi^{-1}(a_g\al_g(a_h1_{g^\m})\delta_{gh}\phi(p)1_{h^\m}1_{h^\m g^\m}\delta_{h^\m g^\m})\\
		& = & \phi^{-1}(a_g\al_g(a_h1_{g^\m})\delta_{gh} { 1_{h^\m} \delta_1} \phi(p)1_{h^\m g^\m}\delta_{h^\m g^\m})\\
		& = & \phi^{-1}(a_g\al_g(a_h1_{g^\m})\al_{gh}(1_{h^\m}1_{h^\m g^\m})\delta_{gh}\phi(p)1_{h^\m g^\m}\delta_{h^\m g^\m})\\
		& = & \phi^{-1}(a_g\al_g(a_h1_{g^\m})1_g1_{gh}\delta_{gh}\phi(p)1_{h^\m g^\m}\delta_{h^\m g^\m})\\
		& = & \phi^{-1}(a_g\al_g(a_h1_{g^\m})\delta_{gh}\phi(p)1_{h^\m g^\m}\delta_{h^\m g^\m})\\
		& = & a_g\al_g(a_h1_{g^\m})\cdot p\\
		& = & [(a_g\delta_g)(a_h\delta_h)]\cdot p.
	\end{eqnarray*}
	Then  there is a left   $R$-module isomorphism $ \omega:  R\ot_{R^\al}P^G  \to P$ given by \eqref{isop}. Since $P$ and $R$ are f.g.p.  $R^\al$-modules, it follows from  Lemma \ref{MMotNpfgimplicaNpfg} that  $P^G$ is also a f.g.p.  $R^\al$-module. We show now that  $[P^G]\in \Pic_{R^\al}(R^\al)$. For this purpose, localizing at a prime ideal  ${\mathfrak p}$ of $R^\al ,$ we may assume that $R^\al $ is a local ring. Then 
		$R$ is semilocal,   and $\Pic(R)$ is trivial (see  \cite[p. 38 (D)]{L}). Consequently, $P \simeq R,$ and the isomorphism  $ \omega$  implies that
	\begin{equation*}
	{\bf rk}_{R^\al}(R)={\bf rk}_{R^\al}(P)={\bf rk}_{R^\al}(R){\bf rk}_{R^\al}(P^G). \label{postokdePG}
	\end{equation*}
	This yields that  $\mbox{rank}_{R^\al}(P^G)=1$. Since ${\mathfrak p}$ was chosen arbitrarily, it follows that  $[P^G]\in \Pic_{R^\al}(R^\al)$.
	Then, 
	$$\xi([P^G])=([P^G\ot_{R^\al}R]=[\psi]\R[P^G\ot_{R^\al}S]),$$
	where $\psi$ is the  inclusion.
	
	Consider the   $S$-$R$-bimodule  isomorphism  given by 
	$$\nu :S\ot_{R^\al}P^G\longrightarrow S\ot_RR\ot_{R^\al}P^G\stackrel{S\ot \omega}{\longrightarrow} S\ot_RP\stackrel{\overline{\phi_l}}{\longrightarrow} X,$$
	where $\omega$ is given by (\ref{isop}). Then, 
	$$\nu ((a_g\delta_g)\ot_{R^\al} p)=(a_g\delta_g)\phi(p), \ \mbox{where}\  a_g \in D_g,\,\,p\in P^G.$$
	
	\begin{afr} If $p \in P^G$,  the equality $(a_g\delta_g)\phi(p)=\phi(p)(a_g\delta_g)$, holds for all $g \in G$ and $a_g \in D_g.$
	\end{afr}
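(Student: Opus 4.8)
The plan is to turn the hypothesis $p\in P^G$ into one concrete identity inside the $S$-bimodule $X$ and then push it to the desired commutation relation using only the multiplication rule of $S=R\star_\al G$ together with the centrality of $P$. First I would unravel the meaning of $p\in P^G$: by \eqref{mg} this says $(1_g\delta_g)\cdot p=1_gp$, and feeding this into the defining formula \eqref{triangle} of the left $S$-action and then applying $\phi$ gives the key identity
\begin{equation*}
(1_g\delta_g)\,\phi(p)\,(1_{g^\m}\delta_{g^\m})=1_g\phi(p),\qquad g\in G.\tag{$\star$}
\end{equation*}
Since $([P]=[\phi]\R[X])\in\p_R(S/R)^{(G)}$ forces $[P]\in\Pic_R(R)$, the bimodule $P$ is central over $R$; as $\phi$ is $R$-bilinear this centrality transfers to the image, so $r\phi(p)=\phi(p)r$ for every $r\in R$, and in particular $1_{g^\m}\phi(p)=\phi(p)1_{g^\m}$ and $1_g\phi(p)=\phi(p)1_g$.

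Next I would record the two elementary computations in $S$ that carry the argument. Using the rule $(r_g\delta_g)(r'_h\delta_h)=r_g\al_g(r'_h1_{g^\m})\delta_{gh}$ together with $\al_g(1_{g^\m})=1_g$, one obtains
$$(1_{g^\m}\delta_{g^\m})(1_g\delta_g)=1_{g^\m}\qquad\text{and}\qquad(1_g\delta_g)\,1_{g^\m}=1_g\delta_g.$$
The first collapses a product to a single idempotent, while the second is an \emph{absorption} property: $1_g\delta_g$ is right-invariant under $1_{g^\m}$. I would also note that it suffices to treat the case $a_g=1_g$, since once $(1_g\delta_g)\phi(p)=\phi(p)(1_g\delta_g)$ is known, the general case follows by writing $a_g\delta_g=a_g\,(1_g\delta_g)$ with $a_g\in R$ and moving the scalar $a_g$ across $\phi(p)$ by centrality.

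For the case $a_g=1_g$ the idea is to right-multiply $(\star)$ by $1_g\delta_g$. On the left-hand side the factor $(1_{g^\m}\delta_{g^\m})(1_g\delta_g)$ collapses to $1_{g^\m}$, and then $(1_g\delta_g)\phi(p)1_{g^\m}=(1_g\delta_g)\phi(p)$ by centrality together with the absorption identity and associativity of the $S$-action on $X$. On the right-hand side $1_g\phi(p)(1_g\delta_g)=\phi(p)1_g(1_g\delta_g)=\phi(p)(1_g\delta_g)$. Comparing the two expressions yields $(1_g\delta_g)\phi(p)=\phi(p)(1_g\delta_g)$, and the reduction above then finishes the claim for arbitrary $a_g\in D_g$.

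The main obstacle I anticipate is bookkeeping rather than conceptual: one must track the various central idempotents $1_g$ carefully and keep straight which copy of $R$ each factor belongs to (a left action, a right action, or the coefficient of a $\delta$), and verify the two $S$-identities cleanly. The genuinely load-bearing move is the right-multiplication of $(\star)$ by $1_g\delta_g$ combined with the absorption identity $(1_g\delta_g)1_{g^\m}=1_g\delta_g$; once these are in place, everything else is formal manipulation driven by the centrality of $P$.
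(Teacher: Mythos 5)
Your proposal is correct and follows essentially the same route as the paper: both derive the identity $(1_g\delta_g)\phi(p)(1_{g^\m}\delta_{g^\m})=\phi(p1_g)$ from $p\in P^G$ and \eqref{triangle}, right-multiply it by $1_g\delta_g$ and collapse $(1_{g^\m}\delta_{g^\m})(1_g\delta_g)$ to $1_{g^\m}\delta_1$, use the centrality of $P$ (from $[P]\in\Pic_R(R)$) to move idempotents across $\phi(p)$, and then settle general $a_g\in D_g$ by factoring $a_g\delta_g=(a_g\delta_1)(1_g\delta_g)$. The bookkeeping identities you isolate (collapse and absorption) are exactly the manipulations the paper performs inline.
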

	Indeed, since $p \in P^G$, we have
	\begin{equation*}
	{ p1_g = (1_g\delta_g) \cdot p = \phi^{-1}((1_g\delta_g)\phi(p)(1_{g^\m}\delta_{g^\m}))},  \ \ \mbox{for all } \ g\in G.
	\end{equation*}
	Then for any $g\in G$ one gets
	\begin{eqnarray*}
		& \Rightarrow & (1_g\delta_g)\phi(p)(1_{g^\m}\delta_{g^\m})=\phi(p1_g),\\
		& \R & (1_g\delta_g)\phi(p)(1_{g^\m}\delta_{g^\m})(1_g\delta_g)=\phi(p1_g)(1_g\delta_g),\\
		& \R & (1_g\delta_g)\phi(p)(1_{g^\m}\delta_1)=\phi(p)(1_g\delta_1)(1_g\delta_g),\\
		& \R & (1_g\delta_g)(1_{g^\m}\delta_1)\phi(p)=\phi(p)(1_g\delta_g),\\
		& \R & (1_g\delta_g)\phi(p)=\phi(p)(1_g\delta_g).
	\end{eqnarray*}
	
	Which gives, 
	\begin{eqnarray*}
		(a_g\delta_g)\phi(p) & = & (a_g\delta_1)(1_g\delta_g)\phi(p)=(a_g\delta_1)\phi(p)(1_g\delta_g)
		 =  \phi(p)(a_g\delta_1)(1_g\delta_g)=\phi(p)(a_g\delta_g),
	\end{eqnarray*}
	for any  $g \in G$. 	Now we check that $\nu $ is right  $S$-linear.
	\begin{eqnarray*}
		 \nu (((a_g\delta_{g})\ot_{R^\al} p)\cdot (a_h\delta_h))  & = & \nu ((a_g\delta_{g})(a_h\delta_h)\ot_{R^\al} p) = (a_g\delta_{g})(a_h\delta_h)\phi(p)\\
		&  = & (a_g\delta_{g})\phi(p)(a_h\delta_h) = \nu ((a_g\delta_g)\ot _{R^\al}p)(a_h\delta_h),
	\end{eqnarray*} keeping in mind  \eqref{eq:bimodStructureOnTensorPr2} for the right $S$-action on $P.$
	Thus, $\nu $ is an isomorphism of $S$-bimodules. 
	
	Finally, observe that
	$\nu (\psi(r\ot_{R^\al} p))=\nu (r\delta_1\ot_{R^\al} p)=(r\delta_1)\phi(p)=\phi(rp)=\phi(\om(r\ot_{R^\al} p)),$
	para todo $r \in R$ e $p\in P^G$. Then, the diagram
	$$\xymatrix{  R\ot_{R^\al}P^G\ar[dd]_{\omega}\ar[rr]^{\psi} & & S\ot_{R^\al}P^G\ar[dd]^{\nu}\\
		& & \\
		P\ar[rr]_{\phi} & & X}$$
	is commutative. 
	Thus, 
	$$\xi([P^G])=([P^G\ot_{R^\al}R]=[\psi]\R[P^G\ot_{R^\al}S])=([P]=[\phi]\R[X]).$$
	then  $\xi$ is an  epimorphism, and thus a group isomorphism.
	\end{dem}

\subsection {Partial cohomology of groups}\label{sec:ParCohomol}
The notion of partial group cohomology was introduced and studied in \cite{dokuchaev2015partial}. 
Let $R$ be a commutative ring (or a commutative monoid) and $\al=(D_g,\al_g)_{g\in G}$ be a unital partial action of $G$ on $R$, where each $D_g$ is generated by the central idempotent  $1_g$. An $n$-\textit{cochain}, with $n\in \N$, of $G$ with values in $R$ is a function $f:G^n\to R$ such that $f(g_1,...,g_n)\in \U(R1_{g_1}1_{g_1g_2}...1_{g_1g_2...g_n})$. A $0$-cochain as  an element in $\U(R)$. 
The  set $C^n(G,\al,R)$   of all $n$-cochains of $G$ with values in  $R$ is an abelian group with the multiplication defined point-wise,  whose identity element is $I(g_1,...,g_n)=1_{g_1}1_{g_1g_2}...1_{g_1g_2...g_n}$ and $f^\m(g_1,...,g_n)=f(g_1,...,g_n)^\m \in \U(R1_{g_1}1_{g_1g_2}...1_{g_1g_2...g_n})$, for $g_1,...,g_n \in G,$ is the inverse of $f\in C^n(G,\al, R).$

We recall the next.

\begin{pro}\label{prodeltan} \cite[Proposition 1.5]{dokuchaev2015partial} The map  
$\delta^n:C^n(G,\al,R)\to C^{n+1}(G,\al,R)$ defined by 
\begin{eqnarray*}
(\delta^nf)(g_1,...,g_{n+1}) & = & \al_{g_1}(f(g_2,...,g_{n+1})1_{g_1^\m})\prod_{i=1}^nf(g_1,...,g_ig_{i+1},...,g_{n+1})^{(-1)^i}\nonumber\\
& & f(g_1,...,g_n)^{(-1)^{n+1}}, \label{definicaodedeltapequeno}
\end{eqnarray*} for any $f\in  C^n(G,\al,R)$ and $n>0,$ and for $n=0$  by $$
(\delta^0x)(g)=\al_{g}(x1_{g^\m})x^\m, $$ where $ x \in \U(R), $ is a   group morphism such that
$$
	(\delta^{n+1}\circ \delta^n)(f)(g_1,...,g_{n+2})=1_{g_1}1_{g_1g_2}...1_{g_1g_2...g_{n+2}},
$$
	for every $f \in C^n(G,\al,R)$ and $g_1,...,g_{n+1}\in G$.
\end{pro}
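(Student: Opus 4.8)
The plan is to verify the three assertions in turn: that $\delta^n$ maps $C^n(G,\al,R)$ into $C^{n+1}(G,\al,R)$, that it is a homomorphism of abelian groups, and that $\delta^{n+1}\circ\delta^n$ is the identity element of $C^{n+2}(G,\al,R)$. Throughout I would lean on the two structural identities recorded earlier, namely $\al_g(1_h1_{g^\m})=1_g1_{gh}$ and $\al_{gh}(s1_{h^\m g^\m})1_g=\al_g(\al_h(s1_{h^\m})1_{g^\m})$, the first to control the supports of the idempotents and the second to merge nested applications of the partial isomorphisms.

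For well-definedness I would first check that each factor of $(\delta^nf)(g_1,\dots,g_{n+1})$ is an invertible element of the ideal $R1_{g_1}1_{g_1g_2}\cdots 1_{g_1\cdots g_{n+1}}$, so that the same holds for their product. The inner factors $f(g_1,\dots,g_ig_{i+1},\dots,g_{n+1})^{\pm 1}$ and $f(g_1,\dots,g_n)^{\pm 1}$ are handled by the hypothesis that $f$ is a cochain, noting that the defining idempotent of each such factor divides the target idempotent. The only delicate term is the leading one $\al_{g_1}(f(g_2,\dots,g_{n+1})1_{g_1^\m})$: here I would iterate $\al_g(1_h1_{g^\m})=1_g1_{gh}$ to obtain $\al_{g_1}(1_{g_2}1_{g_2g_3}\cdots 1_{g_2\cdots g_{n+1}}1_{g_1^\m})=1_{g_1}1_{g_1g_2}\cdots 1_{g_1\cdots g_{n+1}}$, which shows that this factor is an invertible element of the correct ideal. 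The homomorphism property is then immediate: the multiplication on cochains is pointwise, $R$ is commutative, $\al_{g_1}$ is a ring isomorphism and hence multiplicative, and the exponents $(-1)^i$ are fixed, so $\delta^n(ff')=\delta^n(f)\,\delta^n(f')$.

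The substantive step, and the one I expect to be the main obstacle, is the identity $\delta^{n+1}\circ\delta^n=I$. My approach would be to expand $(\delta^{n+1}(\delta^nf))(g_1,\dots,g_{n+2})$ fully and to exhibit a pairing of its factors under which the contributions cancel, leaving only the idempotent $1_{g_1}1_{g_1g_2}\cdots 1_{g_1\cdots g_{n+2}}$. As in the classical computation the cancellations come from the two ways of collapsing a consecutive pair $(g_i,g_{i+1})$; the new difficulty is the bookkeeping of idempotents and of the partial isomorphisms. Two points require genuine care. First, the leading factor of $\delta^{n+1}$ applied to $\delta^nf$ contains $\al_{g_1}\big((\delta^nf)(g_2,\dots,g_{n+2})1_{g_1^\m}\big)$, whose own leading summand is a nested term $\al_{g_1}\big(\al_{g_2}(f(g_3,\dots,g_{n+2})1_{g_2^\m})1_{g_1^\m}\big)$; I would rewrite this using $\al_{g_1}(\al_{g_2}(s1_{g_2^\m})1_{g_1^\m})=\al_{g_1g_2}(s1_{(g_1g_2)^\m})1_{g_1}$ so that it matches the $\al_{g_1g_2}$-term produced by collapsing $(g_1,g_2)$ in the outer differential. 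Second, every cancellation must be read inside the appropriate ideal, so at each matching step I would multiply through by the relevant idempotents and invoke $\al_g(1_h1_{g^\m})=1_g1_{gh}$ to confirm that the paired factors share the same support before cancelling. Assembling these pairings across all indices $0\le i\le n+2$ reduces the total product to the identity cochain, which is the claim.
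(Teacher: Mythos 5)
The paper does not prove this proposition: it is quoted from \cite[Proposition 1.5]{dokuchaev2015partial} and used as a recalled fact, so there is no internal proof to compare against. Your outline is the standard direct verification, which is what the cited source carries out: check that each factor of $(\delta^nf)(g_1,\dots,g_{n+1})$ is invertible in an ideal whose idempotent dominates the target idempotent, with the leading factor pinned down by iterating $\al_g(1_h1_{g^\m})=1_g1_{gh}$; observe that pointwise multiplication and multiplicativity of $\al_{g_1}$ make $\delta^n$ a homomorphism; and run the Eilenberg--MacLane-style pairwise cancellation for $\delta^{n+1}\circ\delta^n$, merging the nested term via $\al_{g_1}(\al_{g_2}(s1_{g_2^\m})1_{g_1^\m})=\al_{g_1g_2}(s1_{(g_1g_2)^\m})1_{g_1}$. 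The two structural identities you lean on are exactly the ones the paper records after the definition of a unital partial action, and your well-definedness step is sound (a product of elements invertible in ideals $Re_i$ with $e\le e_i$, one of which already lies in $\U(Re)$, is invertible in $Re$).

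The only place your sketch stops short of a proof is the cancellation bookkeeping itself, and you correctly identify why it is delicate: in the partial setting a matched pair does not cancel to $1$ but to an idempotent (for instance, the merged nested term and the inverse of the leading factor of the $(g_1,g_2)$-collapsed term differ by the factor $1_{g_1}$), so the computation terminates in a product of leftover idempotents that must be shown to equal $1_{g_1}1_{g_1g_2}\cdots 1_{g_1g_2\cdots g_{n+2}}$. That is execution rather than a missing idea, so I see no genuine gap; but note a shortcut that removes most of it: your well-definedness argument, applied twice, already gives $(\delta^{n+1}\delta^nf)(g_1,\dots,g_{n+2})\in \U(R1_{g_1}1_{g_1g_2}\cdots 1_{g_1g_2\cdots g_{n+2}})$, and once the pairing exhibits this element as a product of idempotents it is itself an idempotent; since the only idempotent invertible in $Re$ is $e$, the leftover-idempotent tally can be skipped entirely.
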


We consider the groups
$
Z^n(G,\al,R)=\ker(\delta^n) \ \ \mbox{and} \ \ B^n(G,\al,R)=\mbox{Im}(\delta^{n-1})
$
that are the group of  \textit{  the partial $n$-cocycles} and that of the \textit{partial $n$-coboundaries}, respectively.  
By Proposition~\ref{prodeltan}, we have $B^n(G,\al,R)\subseteq Z^n(G,\al,R)$. Thus, for $n>0$
we define the group of the partial $n$-cohomologies by 
\begin{equation*}
H^n(G,\al,k)=\dfrac{Z^n(G,\al,R)}{B^n(G,\al,R)}.
\end{equation*}
For $n=0$ we set $H^0(G,\al,R)=Z^0(G,\al,R)=\ker(\delta^0)$.

\begin{exe}
	For $n=0$  we have $$H^0(G,\al, R)=Z^0(G,\al, R)=\{x\in \U(R), \al_g(x1_{g^\m})=x1_g, \ \forall \ g\in G\},$$
	\begin{equation}\label{b1}B^1(G,\al,R)=\mbox{Im}(\delta^0)=\{f \in C^1(G,\al,R); \exists \ x \in \U(R) \ \ f(g)=\al_g(x1_{g^\m})x^\m, \ \forall \ g\in G\}.\end{equation}
	
	If $n=1$, then  $(\delta^1f)(g,h)=\al_g(f(h)1_{g^\m})f(gh)^\m f(g)$, for every  $f \in C^1(G,\al,R)$ which implies,
	\begin{equation}\label{z1}Z^1(G,\al,R)=\{ f\in C^1(G,\al,R); \al_g(f(h)1_{g^\m})f(g)=f(gh)1_g, \ \forall \ g,h \in G \},\end{equation}
	$$B^2(G,\al,R)=\{ f\in C^2(G,\al,R); \exists \ \s\in C^1(G,\al,R), \ \mbox{with} \ f(g,h)\s(gh)=\al_g(\s(h)1_{g^\m})\s(g), \ \forall \ g,h \in G \}.$$
	For $n=2$, $$(\delta^2f)(g,h,l)=\al_g(f(h,l)1_{g^\m})f(gh,l)^\m f(g,hl) f(g,h)^\m,$$  for each   $f \in C^2(G,\al,R)$ and $g,h,l \in G.$ Then,
	\begin{equation}\label{z2}Z^2(G,\al, R)=\{f \in C^2(G,\al,R); \al_g(f(h,l)1_{g^\m})f(g,hl)=f(gh,l)f(g,h), \ \forall \ g,h,l \in G \}.\end{equation}
	
\end{exe}

Let  $f,f'\in Z^n(G,\al,R)$, we say that $f$ and $f'$ are \textit{cohomologous} if there is $g \in C^{n-1}(G,\al,R)$ such that $f=f'(\delta^ng)$. In this case,  $[f]=[f']$ in  $H^n(G,\al,R)$.  

An $n$-cocycle $f$ is called \textit{normalized}, if   for any  $g_1,...,g_{n-1}\in G, n>1$
\begin{equation}\label{eq:normalized}
f(1,g_1,...,g_{n-1})=f(g_1,1,...,g_{n-1})=...=f(g_1,...,g_{n-1},1)=1_{g_1}1_{g_1g_2}...1_{g_1...g_{n-1}}.
\end{equation}

\begin{obs}\label{coco}By \cite[Remark 3.6]{DoRo} every $1$-cocycle  is   normalized. 
Moreover, it follows by  \cite[Remark 2.6]{dokuchaev2015partial}  that  if  $f\in Z^2(G,\al, R)$ then there is  
a normalized $\widetilde{f}\in Z^2(G,\al,R)$  such that $\s=\widetilde{\s}(\delta^1\epsilon)$, 
for some $\epsilon \in C^1(G,\al, R)$. Therefore, $\s$ is cohomologous to a normalized partial  $2$-cocycle.	\end{obs}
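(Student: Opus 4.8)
The statement splits into two independent assertions, and the plan is to derive both directly from the explicit descriptions \eqref{z1} and \eqref{z2} of the cocycle groups together with the coboundary formula in Proposition~\ref{prodeltan}; no deeper machinery seems necessary. The guiding idea is to evaluate a given cocycle on arguments equal to $1$, and then, for the second assertion, to cancel the resulting non-normalized data by an explicitly chosen $1$-coboundary.

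For the first assertion I would argue as follows. Let $f\in Z^1(G,\al,R)$; since $1_1=1_R$ we have $f(1)\in\U(R)$. Substituting $g=h=1$ into the defining identity $\al_g(f(h)1_{g^\m})f(g)=f(gh)1_g$ of \eqref{z1} collapses, because $\al_1=\mathrm{Id}$ and $1_1=1_R$, to $f(1)f(1)=f(1)$; as $f(1)$ is invertible this forces $f(1)=1_1$, which is precisely the normalization requirement for a $1$-cochain. Hence every partial $1$-cocycle is automatically normalized.

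For the second assertion, fix $f\in Z^2(G,\al,R)$, so that $f(1,1)\in\U(R)$. First I would read off the boundary values of $f$ from \eqref{z2}: setting $g=1$ and cancelling the unit $f(h,l)$ inside $R1_h1_{hl}$, then specializing $h=1$, yields $f(1,h)=f(1,1)1_h$ for all $h$; and taking $h=l=1$ and cancelling one factor $f(g,1)$ inside $R1_g$ yields $f(g,1)=\al_g(f(1,1)1_{g^\m})$ for all $g$. Next I would define $\epsilon\in C^1(G,\al,R)$ by $\epsilon(1)=f(1,1)^\m$ and $\epsilon(g)=1_g$ for $g\neq1$; this is a legitimate cochain since $f(1,1)^\m\in\U(R)$ and $1_g\in\U(R1_g)$. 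Feeding $\epsilon$ into $(\delta^1\epsilon)(g,h)=\al_g(\epsilon(h)1_{g^\m})\epsilon(gh)^\m\epsilon(g)$ gives $(\delta^1\epsilon)(1,h)=f(1,1)^\m1_h$ and $(\delta^1\epsilon)(g,1)=\al_g(f(1,1)1_{g^\m})^\m$, so that $\widetilde f:=f\cdot(\delta^1\epsilon)$ satisfies $\widetilde f(1,h)=1_h$ and $\widetilde f(g,1)=1_g$. Thus $\widetilde f$ is normalized in the sense of \eqref{eq:normalized}, lies in $Z^2(G,\al,R)$ since $Z^2$ is a group containing $B^2=\mathrm{Im}(\delta^1)$, and is cohomologous to $f$ by construction.

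The hard part will not be conceptual but bookkeeping: in the partial setting every cancellation of a cocycle value is legitimate only inside the ideal where that value is a unit, so each displayed equality must be read as holding after multiplication by the relevant idempotents $1_g$ and $1_{gh}$. I would take particular care to verify that $\epsilon$ genuinely takes values in the prescribed unit groups $\U(R1_g)$, guaranteeing that $\delta^1\epsilon$ is a true element of $B^2(G,\al,R)$ and hence that $\widetilde f$ and $f$ represent the same class in $H^2(G,\al,R)$. Once this idempotent tracking is in place, the argument reduces to the familiar classical normalization of $2$-cocycles.
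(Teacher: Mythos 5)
Your proof is correct, but note that the paper itself does not prove this remark at all: both assertions are quoted from the literature, the normalization of partial $1$-cocycles from \cite[Remark 3.6]{DoRo} and the normalization of partial $2$-cocycles up to coboundary from \cite[Remark 2.6]{dokuchaev2015partial}. What you have done is reconstruct, directly from \eqref{z1}, \eqref{z2} and Proposition~\ref{prodeltan}, the content of those cited remarks: the identity $f(1)f(1)=f(1)$ with $f(1)\in\U(R)$ forcing $f(1)=1_R$; the boundary values $f(1,h)=f(1,1)1_h$ and $f(g,1)=\al_g(f(1,1)1_{g^\m})$ extracted from the $2$-cocycle identity; and the coboundary of $\epsilon\in C^1(G,\al,R)$ with $\epsilon(1)=f(1,1)^\m$, $\epsilon(g)=1_g$ for $g\neq 1$, which kills exactly these values. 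Your idempotent bookkeeping is handled correctly: each cancellation takes place inside $\U(R1_g1_{gh})$, and since $\al_g$ is a ring isomorphism $R1_{g^\m}\to R1_g$, the equality $\al_g(f(1,1)1_{g^\m})^\m=\al_g(f(1,1)^\m 1_{g^\m})$ is legitimate; moreover $\widetilde f=f(\delta^1\epsilon)$ lies in $Z^2(G,\al,R)$ because $B^2\subseteq Z^2$ and $Z^2$ is a group. Two cosmetic remarks: the paper's displayed relation $\s=\widetilde{\s}(\delta^1\epsilon)$ is a typo for $f=\widetilde{f}(\delta^1\epsilon)$, and your version $\widetilde f=f(\delta^1\epsilon)$ yields it upon replacing $\epsilon$ by $\epsilon^\m$ (as $\delta^1$ is a homomorphism); also, \eqref{eq:normalized} is stated only for $n>1$, so your reading of normalization for $n=1$ as $f(1)=1_R$ is the intended one. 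The benefit of your route is a short, self-contained verification in place of an appeal to external references; it carries no real cost, as the computations are essentially those behind the cited remarks.
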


 Let $f\in Z^2(G,\al,R).$ The {\it partial crossed product} is the  abelian group  $R\star_{\al, f}G=\bigoplus\limits_{g\in G}D_g\delta_g$  with the multiplication given by 
$$(a_g\delta_g)(b_h\delta_h)=a_g\alpha_g(b_h1_{g^\m})f(g,h)\delta_{gh},$$ for all $g,h\in G, a_g\in D_g$ and $b_h\in D_h.$ Then  $R\star_{\al, f}G$ is an associative ring with identity $1_{R\star_{\al, f}G} =f(1,1)^\m\delta_1.$ Indeed, for $g\in G$ and $a_g\in D_g$ one has that
\begin{align*} (a_g\delta_g)(f(1,1)^\m\delta_1)=a_g\alpha_g(f(1,1)1_g)^\m f(g,1)\delta_g=a_g\delta_g,
\end{align*} where the last equality follows by taking $h=l=1$ in \eqref{z2}. In a similar way, one shows $ (a_g\delta_g)=(f(1,1)^\m\delta_1)(a_g\delta_g).$
The associativity of $R\star_{\al, f}G$ can be explained in various ways referring to known results for twisted partial crossed product with normalized $2$-cocycle $f$. One way is to refer the reader to a general result on associativity established in \cite[Theorem 2.4]{dokuchaev2008crossed}, observing that the proof does not use the normalizing condition \eqref{eq:normalized}. Another one is to show that the above defined $R\star_{\al, f}G$ is isomorphic (as a non-necessarily associative ring) to $R\star_{\al, f'}G,$ where $f' $ is a normalized $2$-cocycle. The latter follows from the next fact, which will be also used later.

\begin{pro} \label{coiso} Let $f, f'\in Z^2(G,\al,R)$ be  two cohomologous cocycles, then  the (non-necessarily associative)   $R^\alpha$-algebras  $R\star_{\al, f}G$ and $R\star_{\al, f'}G$ are isomorphic.
\end{pro}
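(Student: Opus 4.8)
The plan is to produce an explicit isomorphism built from the $1$-cochain witnessing that $f$ and $f'$ are cohomologous. Since $f$ and $f'$ are cohomologous, there is $\s\in C^1(G,\al,R)$ with $f=f'(\delta^1\s)$, that is,
\begin{equation*}
f(g,h)=f'(g,h)\,\al_g(\s(h)1_{g^\m})\,\s(gh)^\m\,\s(g),\qquad g,h\in G,
\end{equation*}
where $\s(g)\in \U(D_g)$ for every $g$. I would then define the $R^\al$-linear map
\begin{equation*}
\Phi\colon R\star_{\al, f}G\longrightarrow R\star_{\al, f'}G,\qquad \Phi(a_g\delta_g)=a_g\s(g)\delta_g,
\end{equation*}
extended additively, and claim that $\Phi$ is an isomorphism of (non-necessarily associative) $R^\al$-algebras.

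First I would check that $\Phi$ is a well-defined $R^\al$-linear bijection. Since $\s(g)$ is invertible in $D_g=R1_g$, the element $a_g\s(g)$ again lies in $D_g$, so $\Phi$ maps $\bigoplus_g D_g\delta_g$ into itself; the additive map $a_g\delta_g\mapsto a_g\s(g)^\m\delta_g$ is a two-sided inverse, and both maps are clearly $R^\al$-linear.

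The heart of the proof is the verification that $\Phi$ is multiplicative. Computing in the target algebra,
\begin{equation*}
\Phi(a_g\delta_g)\,\Phi(b_h\delta_h)=a_g\s(g)\al_g(b_h\s(h)1_{g^\m})f'(g,h)\delta_{gh},
\end{equation*}
while
\begin{equation*}
\Phi\big((a_g\delta_g)(b_h\delta_h)\big)=a_g\al_g(b_h1_{g^\m})f(g,h)\s(gh)\delta_{gh}.
\end{equation*}
Substituting the coboundary relation for $f(g,h)$ into the second expression, using that $R$ is commutative to reorder factors, that $\al_g$ is multiplicative on $D_{g^\m}$ so that $\al_g(b_h\s(h)1_{g^\m})=\al_g(b_h1_{g^\m})\al_g(\s(h)1_{g^\m})$, that $\s(gh)^\m\s(gh)=1_{gh}$, and that $f'(g,h)=f'(g,h)1_{gh}$ absorbs the leftover idempotent, both expressions collapse to the same element of $D_{gh}\delta_{gh}$. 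Since $\Phi$ is a bijective multiplicative map between unital algebras, it automatically sends the identity $f(1,1)^\m\delta_1$ to $f'(1,1)^\m\delta_1$, so no separate check is required. Thus $\Phi$ is the desired $R^\al$-algebra isomorphism.

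I expect the only real obstacle to be the careful tracking of the central idempotents $1_g,1_{gh}$ throughout the multiplicativity computation: one must repeatedly use $\s(g)=\s(g)1_g$, $f'(g,h)\in\U(R1_g1_{gh})$, and $\s(gh)^\m\s(gh)=1_{gh}$ to guarantee that every product lands in the correct ideal and that the two sides agree on the nose rather than merely up to an idempotent factor. No appeal to associativity of either crossed product is needed, which is precisely why the statement remains valid in the non-necessarily associative setting.
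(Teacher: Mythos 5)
Your proposal is correct and is essentially the paper's own proof: the paper takes $\varepsilon\in C^1(G,\al,R)$ with $f=f'(\delta^1\varepsilon)$ and uses exactly the map $a_g\delta_g\mapsto a_g\varepsilon(g)\delta_g$, with inverse $a_g\delta_g\mapsto a_g\varepsilon(g)^\m\delta_g$, declaring the multiplicativity "readily seen." Your write-up simply fills in that routine verification (commutativity of $R$, multiplicativity of $\al_g$ on $D_{g^\m}$, and absorption of the idempotents $1_g,1_{gh}$), which is exactly the computation the paper leaves to the reader.
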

\begin{dem}  Take $\varepsilon \in C^1(G,\al, R)$ such that $f=f'(\delta^1\varepsilon).$ Then the  map $R\star_{\al, f}G\ni a_g\delta_g\mapsto a_g\varepsilon_g\delta_g\in R\star_{\al, f'}G,$ is readily seen to be a 
$R^\alpha$-algebra homomorphism with inverse $R\star_{\al, f'}G\ni a_g\delta_g\mapsto a_g\varepsilon^\m_g\delta_g\in R\star_{\al, f}G.$
\end{dem}

\begin{cor}\label{cor:assoc}  Let $\alpha $ be as above. Then for any  $f \in Z^2(G,\al,R)$ the $R^\alpha$-algebra  $R\star_{\al, f}G$ is associative.
\end{cor}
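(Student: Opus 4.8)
The plan is to reduce to the case of a normalized $2$-cocycle, for which associativity is already available, and then transport that associativity along the isomorphism furnished by Proposition~\ref{coiso}. First, given an arbitrary $f\in Z^2(G,\al,R)$, I would apply the second part of Remark~\ref{coco}: there is a normalized cocycle $\widetilde f\in Z^2(G,\al,R)$ cohomologous to $f$, so that $f=\widetilde f\,(\delta^1\varepsilon)$ for some $\varepsilon\in C^1(G,\al,R)$. This puts us in a position to compare $R\star_{\al,f}G$ with the crossed product built from $\widetilde f$.

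Next I would record that $R\star_{\al,\widetilde f}G$ is associative. Since $\widetilde f$ is normalized, this is exactly the twisted partial crossed product to which the general associativity criterion applies, namely \cite[Theorem 2.4]{dokuchaev2008crossed}; hence $R\star_{\al,\widetilde f}G$ is an associative $R^\al$-algebra. (As the discussion preceding Proposition~\ref{coiso} notes, the normalizing condition is not actually needed there, so one could even invoke that result directly for $f$; but routing through $\widetilde f$ keeps the argument self-contained and uses the machinery just set up.)

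Finally, Proposition~\ref{coiso} supplies an isomorphism of \emph{non-necessarily associative} $R^\al$-algebras $\Psi\colon R\star_{\al,f}G\to R\star_{\al,\widetilde f}G$, explicitly $a_g\delta_g\mapsto a_g\varepsilon_g\delta_g$, with inverse $a_g\delta_g\mapsto a_g\varepsilon_g^{\m}\delta_g$. Associativity is the equational identity $(xy)z=x(yz)$, which is preserved by any bijective multiplicative map: for $x,y,z\in R\star_{\al,f}G$ one has $\Psi((xy)z)=(\Psi(x)\Psi(y))\Psi(z)=\Psi(x)(\Psi(y)\Psi(z))=\Psi(x(yz))$, using associativity of $R\star_{\al,\widetilde f}G$, and applying $\Psi^{-1}$ gives $(xy)z=x(yz)$. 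Therefore $R\star_{\al,f}G$ is associative, as claimed. The only point demanding any care is the purely formal one that Proposition~\ref{coiso} is stated for the possibly non-associative multiplications, so that the transfer of the associativity identity is legitimate; beyond that the argument is immediate.
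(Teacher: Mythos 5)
Your proposal is correct and follows essentially the same route as the paper's own proof: reduce to a normalized cocycle via Remark~\ref{coco}, invoke \cite[Theorem 2.4]{dokuchaev2008crossed} for the normalized crossed product, and transfer associativity back through the isomorphism of Proposition~\ref{coiso}. Your explicit verification that a bijective multiplicative map transports the associativity identity is a detail the paper leaves implicit, but it is the same argument.
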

\begin{dem} By Remark~\ref{coco} $f$ is cohomologous to a normalized $2$-cocycle  $f' \in Z^2(G,\al,R).$ By  \cite[Theorem 2.4]{dokuchaev2008crossed} the ring $R\star_{\al, f}G$ is associative and the associativity of  $R\star_{\al, f}G$ follows from Proposition~\ref{coiso}.
\end{dem}

\begin{obs}\label{coigual}
Consider the unital partial   representation  $\T$ defined in  (\ref{definicaodeT0}).  Notice that the map 
$$ \zeta_g:  (D_g)_{g^\m}\ot_R (D_{g^\m})_g  \ni a_g\ot_R b_{g^\m} \mapsto  a_g\al_g(b_{g^\m}) \in  D_g$$
is a  $R$-bimodule isomorphism with inverse $D_g\ni a_g  \mapsto  a_g \ot _R  1_{g^\m} \in (D_g)_{g^\m}\ot _R(D_{g^\m})_g .$
On the other hand, the partial action $\overline{\al}$ induced by    $\T$ as in  \cite[Proposition 3.11]{DoRo}, coincides with  $\al$.  Indeed,  since   $\zeta_g(1_g\ot_R 1_{g^\m})=1_g\al_{g}(1_{g^\m})=1_g$,  then 
	\begin{equation*}
\overline{\al_g}(r1_{g^\m})= \zeta_g(1_g\ot r1_{g^\m})=1_g\al_g(r1_{g^\m})=\al_g(r1_{g^\m}),
\end{equation*} for any $r\in R$ and $g\in G.$  From this one concludes that
	$H_\T^n (G,\overline{\al},R) =H^n(G,\al,R),  $ for all $n\in \N.$
\end{obs}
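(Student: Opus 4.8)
The final statement to prove is Remark~\ref{coigual}, which makes two claims: first, that the map $\zeta_g$ is an $R$-bimodule isomorphism with the stated inverse; second, that the partial action $\overline{\al}$ induced by $\T$ coincides with the original $\al$, and consequently that the partial cohomology groups agree, $H^n_\T(G,\overline{\al},R)=H^n(G,\al,R)$ for all $n$.

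The plan is to verify each piece directly. First I would check that $\zeta_g$ is well defined and $R$-bilinear, keeping track of the two $R$-actions on $(D_g)_{g^\m}$ and $(D_{g^\m})_g$ coming from the definition in \eqref{definicaodeT0}: on $(D_g)_{g^\m}$ the right action is twisted via $\al_g$, while on $(D_{g^\m})_g$ the left action is twisted via $\al_{g^\m}$. The key computation is to confirm balancing over $R$, i.e.\ that $\zeta_g(a_g * r \ot b_{g^\m}) = \zeta_g(a_g \ot r * b_{g^\m})$; using \eqref{reux} and the relation $\al_g(b_{g^\m}1_{g^\m})=\al_g(b_{g^\m})$ this should reduce to an identity among elements of $D_g$. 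Then I would confirm that the proposed map $a_g \mapsto a_g \ot_R 1_{g^\m}$ is a genuine two-sided inverse, using that $a_g\ot_R b_{g^\m} = a_g\ot_R 1_{g^\m} * \al_g(b_{g^\m})$ to collapse a general tensor into the image of the inverse. This establishes \eqref{isot} concretely and identifies the idempotent: $\zeta_g(1_g\ot 1_{g^\m}) = 1_g\al_g(1_{g^\m}) = 1_g$.

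Next I would recall how $\overline{\al}$ is extracted from a unital partial representation into $\Pics(R)$ as in \cite[Proposition 3.11]{DoRo}: the induced partial action on the center is read off from the isomorphisms $\G_g\ot_R\G_{g^\m}\simeq R1_g$ via the action on the image of the identity. Here $\G=\T$, the isomorphism is precisely $\zeta_g$, and since $\zeta_g(1_g\ot 1_{g^\m})=1_g$, the induced map sends $r1_{g^\m}\mapsto \zeta_g(1_g\ot r1_{g^\m})=1_g\al_g(r1_{g^\m})=\al_g(r1_{g^\m})$. This is exactly $\al_g$ on $D_{g^\m}$, so $\overline{\al_g}=\al_g$ as displayed. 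The only care needed is to match conventions: one must be sure that the formula for the induced action from \cite{DoRo} is being applied with the correct normalization of $\zeta_g$, which is why pinning down $\zeta_g(1_g\ot 1_{g^\m})=1_g$ is the crucial normalizing fact.

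Finally, with $\overline{\al}=\al$ as partial actions of $G$ on $R$ (same ideals $D_g$, same isomorphisms $\al_g$), the cochain groups $C^n(G,\overline{\al},R)$ and $C^n(G,\al,R)$ coincide by definition, and the coboundary operators $\delta^n$ from Proposition~\ref{prodeltan}, being built solely from the $\al_g$ and the idempotents $1_g$, are identical. Hence the cocycle and coboundary subgroups agree and $H^n_\T(G,\overline{\al},R)=H^n(G,\al,R)$ for every $n\in\N$. I do not anticipate a genuine obstacle here; the only mildly delicate point is the bookkeeping of the twisted $R$-actions when verifying that $\zeta_g$ respects the tensor balancing and is bilinear, since those twisted structures are what distinguish $(D_g)_{g^\m}$ from $D_g$ as a plain module. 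Once that routine check is done, the coincidence of the induced action and the equality of cohomology groups follow formally.
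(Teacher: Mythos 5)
Your proposal follows the paper's own argument: the paper likewise records that $\zeta_g$ is an $R$-bimodule isomorphism with inverse $D_g\ni a_g\mapsto a_g\ot_R 1_{g^\m}$, uses the normalization $\zeta_g(1_g\ot_R 1_{g^\m})=1_g\al_g(1_{g^\m})=1_g$ together with the construction of \cite[Proposition 3.11]{DoRo} to get $\overline{\al_g}(r1_{g^\m})=\zeta_g(1_g\ot_R r1_{g^\m})=\al_g(r1_{g^\m})$, and then concludes that the cochain complexes, hence all $H^n$, coincide. One bookkeeping correction: by \eqref{definicaodeT0} the left action on $(D_{g^\m})_g$ is \emph{ordinary} multiplication and it is the \emph{right} action that is twisted (by $\al_{g^\m}$), so the balancing check reads $\zeta_g(a_g\al_g(r1_{g^\m})\ot_R b_{g^\m})=a_g\al_g(rb_{g^\m})=\zeta_g(a_g\ot_R rb_{g^\m})$, which holds, whereas with the left-twisted convention you describe the map would fail to be well defined.
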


\subsection{Partial generalized crossed products and the group $\C(\T/R)$}\label{sec:groupC}

 We shall work with the group  $\C(\T/R)$ from \cite{DoRo}, defined using the partial representation $\T$  from \eqref{definicaodeT0}, and for which we first recall a couple of notions.

Let $M$ and $N$ be $R$-bimodules. We  write  $M|N$ if $M$ is isomorphic, as an $R$-bimodule, to a direct summand 
of some direct power  of $N$, that is, if there exists  an $R$-bimodule $M'$ and an $R$-bimodule isomorphism  $N^{(n)}\simeq M\oplus M'$, for some   $n \in \N$.  It is clear that the relation $| $   is  reflexive, transitive and compatible with the tensor product, in the sense that,
if    $M|N$ and $Q$ is an  $R$-bimodule, then 
\begin{equation*}
	(M\ot_RQ)|(N\ot_RQ) \ \ \mbox{and} \ \ (Q\ot_RM)|(Q\ot_RN). \label{combatibilidadecomopt}
\end{equation*}

We proceed with  the next.
\begin{defi}\cite{DoRo} Let $\D(\Omega)$ and $\D(\Gm)$ be  partial generalized crossed products with $R$-bimodule and ring isomorphism   $\iota:R\longrightarrow \Omega _1$ and ${\iota}':R\longrightarrow \Gm_1$, respectively,  given by Proposition \ref{pcgpeanelcomutativocom1}.  A morphism of partial generalized crossed products  $F:\D(\Omega)\longrightarrow \D(\Gm)$ is a set of $R$-bimodule morphisms  $\{F_g:\Omega_g\longrightarrow \Gm_g\}_{ g \in G }$ such that $F_e\circ \iota={\iota}' ,$ and  the following  commutative diagram is satisfied:
	\begin{equation}
	\xymatrix{ \Omega_g\ot_R\Omega_h\ar[rr]^{f_{g,h}^\Omega}\ar[d]_{F_g\ot F_h} & & 1_g\Omega_{gh}\ar[d]^{F_{gh}}\\
		\Gm_g\ot_R\Gm_h\ar[rr]_{f_{g,h}^\Gm} & & 1_g\Gm_{gh} } \label{morfismodepcgp0}
	\end{equation}
	A morphism $F$ of partial generalized crossed products is called an isomorphism if each  morphism $F_g:\Omega_g\longrightarrow \Gm_g$ is an $R$-bimodule isomorphism.  We denote by  $[\D(\G)]$ the isomorphism class of $\D(\G).$
\end{defi}

\begin{obs} \label{sod}It is a consequence of  \cite[Remark 3.20]{DoRo}  that if  $F=\{F_g:\Om_g\to \Gm_g\}_ {g\in G }$  is a family of $R$-bimodule isomorphism satisfying the commutative diagram (\ref{morfismodepcgp0}), then $F$ is an isomorphism of partial generalized crossed products.
\end{obs}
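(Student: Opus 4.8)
The plan is to reduce the assertion to the single point that has not been granted by the hypotheses, namely the compatibility of $F$ with the embeddings of $R$. By definition, $F$ is an isomorphism of partial generalized crossed products exactly when it is a morphism---a family $\{F_g\}_{g\in G}$ of $R$-bimodule maps making diagram \eqref{morfismodepcgp0} commute and satisfying $F_1\circ\iota=\iota'$ (the condition written $F_e\circ\iota=\iota'$ in the definition)---with each $F_g$ an $R$-bimodule isomorphism. Here the $F_g$ are assumed to be isomorphisms and the diagram is assumed to commute, so the entire content is the identity $F_1\circ\iota=\iota'$, where $\iota\colon R\to\Om_1$ and $\iota'\colon R\to\Gm_1$ are the ring isomorphisms supplied by Proposition~\ref{pcgpeanelcomutativocom1}.

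First I would specialize diagram \eqref{morfismodepcgp0} to $g=h=1$. Since $1_1=1_R$, the map $f^\Om_{1,1}\colon\Om_1\ot_R\Om_1\to\Om_1$ is precisely the multiplication $u\stackrel{\Om}{\circ}u'$ of \eqref{prog} on the identity component, and likewise for $\Gm$; commutativity of the square then reads $F_1(u\stackrel{\Om}{\circ}u')=F_1(u)\stackrel{\Gm}{\circ}F_1(u')$ for all $u,u'\in\Om_1$. Evaluating at $u=\iota(1_R)$, the unity of $\D(\Om)$, and using that $\iota(1_R)$ is a left identity of $\Om_1$, the left-hand side collapses to $F_1(u')$, so $F_1(u')=F_1(\iota(1_R))\stackrel{\Gm}{\circ}F_1(u')$ for every $u'\in\Om_1$. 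Because $F_1$ is surjective, $F_1(\iota(1_R))$ is a left identity of $\Gm_1$; as $\Gm_1\simeq R$ via $\iota'$ has the unique identity $\iota'(1_R)$, this forces $F_1(\iota(1_R))=\iota'(1_R)$.

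It then remains to upgrade this to the full equality of maps. Using the left $R$-linearity of $F_1$ together with the relations $\iota(r)=r\cdot\iota(1_R)$ and $\iota'(r)=r\cdot\iota'(1_R)$, which come from the bimodule structures on the identity components inherited from the ring multiplications, I get $F_1(\iota(r))=r\cdot F_1(\iota(1_R))=r\cdot\iota'(1_R)=\iota'(r)$ for all $r\in R$, that is, $F_1\circ\iota=\iota'$. Hence $F$ is a morphism of partial generalized crossed products, and since each $F_g$ is an $R$-bimodule isomorphism, $F$ is an isomorphism.

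The argument is essentially formal, and the only step that genuinely needs to be carried out---the sole reason the statement is not immediate from the definitions---is the verification of the unit compatibility $F_1\circ\iota=\iota'$, which is \emph{not} among the hypotheses and has to be extracted from the $(1,1)$-instance of \eqref{morfismodepcgp0} as above; everything else is the bookkeeping of the definition of isomorphism. One may of course simply invoke \cite[Remark 3.20]{DoRo}, but the self-contained reduction just outlined explains why a bare family of bimodule isomorphisms compatible with the factor sets is automatically an isomorphism of crossed products.
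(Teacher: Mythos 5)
Your proposal is correct, and it supplies what the paper itself never writes out: the paper's ``proof'' of this remark is nothing but the citation to \cite[Remark 3.20]{DoRo}, so there is no internal argument to compare against. Your reduction is the natural one: the only clause of the definition of morphism not granted by the hypotheses is $F_1\circ\iota=\iota'$, and you extract it correctly --- the $(g,h)=(1,1)$ instance of \eqref{morfismodepcgp0} (where $1_1=1_R$, so $f^\Om_{1,1}$ and $f^\Gm_{1,1}$ are exactly the ring multiplications on $\Om_1$ and $\Gm_1$) shows $F_1$ is multiplicative; evaluating at the unit and using surjectivity of $F_1$ shows $F_1(\iota(1_R))$ is a left identity of the unital ring $\Gm_1$, hence equals $\iota'(1_R)$; and left $R$-linearity of $F_1$ then upgrades this to $F_1\circ\iota=\iota'$. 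One small clarification worth making: the identities $\iota(r)=r\cdot\iota(1_R)$ and $\iota'(r)=r\cdot\iota'(1_R)$ are most cleanly justified by the fact that $\iota$ and $\iota'$ are themselves $R$-bimodule isomorphisms (this is part of the data provided by Proposition~\ref{pcgpeanelcomutativocom1} and is how the definition of morphism introduces them), rather than by any claim that the bimodule structure on the identity component is ``inherited from the ring multiplication''; with that phrasing fixed, the argument is complete.
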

The following result shall be needed in the sequel.

\begin{pro}\label{isomorfismoT}\cite[Corollary 3]{miyashita1973exact},\cite[Proposition 1.3]{el2012invertible} Let $S$ be a ring and  $M$ and $N$ be $S$-bimodules. Suppose that $M|S$ and $N|S$,  and let $f_i:M\rightarrow S$, $g_i:S\rightarrow M$,  $1\leq i\leq n$, two  $S$-bilinear maps,     such that  $\sum\limits_{i=1}^ng_if_i={\rm id}_M.$ Then the function,
$$T_{M,N}:  M\ot_SN \ni m\ot_S n  \mapsto  \dsum_{i=1}^nf_i(m)n\ot_S g_i(1)\in  N\ot_SM$$   is an $S$-bimodule isomorphism. If $N$ is $S$-central, then  $T_{M,N}:  M\ot_SN \ni m\ot_S n  \mapsto n\ot_S m\in  N\ot_SM.$  \end{pro}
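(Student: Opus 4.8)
The plan is to prove Proposition~\ref{isomorfismoT} by first constructing the candidate map $T_{M,N}$ explicitly and verifying it is well-defined and $S$-bilinear, and then producing an inverse using the dual data coming from $N|S$. The hypothesis $M|S$ gives us the bilinear maps $f_i\colon M\to S$ and $g_i\colon S\to M$ with $\sum_i g_i f_i=\mathrm{id}_M$, which encode $M$ as a direct summand of a free bimodule $S^{(n)}$; the symmetric hypothesis $N|S$ furnishes analogous data $f'_j\colon N\to S$, $g'_j\colon S\to N$ with $\sum_j g'_j f'_j=\mathrm{id}_N$. First I would check that the formula $T_{M,N}(m\ot_S n)=\sum_{i=1}^n f_i(m)n\ot_S g_i(1)$ respects the tensor relation $mr\ot_S n=m\ot_S rn$ over $S$; this uses the $S$-bilinearity of each $f_i$ (so that $f_i(mr)=f_i(m)r$) together with the middle-linearity of $\ot_S$, and a parallel computation shows $T_{M,N}$ is itself a morphism of $S$-bimodules for the outer actions.

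Next I would build the inverse map. The natural candidate is $T_{N,M}\colon N\ot_S M\to M\ot_S N$ defined symmetrically by $T_{N,M}(n\ot_S m)=\sum_j f'_j(n)m\ot_S g'_j(1)$. The core of the argument is then the verification that $T_{N,M}\circ T_{M,N}=\mathrm{id}_{M\ot_S N}$ and $T_{M,N}\circ T_{N,M}=\mathrm{id}_{N\ot_S M}$. Here the computation collapses precisely because of the defining relations $\sum_i g_i f_i=\mathrm{id}_M$ and $\sum_j g'_j f'_j=\mathrm{id}_N$: after expanding the double sum one recognizes expressions of the form $\sum_i g_i(1)f_i(\cdot)$ and $\sum_j g'_j(1)f'_j(\cdot)$ which telescope back to the identity once one moves the scalars $f_i(m)$, $f'_j(n)\in S$ across the tensor symbol using middle-linearity. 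I would carry out one of the two composites in detail and remark that the other is entirely analogous by the symmetry of the hypotheses.

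The main obstacle will be the bookkeeping in the composite calculation: one must be careful about which arguments the $S$-scalars are absorbed into and in what order, since $S$ is noncommutative and the maps $f_i,g_i$ are only $S$-\emph{bilinear}, so scalars cannot be moved freely but only passed across $\ot_S$ or pulled out of the bilinear maps on the correct side. Keeping track of left versus right module structures throughout the two displayed composites is where errors most easily creep in, and it is the step that genuinely requires the relation $\sum_i g_i f_i=\mathrm{id}_M$ rather than just the existence of the $f_i,g_i$.

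Finally, for the last assertion, when $N$ is $S$-central the two-sided $S$-action on $N$ coincides, so $g_i(1)\in M$ can be recombined: using $\sum_i g_i(1)\ot_S$ applied after recognizing that $f_i(m)n=n f_i(m)$ may be transported, the sum $\sum_i f_i(m)n\ot_S g_i(1)$ simplifies to $n\ot_S\sum_i f_i(m)g_i(1)=n\ot_S\bigl(\sum_i g_i f_i\bigr)(m)=n\ot_S m$. Thus in the central case $T_{M,N}$ reduces to the plain flip $m\ot_S n\mapsto n\ot_S m$, which I would record as the concluding line of the proof. I would note that this proposition is quoted from \cite{miyashita1973exact} and \cite{el2012invertible}, so it suffices to indicate the construction and the key cancellation rather than reprove every detail.
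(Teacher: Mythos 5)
Your proposal is mathematically sound, but it is worth noting that the paper itself offers no proof of this proposition at all: it is quoted verbatim from the literature, with the references \cite[Corollary 3]{miyashita1973exact} and \cite[Proposition 1.3]{el2012invertible} standing in for an argument. So your route is necessarily different --- you reconstruct a self-contained verification where the paper simply cites. Your reconstruction is correct: well-definedness and bilinearity of $T_{M,N}$ follow exactly as you say from $S$-bilinearity of the $f_i,g_i$ and middle-linearity of $\ot_S$; the hypothesis $N|S$ does indeed produce dual data $f'_j\colon N\to S$, $g'_j\colon S\to N$ with $\sum_j g'_jf'_j=\mathrm{id}_N$ (this is the same splitting-of-$S^{(k)}$ fact the paper invokes elsewhere via \cite[Remark 2.8]{DoRo}); and the composite collapses as you predict, e.g.
$$T_{N,M}\bigl(T_{M,N}(m\ot_S n)\bigr)=\sum_{i,j}f_i(m)f'_j(n)\,g_i(1)\ot_S g'_j(1)=\sum_{i,j}g_i\bigl(f_i(m)\bigr)f'_j(n)\ot_S g'_j(1)=\sum_j m\ot_S g'_j\bigl(f'_j(n)\bigr)=m\ot_S n,$$
where each step uses only that $g_i$ and $g'_j$ are bimodule maps (so $s\,g_i(1)=g_i(s)=g_i(1)\,s$) and the relations $\sum_i g_if_i=\mathrm{id}_M$, $\sum_j g'_jf'_j=\mathrm{id}_N$; the central case likewise reduces to the flip exactly as you compute. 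What each approach buys: the citation keeps the paper short and defers to Miyashita's original (stated in the more general setting of invertible bimodules/rings with local units in the second reference), while your argument makes transparent that the only role of $M|S$ and $N|S$ is to supply the two dual-basis systems --- the one for $M$ being handed to you in the hypotheses, the one for $N$ being needed solely to write down the inverse $T_{N,M}$. One small caution: your phrase ``expressions of the form $\sum_i g_i(1)f_i(\cdot)$'' is loose about the side on which scalars sit; in the noncommutative setting the collapse works because $g_i(1)$ satisfies $s\,g_i(1)=g_i(1)\,s$ for all $s\in S$, a consequence of bilinearity that is worth stating explicitly rather than leaving implicit.
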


Let $\C(\T/R)$ be  set of isomorphism classes $[\D(\Gamma)]$ of the generalized crossed products $\D(\Gamma),$ where $\Gamma :G \to  \Pics (R)$ are partial representations 
such that
\begin{equation}\label{eq:Gamma}   
 \Gamma_g|(D_g)_{g^\m} \,\,\,\,  \mbox{and} \,\,\,\,  \Gamma_g\ot_R\Gamma_{g^\m}\simeq D_g,  \;\;\;  \mbox{as} \; R\mbox{-bimodules},
\end{equation}
for all $g  \in G.$ Clearly, $\Gamma$ is a unital partial representation.
We recall from   \cite[Theorem 4.1]{DoRo} that
$\C(\T/R) $  is a group with product.
\begin{equation}\label{prodc}
[\D(\Gamma)] [\D(\Omega)]=\left[\bigoplus_{g\in G}\Gamma_g\ot _R(D_{g^\m})_g \ot_R\Omega_g\right],
\end{equation}
where the factor set for the unital partial  representation $\Lambda: G\ni g\mapsto [\G_g\ot_R(D_{g^\m})_g\ot_R\Om_g]\in \Pics(R),$
is determined in \cite[Lemma 3.18]{DoRo} as follows:
Let
	\begin{equation*}
		f^\Gm=\{f^\Gm_{g,h}:\Gm_g\ot_R \Gm_h\to 1_g\Gm_{gh}, g, h\in G\}, \ f^\Om=\{f^\Om_{g,h}:\Om_g\ot_R \Om_h\to 1_g\Om_{gh}, \ g, h\in G \}
	\end{equation*}
	be factor sets for  $\Gm$ and $\Om$, respectively,  then a factor set for  $\Lambda$ is given by  the family of $R$-bimodule isomorphisms  
	$f^\Lambda=\{f_{g,h}^\Lambda:\Lambda_g\ot_R \Lambda_h\to 1_g\Lambda_{gh}, \ g, h\in G \}$ defined by 
	
	\begin{equation}\label{fsl}
		\xymatrix@C=1.0cm{  \Gamma_g\ot_R \T_{g^\m}\ot_R \Omega_g\ot_R \Gamma_h\ot_R \T_{h^\m}\ot_R \Omega_h \ar[rd]^{\ \ \ \ \ \  \ \ \ \ \Gamma_g\ot T_{\T_{g^\m}\ot\Omega_g,\Gamma_h\ot \T_{h^\m}}\ot \Om_h}\ar@/_1.7cm/@{-->}[rddd]_{f^{\Lambda}_{g,h}} & & \\
			&  \Gamma_g\ot_R\Gamma_h\ot_R \T_{h^\m} \ot_R \T_{g^\m}\ot_R \Omega_g \ot_R\Omega_h\ar[d]^{f_{g,h}^\Gamma\ot f_{h^{\m},g^\m}^\T\ot f_{g,h}^\Omega} \\
			& 1_g\Gamma_{gh}\ot_R 1_{y^\m}\T_{(gh)^\m}\ot_R 1_g\Omega_{gh}\ar@{=}[d]  \\
			&  1_g\Gamma_{gh}\ot_R \T_{(gh)^\m}\ot_R \Omega_{gh},  }
\end{equation}
where $T_{-,-}$ is the   isomorphism from Proposition~\ref{isomorfismoT}.  

Let $f\in Z^1(G,\al^*,\Pics _R (R))$.  It follows by   \cite[Lemma 6.3]{DoPaPi}  that  $\Gamma=f\T$, is a partial  representation with an $R$-bimodule isomorphism   
\begin{equation}\label{prop4}\Gm_g\ot_R\Gm_{g^\m}\simeq D_g, \,\,  \mbox{for every} \ g\in G. \end{equation} 

 However,  the map   $\Gamma$ may not have an associated  factor set, because in general the diagram (6.9) in \cite{DoPaPi} is not commutative. The following result characterizes the group  $\C(\T/R)$ in terms of the cocylces  $f\in Z^1(G,\al^*,\Pics _R (R)),$ for which $\Gamma=f\T$ is endowed with a factor set.

\begin{teo}\label{prop:grupoC} $\C(\T/R)=\{[\D(\Gamma)]: \exists  f \in Z^1(G,\al^*,\Pics _R (R))\; \text{such that} \;  \Gamma=f\T \}$.	
\end{teo}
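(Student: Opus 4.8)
My plan is to prove the set equality by two inclusions, using throughout the description $\al^*_g([P])=\T(g)[P]\T(g^\m)$ of the partial action on $\Pics_R(R)$ that is read off from the isomorphism \eqref{cong} of Example~\ref{pacal}, together with the partial-representation identity $\Gamma(g)\Gamma(h)=\e_g\Gamma(gh)$ from \eqref{prop1} (where $\e_g=\Gamma(g)\Gamma(g^\m)=[D_g]$ by \eqref{eq:Gamma}).

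\emph{The inclusion $\supseteq$.} Let $\Gamma=f\T$ with $f\in Z^1(G,\al^*,\Pics_R(R))$ and $\D(\Gamma)$ a partial generalized crossed product. By \cite[Lemma~6.3]{DoPaPi}, $\Gamma$ is a partial representation with $\Gamma_g\ot_R\Gamma_{g^\m}\simeq D_g$, which is the second condition in \eqref{eq:Gamma} (see \eqref{prop4}). For the first condition, note $f(g)=[P_g]$ with $P_g\in\Pic(D_g)$ a rank-$1$ projective $D_g$-module, hence a direct summand of $D_g^{(n)}$ for some $n$; tensoring with $\T(g)=[(D_g)_{g^\m}]$ shows that $\Gamma_g\simeq P_g\ot_R(D_g)_{g^\m}$ is a direct summand of $((D_g)_{g^\m})^{(n)}$, so $\Gamma_g\mid(D_g)_{g^\m}$ and $[\D(\Gamma)]\in\C(\T/R)$.

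\emph{The inclusion $\subseteq$.} Given $[\D(\Gamma)]\in\C(\T/R)$, the idea is to divide $\Gamma$ by $\T$: I set $f(g):=\Gamma(g)\T(g^\m)=[\Gamma_g\ot_R(D_{g^\m})_g]$. First I would check $f$ is a $1$-cochain, i.e. $f(g)\in\Pic(D_g)=\U(\mathcal{X}_g)$. The key observation is that right multiplication by $1_{g^\m}$ is the identity on $\Gamma_g$ (since $\Gamma_g\mid(D_g)_{g^\m}$ and $d* 1_{g^\m}=d\al_g(1_{g^\m})=d$), whence $\Gamma_g\ot_R D_{g^\m}\simeq\Gamma_g$; combined with \eqref{isot} (for $g^\m$) and \eqref{eq:Gamma} this gives $f(g)\,[(D_g)_{g^\m}\ot_R\Gamma_{g^\m}]\simeq[\Gamma_g\ot_R\Gamma_{g^\m}]=[D_g]$, so $f(g)$ is invertible in $\mathcal{X}_g$. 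The same computation, rewritten as $f(g)\T(g)=\Gamma(g)\T(g^\m)\T(g)=\Gamma(g)[D_{g^\m}]\simeq[\Gamma_g]$, proves $\Gamma=f\T$; hence $[\D(\Gamma)]$ will lie in the right-hand set as soon as $f$ is shown to be a cocycle.

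\emph{The cocycle condition and the main obstacle.} It remains to prove $f\in Z^1(G,\al^*,\Pics_R(R))$, that is \eqref{z1}: $\al^*_g(f(h)[D_{g^\m}])\,f(g)=f(gh)[D_g]$. I plan to extract this from the single relation $\Gamma(g)\Gamma(h)=[D_g]\Gamma(gh)$. Substituting $\Gamma=f\T$ and using $\T(g)[P]=\al^*_g([P][D_{g^\m}])\T(g)$ (a rearrangement of \eqref{cong}, valid because $\T(g)$ is fixed by right multiplication by $1_{g^\m}$) together with $\T(g)\T(h)=[D_g]\T(gh)$, the left-hand side $f(g)\T(g)f(h)\T(h)$ rewrites as $f(g)\,\al^*_g(f(h)[D_{g^\m}])[D_g]\,\T(gh)$, while the right-hand side is $[D_g]f(gh)\T(gh)$. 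Multiplying on the right by $\T((gh)^\m)$ to cancel $\T(gh)$, and using that $\al^*_g(f(h)[D_{g^\m}])$ is supported on $1_g1_{gh}$ whereas $f(g)$ and $f(gh)$ are supported on $1_g$ and $1_{gh}$, the superfluous idempotents are absorbed and exactly \eqref{z1} remains. I expect the main difficulty to be precisely this bookkeeping of the central idempotents $1_g,1_{g^\m},1_{gh}$ and the legitimacy of cancelling $\T(gh)$ at the level of isomorphism classes, since here the partial (rather than global) nature of $\al$ makes the supports of the modules interact in an essential way.
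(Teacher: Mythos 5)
Your proposal is correct and follows essentially the same route as the paper's proof: for $\supseteq$ both arguments tensor the divisibility of $f(g)$ (by $R$, resp.\ by $D_g$) with $\T(g)$, and for $\subseteq$ both define $f(g)=\Gamma(g)\T(g^\m)=[\Gamma_g\ot_R(D_{g^\m})_g]$, establish its invertibility via \eqref{isot} and \eqref{eq:Gamma}, and verify \eqref{z1} by the identical monoid manipulation using \eqref{cong} and \eqref{prop1} — your ``cancellation'' of $\T(gh)$ is exactly the paper's legitimate step of multiplying on the right by $\T((gh)^\m)$ and absorbing the resulting idempotents. The only point you leave implicit is that $M_g=\Gamma_g\ot_R(D_{g^\m})_g$ is a \emph{central} $R$-bimodule (needed to place $f(g)$ in $\Pics_R(R)$, not merely to invert it over $D_g$); the paper gets this from the same divisibility mechanism you invoke, namely that the identity $rv_g=v_g\al_{g^\m}(r1_g)$, valid in $(D_g)_{g^\m}$ by \eqref{reux}, transfers to the direct summand $\Gamma_g$.
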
 
\begin{dem}
	Let  $\D(\Gamma)$ be a partial generalized crossed  product, and let   $f \in Z^1(G,\al^*,\Pics _R (R))$ such that  $\Gamma=f\T.$ Write $\Gamma(g)=[\Gamma_g]$ and $f(g)=[M_g]$, for any $g\in G$. 
	Since $f(g)\in \Pics _R (R),$ the $R$-module $M_g$ is projective and finitely generated and, consequently, $M_g|R$ as $R$-bimodules. 
	Then, 
	$$\Gamma_g= (M_g\ot _R  (D_g)_{g^\m})  | (R\ot _R (D_g)_{g^\m})  \simeq (D_{g})_{g^\m}, \ \mbox{for every} \ g\in G.$$
	By \eqref{prop4} we have that  $\Gamma_g\ot _R\Gamma_{g^\m}\simeq D_g$, for each $g\in G$. We conclude that $[\D(\Gamma)] \in \C(\T/R).$

Conversely, take  $[\D(\Gamma)] \in \C(\T/R)$. Since  $\Gamma_g|(D_g)_{g^\m},$ it follows by \cite[Remark 2.8]{DoRo} that there are $R$-bilinear  maps  $f_i:\Gamma_g\to(D_g)_{g^\m}$ and  $f'_i:(D_g)_{g^\m}\to \Gamma_g, \,i=1,2,...,n$,  such that $\dsum_{i=1}^n f'_if_i=Id_{\Gamma_g}$. This implies,
	\begin{equation}
	rv_g=v_g\al_{g^\m}(r1_g), \ \ \mbox{for all} \ v_g \in \Gamma_g \ \mbox{and}\ r \in R. \label{vxeremCcasogaloiscomutativo}
	\end{equation}
	Indeed, 
	\begin{eqnarray*}
		rv_g & = & r\dsum_{i=1}^n f'_i(f_i(v_g)) = \dsum_{i=1}^n f'_i(rf_i(v_g))\nonumber \\
		& \stackrel{(\ref{reux})}{=} & \dsum_{i=1}^n f'_i(f_i(v_g)*\al_{g^\m}(r1_{g}))\nonumber \\
		& = & \dsum_{i=1}^n f'_i(f_i(v_g))\al_{g^\m}(r1_{g})\nonumber\\
		& = & v_g\al_{g^\m}(r1_{g}). \label{revx}
	\end{eqnarray*}
	Set $M_g=\Gamma_g\ot_R(D_{g^\m})_{g}$, then for  $v_g \in \Gamma_g$ and $u_{g^\m}\in (D_{g^\m})_g$ we see that 
	\begin{equation*}
	(v_g\ot _R u_{g^\m})*r=v_g\ot_R u_{g^\m}\al_{g^\m}(r1_g)=v_g\al_{g^\m}(r1_g)\ot_R u_{g^\m}\stackrel{(\ref{vxeremCcasogaloiscomutativo})}{=} rv_g\ot _Ru_{g^\m},
	\end{equation*}
	for any $r\in R$. Hence,  $M_g$ is a central $R$-bimodule. Analogously, it is easy to see using  	\eqref{vxeremCcasogaloiscomutativo} that $M'_g=(D_{g})_{g^\m}\ot_R \Gamma_{g^\m}$ is also a  central $R$-bimodule.

	For $g \in G$, we shall show that $[M_g]\in   \Pic_R(D_g).$  Due to \eqref{isot} and \eqref{prop4} the following  $R$-bimodule isomorphisms hold:
	\begin{eqnarray*}
		M_g\ot_R M'_g & = & \Gamma_g\ot_R (D_{g^\m})_g\ot _R(D_{g})_{g^\m}\ot_R \Gamma_{g^\m}\\
		&\simeq  & \Gamma_g\ot_R D_{g^\m}\ot _R\Gamma_{g^\m}\\
		& \simeq & \Gamma_g\ot _R\Gamma_{g^\m}\simeq D_g. 
	\end{eqnarray*}
	Analogously, 
	\begin{eqnarray*}
		M'_g\ot_R M_g & = & (D_{g})_{g^\m}\ot_R \Gamma_{g^\m}\ot_R \Gamma_g\ot_R (D_{g^\m})_g\\
		& \simeq & (D_{g})_{g^\m}\ot_R D_{g^\m}\ot_R (D_{g^\m})_g\\
		& \simeq & (D_{g})_{g^\m}\ot_R (D_{g^\m})_g\simeq D_g.\\
	\end{eqnarray*}
	Then $[M_g]\in \Pic_R(D_g),$ and thus $[M_g]\in \Pics_R(R),$ thanks to \eqref{upic}.  Define,
	$$
	f: G \ni g\mapsto [M_g] \in \Pics _R (R),$$
then $f(g)\in \U(\mathcal{X}_g),$ for all $g\in G,$ (see Remark \ref{pacal}). We shall show that   $f \in Z^1(G,\al^*,\Pics_R(R)).$ First of all, observe that in $\Pics(R)$ we have
\begin{equation}\label{prop2}f(g)=[M_g]=[M_g][D_g]\stackrel{\eqref{isot}}=[M_g]\T(g)\T(g^\m)=[\Gamma_g]\T(g^\m)= \Gamma(g)\T(g^\m) .\end{equation}
 Hence,  for  $g,h \in G$ we get
	\begin{eqnarray*}
		f(g)\al_g^*(f(h)[D_{g^\m}]) & \stackrel{\eqref{cong},\eqref{prop2}}= & \Gamma(g)\T(g^\m)\T(g)f(h)\T(g^\m)\\
		&\stackrel{\eqref{prop2}} = & \Gamma(g)\T(g^\m)\T(g)\Gamma(h)\T(h^\m)\T(g^\m)\\
		& \stackrel{\eqref{prop1}}= & \Gamma(g)[D_{g^\m}]\Gamma(h)[D_{h^\m}]\T((gh)^\m)\\
		&\stackrel{\eqref{eq:Gamma}} = & \Gamma(g)\Gamma(h)\T((gh)^\m)\\
		& = & [D_g]\Gamma(gh)\T((gh)^\m)\\
		& \stackrel{\eqref{prop2}} = & [D_g]f(gh).
	\end{eqnarray*}
	Hence by \eqref{z1} we have $f \in Z^1(G,\al^*,\Pics_R(R))$.  Finally, take  $g\in G,$ then $$f(g)\T(g)=\Gamma(g)\T(g^\m)\T(g)=\Gamma(g) [D_{g^\m}]=\Gamma(g).$$ Therefore, $\Gamma=f\T$, for some $f \in Z^1(G,\al^*,\Pics_R(R)),$ as desired.
	\end{dem}
	
Let $\Gamma:G\to \Pics(R)$ be a unital partial representation such that $[\D(\Gamma)]\in \C(\T/R).$ Then, by Theorem~\ref{prop:grupoC}, the partial representation  $\Gamma $ is of the form $  f \T$ for some $1$-cocycle 
 $f \in Z^1(G,\al^*,\Pics _R (R)),$ and since $f(h) \in \Pic (D_h),$ we have that  
$$\Gamma_h\ot_R \T_{h^\m} = f(h) \otimes _R \T _h  \ot_R \T_{h^\m} 
\simeq f(h) \ot _R D_h \simeq f(h),$$ keeping in mind the unital condition \eqref{isot} on $\T .$ This implies that  $\Gamma_h\ot_R \T_{h^\m}$ is a central $R$-bimodule, and the map $ T_{\T_{g^\m}\ot\Omega_g,\Gamma_h\ot \T_{h^\m}}$ takes the simple form given in Proposition~\ref{isomorfismoT}. Then by \eqref{fsl} we have that the factor set for the unital partial  representation $\Lambda: G\ni g\mapsto [\G_g\ot_R(D_{g^\m})_g\ot_R\Om_g]\in \Pics(R),$ is given by the family
\begin{equation}\label{fla}f_{g,h}^\Lambda(d_g\ot_R d'_{g^{\m}}\ot_R u_g\ot_R e_h\ot_R e'_{h^{\m}}\ot_R v_h)= f_{g,h}^\G(d_g\ot_Re_h)\ot_Re'_{h^{\m}}\al_{h^{\m}}(d'_{g^{\m}}1_h)\ot_Rf_{g,h}^\Om(u_g\ot_Rv_h),
\end{equation}
for every $d_g\ot_R d'_{g^{\m}}\ot_R u_g\ot_R e_h\ot_R e'_{h^{\m}}\ot_R v_h\in  \Gamma_g\ot_R \T_{g^\m}\ot_R \Omega_g\ot_R \Gamma_h\ot_R \T_{h^\m}\ot_R \Omega_h$ and $g,h\in G.$ Moreover, observe that 
$[(f \T )_g]   \in \Pics_{R^\alpha}(R),$ as $[f(g)] \in \Pics _R (R)$ and $[ \T _g] \in \Pics_{R^\alpha}(R).$

\subsection{The  Brauer group}\label{sec:Brauer}

We recall the notion of the  Brauer group $B(R)$ of a commutative ring $R,$ which was introduced  in   \cite{auslander1960brauer},   based on the concept  of a  separable $R$-algebra.  An   $R$-algebra $A$ is said to be {\it separable} over $R$ if $A$ is a projective module over its enveloping algebra 
$A^e = A\otimes _R A^{\op},$ where $ A^{\op}$ stands for the opposite algebra of $A.$ Moreover, $A$ is called an {\it Azumaya} $R$-algebra if $A$ is  separable  over $R$ and  $\Z(A)=R1_A.$ Two Azumaya $R$-algebras $A$ and $B$ are said to be equivalent  if there exist faithful f.g.p. $R$-modules $P$ and $Q$ such that
 the $R$-algebras $ A \otimes _R  {\rm End}_R(P) $ and $  B \otimes   _R   {\rm End}_R\,(P) $ are isomorphic.  We denote by $\llbracket  A \rrbracket $ the equivalence class of $A.$
Then the Brauer group $\B(R)$ of $R$ consists of the equivalence classes $\llbracket  A \rrbracket $ of Azumaya $R$-algebras $A,$ with the group operation induced  by $\otimes _R.$   Given a commutative $R$-algebra $S$ the map   
$\B(R) \ni   \llbracket A\rrbracket  \mapsto  \llbracket A \otimes_R S\rrbracket \in \B(S),$ is a  group homomorphism, whose kernel, denoted by  $B(S/R)$ is  called the {\it relative Brauer group}.

{\it In all what follows we suppose that $G$ is a finite group, and $R\supseteq R^\alpha$ is a partial Galois extension}.

\begin{obs}\label{azuc} Let $f\in Z^2(G,\alpha, R).$ Then it follows by Remark~\ref{coco} and Proposition~\ref{coiso} that there exists a normalized cocycle $f'\in Z^2(G,\alpha, R)$ cohomologous to $f,$ and there is a  $R^\alpha$-algebra isomorphism $ R\star_{\al,f}G\simeq  R\star_{\al,f'}G.$ Since $R\supseteq R^\alpha$ is a partial Galois extension, it follows by \cite[Theorem 2.4]{PS} that $\alpha$ is $\omega$-outer in $R$ (see \cite[p. 1096]{PS}) which in turn by \cite[Proposition 3.3]{PS} and \cite[(vi) Lemma 2.1]{PS} implies that  $R\star_{\al,f'}G$ is Azumaya $R^\alpha$-algebra. Therefore  $R\star_{\al,f}G$ is  also Azumaya $R^\alpha$-algebra. 
\end{obs}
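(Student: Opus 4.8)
The plan is to reduce the assertion for an arbitrary cocycle to the case of a normalized one, and then to transfer the Azumaya property through an algebra isomorphism. First I would invoke Remark~\ref{coco} to find a normalized $2$-cocycle $f'\in Z^2(G,\alpha,R)$ cohomologous to $f,$ so that $f=f'(\delta^1\varepsilon)$ for some $\varepsilon\in C^1(G,\alpha,R).$ Proposition~\ref{coiso} then supplies an isomorphism $R\star_{\al,f}G\simeq R\star_{\al,f'}G$ of (a priori non-necessarily associative) $R^\al$-algebras, which by Corollary~\ref{cor:assoc} are in fact associative $R^\al$-algebras. Since the Azumaya condition—separability over the base together with the center being exactly $R^\al\cdot 1$—is preserved under $R^\al$-algebra isomorphisms, it suffices to establish the claim for the normalized cocycle $f'.$

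Next I would bring in the partial Galois hypothesis. The crucial structural input is that a partial Galois extension $R\supseteq R^\al$ forces the partial action $\alpha$ to be $\omega$-outer in $R,$ which is exactly \cite[Theorem 2.4]{PS}. This is the condition one needs in order to control the center of the crossed product and to guarantee the existence of a separability element.

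With $\alpha$ being $\omega$-outer and $f'$ normalized, I would then apply \cite[Proposition 3.3]{PS} together with \cite[(vi) Lemma 2.1]{PS} to conclude that $R\star_{\al,f'}G$ is an Azumaya $R^\al$-algebra; concretely, these results yield $\Z(R\star_{\al,f'}G)=R^\al\cdot 1$ and the separability of $R\star_{\al,f'}G$ over $R^\al.$ Transporting along the isomorphism of the first paragraph, I would obtain that $R\star_{\al,f}G$ is itself Azumaya over $R^\al,$ as desired.

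The hard part will be the normalized case itself, namely the passage from $\omega$-outerness to the Azumaya property: this is where the partial Galois coordinates $x_i,y_i$ must be assembled into an explicit separability idempotent over $R^\al,$ and where $\omega$-outerness is precisely what pins the center down to $R^\al.$ Rather than re-deriving these two facts, I would lean on \cite{PS}, noting that the reduction via Remark~\ref{coco} and Proposition~\ref{coiso} is exactly what lets the normalizing hypothesis required there be assumed without loss of generality.
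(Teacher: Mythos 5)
Your proposal is correct and follows essentially the same route as the paper's own argument: reduce to a normalized cocycle via Remark~\ref{coco} and Proposition~\ref{coiso}, invoke \cite[Theorem 2.4]{PS} for $\omega$-outerness from the partial Galois hypothesis, and then apply \cite[Proposition 3.3]{PS} and \cite[(vi) Lemma 2.1]{PS} to get the Azumaya property for the normalized crossed product, transporting it back along the isomorphism. The only additions you make — citing Corollary~\ref{cor:assoc} for associativity and noting explicitly that the Azumaya condition is invariant under $R^\al$-algebra isomorphism — are harmless elaborations of steps the paper leaves implicit.
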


 The following facts about the  relative Brauer group $\B(R/R^\al)$ will be of interest for us.
\begin{obs}\label{brr}
Take $\llbracket A\rrbracket \in \B(R/R^\al).$ Since $R$ is a f.g.p. $R^\alpha$-module, it follows   by  \cite[Theorem 5.7]{auslander1960brauer} that  we may assume that $R$ is a maximal commutative $R^\alpha$-subalgebra of  $A.$  Moreover, if $\llbracket A\rrbracket \in \B(R^\al)$ and $A$ contains $R$ as a maximal commutative $R^\alpha$-subalgebra. Since by \cite[Theorem 4.2]{dokuchaev2007partial}  that $R\subseteq R^\alpha$ is separable, then by   \cite[Theorem 5.6]{auslander1960brauer}  we get that 
 $\llbracket A\rrbracket  \in  \B(R/ R^\alpha ).$ 
\end{obs}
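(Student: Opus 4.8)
The goal is to prove the two assertions that together identify $\B(R/R^\al)$ with the set of equivalence classes possessing a representative Azumaya $R^\al$-algebra that contains $R$ as a maximal commutative subalgebra. Throughout I would use that, by definition, $\B(R/R^\al)$ is the kernel of the scalar-extension homomorphism $\B(R^\al)\to\B(R)$, $\llbracket A\rrbracket\mapsto\llbracket A\ot_{R^\al}R\rrbracket$, so that $\llbracket A\rrbracket\in\B(R/R^\al)$ says precisely that $R$ splits $A$.

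First I would treat the forward direction. Given $\llbracket A\rrbracket\in\B(R/R^\al)$, the algebra $A$ is split by $R$. The standing partial Galois hypothesis on $R^\al\subseteq R$, together with $R^\al\subseteq\Z(R)=R$ (as $R$ is commutative), guarantees by Remark~\ref{jjota} (that is, \cite[Theorem 4.1]{dokuchaev2007partial}) that $R$ is a f.g.p.\ $R^\al$-module. This is exactly the hypothesis under which the splitting-ring result \cite[Theorem 5.7]{auslander1960brauer} applies, producing an Azumaya $R^\al$-algebra $A'$ with $\llbracket A'\rrbracket=\llbracket A\rrbracket$ and having $R$ as a maximal commutative $R^\al$-subalgebra. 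Thus every class in $\B(R/R^\al)$ admits such a representative, and for the purposes of the later arguments one may replace $A$ by $A'$.

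For the converse I would start from an Azumaya $R^\al$-algebra $A$ that already contains $R$ as a maximal commutative $R^\al$-subalgebra and show $\llbracket A\rrbracket\in\B(R/R^\al)$. Here the decisive extra input is separability: by \cite[Theorem 4.2]{dokuchaev2007partial} a partial Galois extension $R^\al\subseteq R$ is separable, so $R$ is a separable $R^\al$-subalgebra of $A$. Feeding a maximal commutative separable subalgebra into \cite[Theorem 5.6]{auslander1960brauer} yields that $R$ splits $A$, that is, $\llbracket A\ot_{R^\al}R\rrbracket$ is trivial in $\B(R)$; hence $\llbracket A\rrbracket\in\B(R/R^\al)$.

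Both directions are essentially an assembly of the classical Auslander--Goldman machinery, so I do not expect a genuinely hard analytic step; the point requiring the most care is checking that the hypotheses of \cite[Theorem 5.6]{auslander1960brauer} and \cite[Theorem 5.7]{auslander1960brauer} are actually met in the partial setting. Concretely, the forward direction hinges on $R$ being finitely generated projective over $R^\al$ and the converse on $R$ being separable over $R^\al$; both are consequences of the assumption that $R^\al\subseteq R$ is a partial Galois extension, supplied respectively by Remark~\ref{jjota} and \cite[Theorem 4.2]{dokuchaev2007partial}. Once these two facts are in place, the conclusion follows directly from the cited Auslander--Goldman theorems.
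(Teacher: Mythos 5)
Your proposal is correct and follows essentially the same route as the paper: the forward direction is exactly the application of \cite[Theorem 5.7]{auslander1960brauer} using that $R$ is a f.g.p.\ $R^\al$-module (which, as you note, comes from the partial Galois hypothesis via Remark~\ref{jjota}), and the converse is exactly the application of \cite[Theorem 5.6]{auslander1960brauer} using separability of $R^\al\subseteq R$ from \cite[Theorem 4.2]{dokuchaev2007partial}. Your write-up is in fact slightly more careful than the paper's, since you make explicit the definition of $\B(R/R^\al)$ as a kernel and verify that the hypotheses of the two Auslander--Goldman theorems are met.
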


 Consider the $(R,A)$-bimodule
$_g(1_{g^\m}A)$ whose underlying set is $1_{g^\m}A$ with the actions defined by
\begin{equation}\label{aopb}r \cdot a = \alpha_{g^\m} (r1_{g}) a \;\;\; \mbox{and} \;\;\;  a \cdot a' = a a',  \end{equation}
where $r\in R, a \in 1_{g^\m}A,$ and $a' \in A.$ Consider also the following $(R,A)$-subbimodule of  $_g(1_{g^\m}A):$
\begin{equation}\label{defC}
C_g=\{a \in 1_{g^\m}A; \ \al_{g^\m}(r1_g)a=ar, \ \forall \ r \in R\}.
\end{equation}
notice that, $C_g$ is a central $R$-bimodule. Suppose that $\llbracket A\rrbracket\in \B(R/R^\al).$ By  \cite[Theorem 5.9]{DoPaPi} there is a group homomorphism  
\begin{equation}\label{fi5}\phi_5:B(R/R^\al)\ni \llbracket A\rrbracket\mapsto {\rm cls}(f_A)\in H^1(G,\al^*,\Pics_R(R)),\end{equation} where by
  \cite[Remark 5.3]{DoPaPi2} the map
 $f_A\in Z^1(G,\al^*,\Pics_R(R))$  can be chosen such that  $f_A(g)=[C_g],$ for all $g\in G.$  In particular, $[C_g]\in \Pic_R(D_g)$
and thus belongs  to $\Pics_R(R).$ Let $\Gm: G\ni g\mapsto f_A(g)\T(g)\in \Pics_{R^\alpha}(R).$ Write $\Gm(g)=[\Gm_g], $ $g\in G.$

Then  by \cite[Lemma 6.3]{DoPaPi} we have that  $\Gm$ is a unital partial representation with  $\Gm_g\ot _R \Gm_{g^\m}\simeq D_g.$  Moreover, since  $C_g|R ,$ the compatibility of $|$ with   tensor products implies  $\Gm_g|(D_g)_{g^\m}$,  for each  $g \in G.$  On the other hand, by the proof 
of \cite[Proposition 6.5]{DoPaPi2} we may assume without loss of generality that   $\Gamma _g  = C_g $ as sets, for any $g\in G,$ and the $R$-actions on $\Gamma _g$ are given by 
\begin{equation}\label{star}r \star a = \alpha_{g^\m} (r1_{g}) a =ar\;\;\; \mbox{and} \;\;\;  
a \star r = a \alpha_{g} (r1_{g^\m})=ra,  \end{equation}
where $a \in \Gamma _g=C_g, r \in R.$

In addition, 	
 $$
f_{g,h}: \Gm_g\ot _R \Gm_h\ni u_g\ot_R u_h  \mapsto  1_g\ot_R u_hu_g \in  D_g\ot _R \Gm_{gh} $$
is a factor set for  $\Gm  $ and  $\D(\Gm)=\bigoplus_{g\in G}\Gm_g$ is a partial generalized crossed product.
 Since $\Gamma = f_A \Theta $ and $f_A\in Z^1(G,\al^*,\Pics_R(R)),$ we have  by Theorem~\ref{prop:grupoC} 
that 
$[\D(\Gm)]\in C(\T/R)$. Furthermore,  recall from \cite[p. 218]{DoPaPi2} that  the multiplication in $\Delta (\Gamma )$ is given by 
\begin{equation} \label{prodg}u_g\stackrel{\Gm}{\circ} u_h    =  1_g \star (u_hu_g)=1_{h\m}1_{h\m g\m}u_hu_g=u_hu_g.\end{equation}
where $u_g \in \Gamma _g, u_h \in \Gamma _h.$

Given a ring  $A$   and a  nonempty subset $B$ of $A$, we denote by
 $$C_A(B)=\{a \in A: ax=xa \ \mbox{for all } x\in B\}$$ the {\it centralizer} of $B$ in $A$.  If $B$ is a commutative $R$-subalgebra of an $R$-algebra $A,$  it is well known that $B$ is a maximal commutative subalgebra if and only if $C_A(B)=B.$

\begin{teo}\label{Aisopcgp} Let $A$ be an Azumaya  $R^\al$-algebra, containing  $R$ as a maximal  commutative  subalgebra and  $\phi_5(\llbracket A\rrbracket )=[f_A],$  defined in \eqref{fi5}.
Then with the  above notation   we have that  $\D(\Gm)\simeq A^{\op}$ as $R^\al$-algebras.
\end{teo}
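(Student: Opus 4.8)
The plan is to take the obvious inclusion as the candidate map and verify it is an $R^\al$-algebra isomorphism onto $A^{\op}$. Concretely, I would define
$$\Psi\colon \D(\Gm)=\bigoplus_{g\in G}\Gm_g\longrightarrow A^{\op}, \qquad \Psi\Big(\sum_{g}u_g\Big)=\sum_g u_g,$$
where each $u_g\in\Gm_g=C_g$ is regarded as an element of $A$ via the inclusion $C_g\subseteq 1_{g^\m}A\subseteq A$. First I would check that $\Psi$ is a homomorphism of $R^\al$-algebras: multiplicativity is immediate from \eqref{prodg}, since $u_g\stackrel{\Gm}{\circ}u_h=u_hu_g$ computed in $A$ is exactly the product $\Psi(u_g)\Psi(u_h)$ taken in $A^{\op}$; and $R^\al$-linearity follows from $R^\al\subseteq\Z(A)$ together with the fact that for $\lambda\in R^\al\subseteq\Gm_1$ one has $\lambda\stackrel{\Gm}{\circ}u_g=u_g\lambda=\lambda u_g$. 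To see that $\Psi$ is unital I would invoke the hypothesis that $R$ is maximal commutative: by \eqref{defC} with $g=1$ one gets $\Gm_1=C_1=C_A(R)=R$, so the identity of $\Gm_1$ furnished by Proposition~\ref{pcgpeanelcomutativocom1} is precisely $1_A=1_{A^{\op}}$.

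It then remains to prove that $\Psi$ is bijective, i.e.\ that the sum $\sum_{g\in G}C_g$ inside $A$ is direct and equals $A$. For surjectivity I would use that $R\supseteq R^\al$ is a partial Galois extension with coordinates $x_i,y_i$ satisfying $\sum_i x_i\al_g(y_i1_{g^\m})=\delta_{1,g}$. Given $a\in A$, I would assemble components $a_g\in C_g$ out of $a$, the $x_i,y_i$, and the idempotents $1_g$, arranged so that the eigenrelation $\al_{g^\m}(r1_g)a_g=a_gr$ of \eqref{defC} holds and that $\sum_g a_g=a$, the collapse of the resulting double sum to $a$ being forced by the Galois identity above. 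For injectivity, suppose $\sum_g u_g=0$ with $u_g\in C_g$; multiplying on the right by $r\in R$ and using $u_gr=\al_{g^\m}(r1_g)u_g$ converts this into relations among the $u_g$ governed by the distinct partial ``characters'' $r\mapsto\al_{g^\m}(r1_g)$. Their independence is guaranteed by the $\omega$-outerness of $\al$ recorded in Remark~\ref{azuc}, and combined with the Galois coordinates it forces each $u_g=0$.

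An alternative route to bijectivity, which I would keep in reserve, is to note that both $\D(\Gm)$ and $A^{\op}$ are Azumaya $R^\al$-algebras: for $\D(\Gm)$ this follows by identifying it, through $\Gm=f_A\T$, with a partial crossed product $R\star_{\al,\tilde f}G$ and applying Remark~\ref{azuc}. A unital $R^\al$-algebra homomorphism between Azumaya algebras has kernel of the form $J\,\D(\Gm)$ for an ideal $J\subseteq R^\al$ with $J\neq R^\al$; localizing at each maximal ideal of $R^\al$, where the partial action trivializes and $R$ becomes Galois, one obtains $J=0$, and equality of $R^\al$-ranks then upgrades the injection to an isomorphism.

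I expect the genuine obstacle to be the decomposition $A=\bigoplus_{g\in G}C_g$, that is, establishing surjectivity and directness simultaneously in the partial setting. Unlike the global Galois case, the idempotents $1_g$ mean the ``eigenspaces'' $C_g$ sit inside the truncations $1_{g^\m}A$ rather than all of $A$, so both the construction of the components $a_g$ and the independence argument must be carried out compatibly with these idempotents; bookkeeping them correctly while invoking the partial Galois coordinates and $\omega$-outerness is where the real care is needed.
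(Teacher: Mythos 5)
Your candidate map and the easy verifications coincide with the paper's: the paper also takes the inclusion-induced map $\varphi:\bigoplus_{g\in G}\Gm_g\to A^{\op}$ and uses \eqref{prodg} to see it is an $R^\al$-algebra homomorphism. The genuine divergence is in bijectivity, and there your main route differs from the paper's. The paper never proves the global decomposition $A=\bigoplus_{g\in G}C_g$ directly; it localizes at each $\mfp\in{\bf Spec}(R^\al)$, where $\Pic(R)$ is trivial so each $C_g=D_{g^\m}w_g$ is free, produces $a_g\in A$ with $w_ga_g=1_{g^\m}$ and $a_gw_g=1_g$, extracts a unit-valued $2$-cocycle $\tau_{g,h}=a_ga_hw_{gh}\in\U(D_gD_{gh})$, proves $A\simeq R\star_{\al,\tau}G$ locally (controlling the kernel of $d_g\delta_g\mapsto d_ga_g$ by the Azumaya ideal correspondence, with Remark~\ref{azuc} supplying the Azumaya property of the crossed product), and then writes an explicit inverse of the localized $\varphi$. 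Your plan to get $A=\bigoplus_gC_g$ globally from the partial Galois coordinates is viable and arguably cleaner: the elements $e_g=\sum_i\al_{g^\m}(x_i1_g)\ot_{R^\al}y_i\in R\ot_{R^\al}R$ act on $A$ by $(u\ot v)\cdot a=uav$ (well defined since $R^\al\subseteq\Z(A)$); the coordinate identity yields $\sum_i\al_{h^\m}(x_i1_h)\al_{g^\m}(y_i1_g)=\delta_{g,h}1_{g^\m}$, so the $e_g$ are orthogonal idempotents with $e_g\cdot A=C_g$; and the bijectivity of the Galois map $R\ot_{R^\al}R\to\bigoplus_gD_g$ (part of the partial Galois package of \cite{dokuchaev2007partial}) gives $\sum_ge_g=1\ot1$, whence $A=\bigoplus_gC_g$ and $\varphi$ is bijective. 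Your route avoids localization and any auxiliary crossed-product structure; the paper's route produces the local cocycle $\tau$ explicitly, which it exploits to write down $\varphi_{loc}^{\m}$.

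Two points in your write-up are, however, wrong as stated. First, directness of $\sum_gC_g$ owes nothing to the $\omega$-outerness of Remark~\ref{azuc}: that remark serves only to show that partial crossed products $R\star_{\al,f}G$ are Azumaya, and no independence statement for the maps $r\mapsto\al_{g^\m}(r1_g)$ follows from it. What does the work is exactly the orthogonality identity above, which comes from the Galois coordinates alone; applying $e_h$ to $\sum_gu_g=0$ kills every summand except $u_h$. Since you also invoke the coordinates, the repair is to drop outerness and carry out this computation. Second, your reserve route has real gaps: $\D(\Gm)=\D(f_A\T)$ admits no global identification with a partial crossed product $R\star_{\al,\tilde f}G$, because $f_A$ takes values in $\Pics_R(R)$ rather than in units — such an identification exists only after localization, when each $C_g$ becomes free, which is precisely the paper's local step (the global Azumaya property of $\D(\Gm)$ is instead Proposition~\ref{CdentrodeB}); localizing at a prime of $R^\al$ does not make the partial action global or $R$ a Galois extension in the classical sense, since $R_\mfp$ is only semilocal and the idempotents $1_g$ persist; and "equal $R^\al$-ranks upgrade an injection to an isomorphism" is false for modules — one would need the commutant theorem for Azumaya subalgebras to conclude. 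None of this affects your main route, which, once the component construction sketched above is written out, gives a correct proof by a method genuinely different from the paper's.
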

\begin{dem} As above, we have that $\Gamma_g=C_g$ as sets. Then $\Gamma_g\subseteq A,$ for all $g\in G ,$ and we may consider the $R^\al$-map
\begin{equation}\label{fiazu} \varphi: \bigoplus_{g \in G}\Gm_g\ni u_g  \mapsto u_g \in A^{\op}, \end{equation}
	induced  by the  inclusion of  $\Gm_g$ in  $A$.
	Let  $u_g \in \Gm_g$ e $u_h \in \Gm_h$, then $\varphi(u_g\stackrel{\Gm}{\circ} u_h)  \stackrel{\eqref{prodg}}= u_h u_g,$
	which shows that  $\varphi$  is a homomorphism of $R^{\alpha}$-algebras.
	Localizing at  $\mathfrak p \in {\bf Spec}(R^\al)$ one may assume that  $R^\al$ is local. Then   $R$ is semilocal and $\Pic(R)$ is trivial. The fact that $D_g$ is a direct summand of  $R$, implies $\Pic_R(D_g)=\{1\}.$ Moreover, since   $f \in Z^1(G,\al^*, \Pics_R(R))$, we have  $f_A(g)=[C_g]\in \Pic_R(D_g)=\{1\}$ and $C_g\simeq D_g$, as $D_g$-modules. Let  $\gamma_g:D_g\to C_g$ be   a $D_g$-module  isomorphism 
	and denote $\gamma_g(1_g)=w_g \in C_g$. Given  $c_g \in C_g$, there exists $d \in D_g$ with
	\begin{equation*}
	c_g=\gamma_g(d)=d\star \gamma_g(1_g)=\gamma_g(1_g)d.
	\end{equation*}
	Thus 
	\begin{equation}\label{eq:locC_g}
	C_g=D_g\star w_g=w_gD_g.
	\end{equation}
	
	Furthermore, by \cite[p. 223]{DoPaPi2}

	there exists $a_g \in A$ such that
	\begin{equation}\label{agwg}
	w_ga_g=1_{g^\m} \;\;\;\; \mbox{and} \;\;\;\; a_gw_g=1_g.
	\end{equation}
	Given  $r \in R$ since $w_g\in C_g,$ we have that
	$
	w_g\al_g(r1_{g^\m})a_g  =  rw_ga_g=r1_{g^\m}
	 =  1_{g^\m}r = w_g(a_gr).
$
	Then the fact that  $w_g$ is a free basis for  $C_g$, implies
	\begin{equation}\label{agr}
	\al_g(r1_{g^\m})a_g=a_gr, \ \forall \ r \in R.
	\end{equation}
	Furthermore, observe that:
	\begin{eqnarray}
	a_ga_h1_l=1_g1_{gh}1_{ghl}a_ga_h. \label{agah1l}
	\end{eqnarray}
	Consider $\tau_{g,h}=a_ga_hw_{gh},$ we shall prove that $\tau_{g,h}\in R$ . For $r\in R$,  we have
	\begin{eqnarray*}
	\tau_{g,h}r & = & a_ga_hw_{gh}r\\
	&\stackrel{\eqref{defC}} = & a_ga_h\al_{h^\m g^\m}(r1_{gh})w_{gh}\\
	& \stackrel{\eqref{agr}}= & a_g\al_h(  1 _{h^\m } \al_{h^\m g^\m}(r1_{gh}))a_hw_{gh}\\ 
	& = & a_g\al_{g^\m}(r1_g)1_ha_hw_{gh}\\
	& \stackrel{\eqref{agr}}= & r1_ga_g1_ha_hw_{gh}\\
	& \stackrel{\eqref{agr}}= & ra_g1_{g^\m}1_ha_hw_{gh}\\
	& = & ra_g\al_{h}(1_{h^\m g^\m}1_{h^\m})a_hw_{gh}\\
	& \stackrel{\eqref{agr}}= & ra_ga_h1_{h^\m g^\m}w_{gh}\\
	& = & ra_ga_hw_{gh}.
	\end{eqnarray*}

	Therefore, $\tau_{g,h} \in C_A(R)=R$. Moreover,
	\begin{eqnarray*}
	1_g1_{gh}a_ga_hw_{gh}& = & \al_g(1_h1_{g^\m})a_ga_hw_{gh}\stackrel{\eqref{agr}}=a_g1_h1_{g^\m}a_hw_{gh}
	 =  a_g\al_h(1_{h^\m g^\m}1_{h^\m})a_hw_{gh}\\
	& \stackrel{\eqref{agr}}= & a_ga_h1_{h^\m g^\m}w_{gh}\stackrel{\eqref{defC}}=a_ga_hw_{gh}.
	\end{eqnarray*}
Hence, $\tau_{g,h}=a_ga_hw_{gh} \in D_gD_{gh}$. Now we check that $\tau_{g,h}$ has  an inverse in $ D_gD_{gh}$. Observe that
\begin{eqnarray*}
\tau_{g,h}(a_{gh}w_hw_g) & = & a_ga_hw_{gh}a_{gh}w_hw_g\\
& \stackrel{\eqref{agwg}}= & a_ga_h1_{h^\m g^\m}w_hw_g\\
& = & a_ga_hw_h1_{g^\m}w_g\\
& \stackrel{\eqref{agwg}}= & a_g1_h1_{g^\m}w_g\\
& = & a_gw_g1_{gh}\\
& = & 1_g1_{gh}.
\end{eqnarray*}
Similarly, $(a_{gh}w_hw_g)\tau_{g,h} = 1_g1_{gh}.$ Thus, $\tau_{g,h}^\m=a_{gh}w_hw_g$ and $\tau_{g,h}\in \mathcal{U}(D_gD_{gh}),$ and we have a map  $\tau_{-,-}\in  C^2(G,\al,R)$ defined by  $\tau_{-,-}: G\times G\ni (g,h)\mapsto \tau_{g,h}\in R.$ We shall prove that $\tau_{-,-}\in  Z^2(G,\al,R).$

Take $g,h,l \in G$. We see that
\begin{eqnarray*}
\tau_{g,h}\tau_{gh,l} & = & a_ga_hw_{gh}a_{gh}a_lw_{ghl}=a_ga_hw_{gh}1_{gh}1_{ghl}a_{gh}a_lw_{ghl}
 =  a_ga_h1_lw_{gh}a_{gh}a_lw_{ghl}\\&\stackrel{\eqref{agah1l}} = & 1_g1_{gh}1_{ghl}a_ga_hw_{gh}a_{gh}a_lw_{ghl}
 =  1_g1_{gh}1_{ghl}a_ga_h1_{h^\m g^\m}a_lw_{ghl}= 1_g1_{gh}1_{ghl}a_ga_ha_lw_{ghl}.
\end{eqnarray*}
On the other hand,
\begin{eqnarray*}
\al_g(\tau_{h,l}1_{g^\m})\tau_{g,hl} & = & \al_g(\tau_{h,l}1_{g^\m})a_ga_{hl}w_{ghl}= a_g\tau_{h,l}a_{hl}w_{ghl}
 =  a_ga_ha_lw_{hl}a_{hl}w_{ghl}\\
& = & a_ga_ha_l1_{l^\m h^\m}w_{ghl}
 \stackrel{\eqref{agah1l}}=  a_g1_h1_{hl}a_ha_lw_{ghl}=1_g1_{gh}1_{ghl}a_ga_{h}a_{l}w_{ghl}.
\end{eqnarray*}
Then, $\al_g(\tau_{h,l}1_{g^\m})\tau_{g,hl}=\tau_{g,h}\tau_{gh,l}$ and $\tau_{-,-} \in Z^2(G,\al,R),$ keeping in mind \eqref{z2}.
Moreover,
\begin{equation}\label{eq:ProductInB}
\tau_{g,h}a_{gh}=a_ga_hw_{gh}a_{gh}=a_ga_h1_{h^\m g^\m} \stackrel{\eqref{agah1l}}=1_g1_{gh}a_ga_h.
\end{equation}

The map $$F: R\star_{\al,\tau}G\ni  d_s\delta_g\mapsto d_ga_g\in A$$

is a homomorphism  of algebras, as
\begin{eqnarray*}
F((d_g\delta_g)(d_h\delta_h)) & = & F(d_g\al_g(d_h1_{g^\m})\tau_{g,h}\delta_{gh})\\
& = & d_g\al_g(d_h1_{g^\m})\tau_{g,h}a_{gh}\\
& = & d_g\al_g(d_h1_{g^\m})1_g1_{gh}a_ga_h,
\end{eqnarray*}
and
\begin{equation*}
F(d_g\delta_g)F(d_h\delta_h)=(d_ga_g)(d_ha_h)=d_g\al_g(d_h1_{g^\m})a_ga_h. 
\end{equation*}
Let $\mathcal{B}=\dsum_{g \in G}\mathcal{B}_g\subseteq A$, where $\mathcal{B}_g=D_ga_g$, with $g \in G$. Then, 
$$ (d_ga_g)(d_ha_h) = d_g\al_g(d_h1_{g^\m})a_ga_h
 =  d_g\al_g(d_h1_{g^\m})1_g1_{gh}a_ga_h = d_g\al_g(d_h1_{g^\m})\tau_{g,h}a_{gh} \in {\mathcal B}_{gh},$$ thanks to  \eqref{eq:ProductInB}.
Hence, the image of $F$ is $\mathcal B$ and $F$ can be considered as an epimorphism of $R^{\alpha}$-algebras 
$ R\star_{\al,\tau}G  \to \mathcal{B}.$  Furthermore, it  follows from  Remark \ref{azuc}  that $ R\star_{\al,\tau}G$ is Azumaya $R^\alpha$-algebra.   Then by \cite[Corollary 2.3.7]{demeyer1971separable}
we have that $\ker F = (\ker F \cap R^{\alpha }) R\star_{\al,\tau}G.$ In view of \eqref{agwg}, $a_1$ is an invertible element of $A$ and therefore $F$ is injective on $R^{\alpha},$ which implies that $\ker F =0.$ By the argument given at the end of the proof of  \cite[Lemma 5.5]{DoPaPi2} we obtain that  $A=B,$ so that   $A \cong R\star_{\al,\tau}G$.   

Note that $\tau _{1,1} = a_1  a_1 w_1 = a_1 \in R$ thanks to \eqref{agwg}. Moreover, observe that
\begin{equation}\label{wg}
\tau_{g^\m, g}^\m a_{g^\m} = a_1w_gw_{g^\m}a_{g^\m}= a_1 w_g1_g=  a_1 w_g.
\end{equation}
When localizing, we have by \eqref{eq:locC_g} that  $C_g=D_g\star w_g=D_{g\m} w_g,$ and it  follows by \eqref{wg} that  the localization of $\varphi$  defined in \eqref{fiazu} is given  by

$$\varphi_{loc}: \bigoplus_{g \in G}D_{g\m} w_g\ni d_{g\m} w_g  \mapsto  d_{g\m} w_g = d_{g^\m}  a^{\m}_1 \tau_{g^\m,g}^\m a_{g^\m}\in A^{\op}.
$$
We shall check that
$
\varphi':  A^{\op} \ni d_ga_g \to d_g \tau_{g,g^\m} a_1 w_{g^\m} \in  \bigoplus_{g \in G}D_{g^\m} w_g
$
is the inverse of   $\varphi_{loc}.$  Indeed, since $d_{g^\m} a^{\m}_1 {\tau_{g^\m,g}^\m} \in D_{g^\m} $ and $R$ is commutative,  we have that
\begin{eqnarray*}
(\varphi'\circ\varphi_{loc} )(d_{g^\m}w_g)  =
   \varphi'((d_{g^\m} a^{\m}_ 1{\tau_{g^\m,g}^\m}) a_{g^\m})
 =  d_{g^\m} a^{\m}_1 \tau_{g^\m,g}^\m \tau_{g^\m,g} a_1 w_g
 =  d_{g^\m}1_{g^\m}w_g=d_{g^\m}w_g.
\end{eqnarray*}
On the other hand,
\begin{eqnarray*}
	(\varphi_{loc}  \circ \varphi')(d_{g}a_g) & = 
	  \varphi_{loc}(d_{g} {\tau_{g,g^\m}} a_ 1 w_{g^\m}) 
	 =  d_{g} a^{\m}_1 \tau_{g,g^\m}\tau_{g,g^\m}^\m a_1 a_g
	 =  d_{g}1_{g}a_g=d_{g}a_g,
\end{eqnarray*}
and we conclude that $\D(\Gm)\simeq A^{\op}$ as $R^\al$-algebras.
\end{dem}

The following result is a consequence of  Theorem~\ref{prop:grupoC} and \cite[Proposition 6.3]{DoPaPi2}.
\begin{pro}\label{CdentrodeB} Let $\Gm: G\to\Pics(R)$  be a unital partial representation endowed with a factor set such that $[\D(\Gm)]\in \C(\T/R).$ Then $\D(\Gm)$ is an Azumaya $R^\al$-algebra containing $R$  as a maximal  commutative subalgebra. 
\end{pro}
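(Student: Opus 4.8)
The plan is to reduce the statement, via the characterization of $\C(\T/R)$ in Theorem~\ref{prop:grupoC}, to an assertion about partial generalized crossed products of the shape $\D(f\T)$, and then to invoke the corresponding result of \cite{DoPaPi2}. First I would exploit the hypothesis $[\D(\Gm)]\in \C(\T/R)$: by Theorem~\ref{prop:grupoC} there is a partial $1$-cocycle $f\in Z^1(G,\al^*,\Pics_R(R))$ with $\Gamma=f\T$. Note that membership in $\C(\T/R)$ already guarantees that $\Gamma$ satisfies \eqref{eq:Gamma} and is a \emph{unital} partial representation equipped with a genuine factor set, so that $\D(\Gm)=\D(f\T)$ is a bona fide partial generalized crossed product; in particular, by Proposition~\ref{pcgpeanelcomutativocom1} it is already an associative $R^\al$-algebra containing $R\simeq\Gamma_1$ as a subring. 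What remains is precisely the separability of $\D(\Gm)$ over $R^\al$, the equality $\Z(\D(\Gm))=R^\al 1$, and the maximal-commutativity of $R$ inside $\D(\Gm)$.

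Having rewritten $\Gamma$ in the form $f\T$ with $f$ a partial $1$-cocycle taking values in $\Pics_R(R)$, the second step is to apply \cite[Proposition 6.3]{DoPaPi2}, which asserts exactly that for such a cocycle the partial generalized crossed product $\D(f\T)$ is an Azumaya $R^\al$-algebra in which $R$ is a maximal commutative subalgebra. Transcribing that conclusion through the identification $\D(\Gm)=\D(f\T)$ yields the claim. This is the natural converse to Theorem~\ref{Aisopcgp}: whereas there one starts from an Azumaya $A$ and produces a cocycle $f_A$ with $\D(f_A\T)\simeq A^{\op}$, here one starts from the cocycle and recovers an Azumaya algebra, which (together with Remark~\ref{brr}) is what will later allow the class $\llbracket \D(\Gm)^{\op}\rrbracket$ to land in $\B(R/R^\al)$.

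The one point deserving attention is the compatibility of factor sets: the factor set with which $\D(\Gm)$ is presented as an element of $\C(\T/R)$ must be the one (or one isomorphic to the one) used in \cite[Proposition 6.3]{DoPaPi2}. This causes no difficulty, since the very definition of $\C(\T/R)$ presupposes that $f\T$ carries a factor set (recall the remark preceding Theorem~\ref{prop:grupoC} that a general $f\T$ need not admit one), and both the Azumaya property and the maximality of $R$ are invariants of the $R^\al$-algebra isomorphism class of $\D(\Gm)$, which is all that Theorem~\ref{prop:grupoC} fixes. I do not anticipate any substantial obstacle: once the form $\Gamma=f\T$ is secured, the technical work—establishing separability, computing the center, and verifying $C_{\D(\Gm)}(R)=R$—has already been carried out in \cite[Proposition 6.3]{DoPaPi2}, so the present statement is essentially a direct consequence of that result combined with the description of $\C(\T/R)$.
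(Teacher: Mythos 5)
Your proposal is correct and follows exactly the paper's own route: the paper derives Proposition~\ref{CdentrodeB} precisely as a consequence of Theorem~\ref{prop:grupoC} (writing $\Gamma=f\T$ for a cocycle $f\in Z^1(G,\al^*,\Pics_R(R))$) combined with \cite[Proposition 6.3]{DoPaPi2}. Your additional remark on the compatibility of factor sets is a sensible precaution but introduces nothing beyond what the paper's one-line justification already relies on.
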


It follows from Proposition \ref{CdentrodeB} and Remark \ref{brr} that $\llbracket \Delta(\Gamma) \rrbracket\in \B(R/R^\alpha)$. Thus  $\varphi_5(\llbracket \Delta(\Gamma) \rrbracket)=[f_{\D(\Gm)}],$  where  $f_{\D(\Gm)}(g)=[C_g],$ and $C_g$ is defined in \eqref{defC} (taking $A=\D(\Gm)$),  and the bimodule structure of $C_g$ is given by  \eqref{aopb}, for all $g\in G.$  Furthermore, by \cite[Proposition 6.1]{DoPaPi2}, we have that $\Gamma_1\simeq R$ as $R$-algebras.  

\noindent We proceed with the next.

\begin{lem}\label{CgeJgm}
	Let $[\D(\Gm)] \in \C(\T/R)$ and $g\in G.$  Then there is a  $R$-bimodule isomorphism  $\Gm_g\simeq (D_g)_{g^\m}\ot _R C_{g^\m}$, where   $R$-bimodule structure on $\Gamma_g$ is given via multiplication in $\D(\Gm)$.
\end{lem}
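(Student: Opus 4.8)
The plan is to set $A:=\D(\Gm)$ and to exhibit the desired isomorphism through the explicit assignment $v\mapsto 1_g\ot_R v$. First I would record the standing structure. Since $[\D(\Gm)]\in\C(\T/R)$, Proposition~\ref{CdentrodeB} gives that $A$ is an Azumaya $R^\al$-algebra containing $R$ as a maximal commutative subalgebra, so $\llbracket A\rrbracket\in\B(R/R^\al)$ by Remark~\ref{brr}; the modules $C_h$ are those of \eqref{defC} formed with this $A$, each $C_h$ is a central $R$-bimodule with $[C_h]\in\Pic_R(D_h)$, and $f_{\D(\Gm)}(h)=[C_h]$ as recalled before the statement. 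Throughout I use the bimodule structure of $C_{g^\m}$ given by \eqref{aopb}, namely $r\cdot a=\al_g(r1_{g^\m})a=ar$ and $a\cdot r=ar$, and I recall from Theorem~\ref{prop:grupoC} that $\Gm=f\T$ with $f(g)\in\Pic_R(D_g)$ central, so that the one-sided supports of $\Gm_g$ coincide with those of $(D_g)_{g^\m}$, i.e.\ $1_g$ on the left and $1_{g^\m}$ on the right.

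The first substantial step is the set-theoretic inclusion $\Gm_g\subseteq C_{g^\m}$ inside $A$. For $v\in\Gm_g$ the left support gives $1_gv=v$, so $v\in 1_gA$; moreover the relation \eqref{vxeremCcasogaloiscomutativo}, available here because $\Gm_g|(D_g)_{g^\m}$, reads $rv=v\al_{g^\m}(r1_g)$. Replacing $r$ by $\al_g(r1_{g^\m})$ and using $\al_{g^\m}\circ\al_g=\mathrm{id}$ on $D_{g^\m}$ together with $v\cdot r=v\cdot(r1_{g^\m})$ (right support $1_{g^\m}$), this becomes $\al_g(r1_{g^\m})v=vr$ for all $r\in R$; hence $v\in C_{g^\m}$ by \eqref{defC}.

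With this inclusion I define $\Phi\colon\Gm_g\to (D_g)_{g^\m}\ot_R C_{g^\m}$ by $\Phi(v)=1_g\ot_R v$, where $\Gm_g$ carries the bimodule structure coming from the multiplication of $\D(\Gm)$. Using the tensor relation $(1_g*s)\ot_R a=1_g\ot_R(s\cdot a)$ with $s=\al_{g^\m}(r1_g)$, chosen so that $1_g*s=r1_g$, one checks routinely that $\Phi$ is a homomorphism of $R$-bimodules. To see it is injective I introduce the auxiliary map $\Psi\colon (D_g)_{g^\m}\ot_R C_{g^\m}\to A$, $d\ot_R a\mapsto da$ (product taken in $A$), which is well defined since $\Psi((d*r)\ot_R a)=d\al_g(r1_{g^\m})a=\Psi(d\ot_R(r\cdot a))$ matches the two tensor relations; because $\Psi(\Phi(v))=1_gv=v$, the map $\Phi$ is injective.

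The main obstacle is surjectivity of $\Phi$, which I expect to reduce to the homogeneity statement $C_{g^\m}\subseteq\Gm_g$. Indeed, the defining condition of $C_{g^\m}$ is homogeneous for the decomposition $A=\bigoplus_h\Gm_h$, so $C_{g^\m}=\bigoplus_h(C_{g^\m}\cap\Gm_h)$; an element $b_h\in C_{g^\m}\cap\Gm_h$ satisfies both $\al_g(r1_{g^\m})b_h=b_hr$ and, by \eqref{vxeremCcasogaloiscomutativo} applied in $\Gm_h$, $\al_h(r1_{h^\m})b_h=b_hr$, whence $\bigl(\al_g(r1_{g^\m})-\al_h(r1_{h^\m})\bigr)b_h=0$ for every $r\in R$. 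For $h\ne g$ the $\omega$-outerness of $\al$ guaranteed by the partial Galois hypothesis (see Remark~\ref{azuc} and \cite[Theorem 2.4]{PS}) forces $1_g1_hb_h=b_h=0$, so $C_{g^\m}=C_{g^\m}\cap\Gm_g\subseteq\Gm_g$. Once this is known, every element of $(D_g)_{g^\m}\ot_R C_{g^\m}$ rewrites as $1_g\ot_R b$ with $b\in C_{g^\m}=\Gm_g$ via the tensor relation used above, hence equals $\Phi(b)$, and $\Phi$ is the required $R$-bimodule isomorphism $\Gm_g\simeq (D_g)_{g^\m}\ot_R C_{g^\m}$. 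As an alternative to the outerness argument I would localize at each $\mfp\in\mathrm{Spec}(R^\al)$, where $\Pic(R)$ is trivial and both $\Gm_g$ and $C_{g^\m}$ become free of rank one over $D_g$, and check that $\Phi$ is an isomorphism there using the explicit local generators from the proof of Theorem~\ref{Aisopcgp}; being a local isomorphism at every prime, $\Phi$ is an isomorphism.
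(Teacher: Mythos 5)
Your proof is correct and ends up with exactly the same isomorphism as the paper --- the map $u_g \mapsto 1_g \ot_R u_g$, inverted by $d_g \ot_R c_{g^\m} \mapsto d_g c_{g^\m}$ --- but it reaches it by a genuinely different route at the one point that carries the real content. The paper's proof simply cites equality (31) of \cite{DoPaPi2}, which gives $\Gm_g = C_{g^\m}$ \emph{as sets}, and then verifies directly that $\lambda$ and $\lambda'$ are mutually inverse $R$-bimodule maps; the set equality itself is never proved there. You instead re-derive it: the inclusion $\Gm_g \subseteq C_{g^\m}$ from \eqref{vxeremCcasogaloiscomutativo} together with the support idempotents (this part is fine), and the reverse inclusion by decomposing $C_{g^\m}$ along the grading $\D(\Gm) = \bigoplus_h \Gm_h$ --- legitimate, since left and right multiplication by $R \simeq \Gm_1$ preserve each homogeneous component --- and killing the components with $h \neq g$. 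This makes the lemma self-contained, which is a genuine gain over the paper's citation, but the step that kills $b_h \in C_{g^\m} \cap \Gm_h$ for $h \neq g$ is justified too loosely: $\omega$-outerness as in \cite{PS} (via Remark~\ref{azuc}) is a statement about elements of $R$, whereas your $b_h$ lives in $\Gm_h \subseteq \D(\Gm)$, so it cannot be applied verbatim. The cleanest repair uses partial Galois coordinates $x_i, y_i$ directly: substituting $r \mapsto \al_{g^\m}(r1_g)$ in $\bigl(\al_g(r1_{g^\m}) - \al_h(r1_{h^\m})\bigr)b_h = 0$ and using $1_gb_h = 1_hb_h = b_h$ gives $rb_h = \al_s(r1_{s^\m}1_g)b_h$ for all $r \in R$, where $s = hg^\m \neq 1$; then
\begin{equation*}
b_h \;=\; \sum_i x_iy_i\,b_h \;=\; \Bigl(\sum_i x_i\al_s(y_i1_{s^\m})\Bigr)\al_s(1_{s^\m}1_g)\,b_h \;=\; \delta_{1,s}\,1_s1_h\,b_h \;=\; 0 .
\end{equation*}
(Equivalently, one notes $1_g1_h$ lies in the ideal generated by the elements $\al_g(r1_{g^\m})1_h - \al_h(r1_{h^\m})1_g$, which annihilates $b_h$.) With this repair --- or with your localization alternative carried out in full --- your argument is complete; the trade-off is that the paper's proof is short but opaque, while yours exhibits why $\Gm_g = C_{g^\m}$ holds, at the price of the Galois-coordinate separation argument above.
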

\begin{dem} First of all, it follows from  the equality (31) in \cite{DoPaPi2} that $\Gamma_g=C_{g^\m},$ as sets, for any $g\in G.$  We shall show that   
$\lambda:  \Gm_g \ni u_g \mapsto  1_g\ot_R u_g\in  (D_g)_{g^\m} \ot _R C_{g^\m}$ is a  $R$-bimodule isomorphism. It is clear that  $\lambda$ is right $R$-linear. To show that it is left $R$-linear, take $r \in R$ and $u_g\in \Gm_g ,$ then 
	\begin{eqnarray*}
		\lambda(ru_g) & = & 1_g\ot_R ru_g = 
		1_g\ot_R \al_{g^\m}(r1_g)\cdot u_g
		 =  1_g*\al_{g^\m}(r1_g)\ot_R u_g= r1_g\ot_R u_g
		 =  r\lambda(u_g).
	\end{eqnarray*} In order to construct the inverse of $\lambda$  let
	\begin{equation}\label{invi}\lambda':  (D_g)_{g^\m}\ot_R C_{g^\m} \ni d_g \ot_R c_{g^\m}  \mapsto d_gc_{g^\m}\in \Gamma_g. \end{equation}
	Let us see that  $\lambda'$ is well-defined. Take $r \in R $, then 
	\begin{equation*}
	\lambda'(d_g* r,c_{g^\m}) = \lambda'(d_g\al_g(r1_{g\m}),c_{g^\m})= d_g\al_g(r1_{g^\m})c_{g\m}.
	\end{equation*}
	On the other hand,
	\begin{equation*}
	\lambda'(d_g,r \cdot c_{g^\m})= \lambda'(d_g,\al_g(r1_{g^\m})c_g)=d_g\al_g(r1_{g^\m})c_{g^\m},
	\end{equation*}
 and   $\lambda'$ is well-defined.  Finally,
	\begin{eqnarray*}
		(\lambda \circ \lambda')(d_g\ot_R c_{g\m}) & = & \lambda(d_gc_{g^\m}) = 1_g\ot_R  d_gc_{g^\m}=1_g \ot_R \al_{g^\m} (d_g) \cdot c_{g^\m}  \\
		& = &  
		1_g \cdot \al_{g^\m} (d_g)  \ot_R c_{g^\m} 
		 =  d_g\ot_R c_{g^\m},
	\end{eqnarray*}
	and
$
	(\lambda'\circ \lambda)(u_g)=\lambda'(1_g\ot_R u_g) = 1_gu_g=u_g.
$
	Hence  $\lambda^\m=\lambda',$ which finishes the proof.
	\end{dem}

\begin{pro}\label{proparaL} Let $[\D(\Gm)]\in \C(\T/R)$ and $P$ be an $R^\al$-module. Suppose that $P$ is a progenerator such that $\End_{R^\al}(P)\simeq \D(\Gm)$, as $R^\al$-algebras. Then,
	\begin{itemize}
		\item[$(i)$] There is a $R$-bimodule structure on $P$ such that $[P]\in \Pic_R(R)$;
		\item[$(ii)$] $\Gamma_g\simeq P\ot_R(D_g)_{g^\m}\ot_RP^*$, for all  $g \in G$. 
	\end{itemize}
\end{pro}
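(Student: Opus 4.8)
The plan is to use the algebra isomorphism $\D(\Gm)\simeq\End_{R^\al}(P)$ to transport the subring $R\simeq\Gamma_1\subseteq\D(\Gm)$ into $\End_{R^\al}(P)$, thereby giving $P$ a left $R$-module structure (acting through $R\hookrightarrow\End_{R^\al}(P)$ by evaluation), which I view as a central $R$-bimodule since $R$ is commutative. For part $(i)$ I would then prove that $P$ is a finitely generated projective $R$-module of rank $1$. Finiteness over $R$ is immediate, as $P$ is finitely generated over $R^\al\subseteq R$. For projectivity I would invoke separability: by \cite[Theorem 4.2]{dokuchaev2007partial} the extension $R^\al\subseteq R$ is separable, so the multiplication map $R\ot_{R^\al}P\to P$ admits an $R$-linear splitting built from the separability idempotent; since $P$ is $R^\al$-projective, $R\ot_{R^\al}P$ is $R$-projective, and hence so is its $R$-module direct summand $P$.

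To pin down the rank I would identify the centralizer: an element of $\End_{R^\al}(P)$ commutes with the image of $R$ precisely when it is $R$-linear, so $C_{\End_{R^\al}(P)}(R)=\End_R(P)$. By Proposition~\ref{CdentrodeB} the algebra $\D(\Gm)\simeq\End_{R^\al}(P)$ is Azumaya over $R^\al$ and contains $R$ as a maximal commutative subalgebra, whence $C_{\End_{R^\al}(P)}(R)=R$ and $\End_R(P)=R$. Localizing at a prime $\mathfrak q$ of the commutative ring $R$, the projective module $P_{\mathfrak q}$ is free of some rank $n(\mathfrak q)$ and the equality $\End_{R_{\mathfrak q}}(P_{\mathfrak q})=R_{\mathfrak q}$ forces $n(\mathfrak q)^2=1$. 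Thus $P$ has rank $1$ everywhere and $[P]\in\Pic(R)=\Pic_R(R)$, which proves $(i)$.

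For part $(ii)$ the plan is to compare the two endomorphism algebras through the Morita autoequivalence attached to the invertible $R$-module $P$. Using that $R^\al\subseteq R$ is partial Galois I would first identify $\End_{R^\al}(R)$ with $\D(\T)=R\star_\al G$ as a graded $R$-bimodule via \cite[Theorem 4.1]{dokuchaev2007partial}, the isomorphism sending $d\delta_g$ to the map $x\mapsto d\,\al_g(x1_{g^\m})$, so that its $g$-homogeneous component is $(D_g)_{g^\m}$ and consists exactly of those $f$ with $1_gf=f$ and $fr=\al_g(r1_{g^\m})f$ for all $r\in R$. I would then consider the $R$-bimodule map
$$\theta\colon P\ot_R\End_{R^\al}(R)\ot_RP^*\longrightarrow\End_{R^\al}(P),\qquad \theta(p\ot f\ot\phi)(p')=p\,f(\phi(p')),$$
and check directly that it is $R$-bilinear. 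Because $P$ is invertible over $R$, localizing at the primes $\mathfrak p$ of $R^\al$, where the semilocal ring $R$ has trivial Picard group and hence $P$ becomes free of rank $1$, identifies $\theta$ with the obvious isomorphism $\End_{R^\al}(R)\simeq\End_{R^\al}(P)$ induced by $P\simeq R$; since localization commutes with the relevant $\Hom$-functors (all modules being finitely generated projective over $R^\al$), $\theta$ is an isomorphism. Finally, recalling from Lemma~\ref{CgeJgm} that $\Gamma_g=C_{g^\m}=\{a:1_ga=a,\ ar=\al_g(r1_{g^\m})a\}$ inside $\D(\Gm)\simeq\End_{R^\al}(P)$, I would note that these are the same central-idempotent and twisting conditions that cut out the $g$-component of $\End_{R^\al}(R)$; as $\theta$ carries $1_g$ and the relation $fr=\al_g(r1_{g^\m})f$ across the tensor factors, it maps $P\ot_R(D_g)_{g^\m}\ot_RP^*$ into $\Gamma_g$. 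Being a bijection that respects both direct sum decompositions, $\theta$ restricts to isomorphisms $P\ot_R(D_g)_{g^\m}\ot_RP^*\simeq\Gamma_g$ for every $g\in G$, giving $(ii)$.

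The main obstacle I anticipate is the bookkeeping in part $(ii)$: matching the twisted right action $d\ast r=d\,\al_g(r1_{g^\m})$ of $(D_g)_{g^\m}$ with the defining relation of $C_{g^\m}$ under $\theta$ without sign or side errors, and carefully justifying that the localization argument really exhibits $\theta$ as an isomorphism (rather than merely a local one), which rests on the fact that $P$, $R$ and the two endomorphism algebras are all finitely generated projective over $R^\al$ so that $\Hom_{R^\al}$ commutes with localization. Part $(i)$ is comparatively routine once the separability of $R^\al\subseteq R$ and the maximal-commutativity from Proposition~\ref{CdentrodeB} are in hand.
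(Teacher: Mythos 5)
Your proposal is correct, but both halves take routes that genuinely differ from the paper's own proof. In part $(i)$ the skeleton agrees (transport the $R$-action through the isomorphism $\varphi$, use Proposition~\ref{CdentrodeB} to get $R=C_{\End_{R^\al}(P)}(R)=\End_R(P)$, and conclude rank one), but where you prove $R$-projectivity of $P$ by splitting the multiplication map $R\ot_{R^\al}P\to P$ with the separability idempotent of $R^\al\subseteq R$, the paper argues in two steps through endomorphism rings: $\End_{R^\al}(P)$ is a f.g.p.\ $R$-module by \cite[Theorem 5.6]{auslander1960brauer}, $P$ is a f.g.p.\ $\End_{R^\al}(P)$-module by \cite[Theorem 2]{azumaya1966completely} (this is where the progenerator hypothesis enters), hence $P$ is f.g.p.\ over $R$ by \cite[Proposition 1.1.4]{demeyer1971separable}; your argument is more elementary and sound. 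In part $(ii)$ the paper works one $g$ at a time: it identifies $\Gm_g$ inside $A=\End_{R^\al}(P)$ via the twisted-centralizer description \eqref{send}, shows that $\lambda:\Gm_g\ot_RP\to P\ot_R(D_g)_{g^\m}$, $\xi\ot_Rp\mapsto\xi(p)\ot_R1_g$, is invertible by localizing at ${\bf Spec}(R^\al)$ and writing an explicit local inverse built from the elements $f_g,l_g$ of \cite{DoPaPi2}, and then tensors with $P^*$. You instead invoke the partial Galois isomorphism $R\star_\al G\simeq\End_{R^\al}(R)$ of \cite[Theorem 4.1]{dokuchaev2007partial} --- which the paper's proof never uses --- to define a single global map $\theta$, verify bijectivity by the same semilocal/trivial-Picard localization, and let the two graded decompositions match up componentwise; note that both proofs still rely on formula (31) of \cite{DoPaPi2} (your Lemma~\ref{CgeJgm} reference) to know that the twisted relations cut out exactly the components on each side. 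In fact the two constructions produce the same isomorphism: inverting your $\theta$ on the $g$-component against a dual basis $\{p_i,f_i\}$ of $P$ sends $\xi_g$ to $\sum_i\xi_g(p_i)\ot_R1_g\ot_Rf_i$, which is precisely the paper's map $\varphi_g$ of \eqref{defidevarphig}. What each route buys: the paper's componentwise argument is self-contained with completely explicit local inverses, while yours treats all $g$ simultaneously and makes the compatibility with the gradings transparent (in effect anticipating Proposition~\ref{teoc}(2), that $\D(\Gm)$ and $\D(\T^P)$ agree as graded bimodules), at the cost of the two bookkeeping verifications you flag --- the local identification of $\theta$ with the canonical isomorphism and the matching of the twisted actions --- both of which do go through.
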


\begin{dem} 
	$(i)$ Let  $\varphi:\D(\Gm)\to \End_{R^\al}(P)$  be a $ R^\al$-algebra  isomorphism. We endow  $P$ with a  $R$-module structure via
$r\cdot p=\varphi(r)(p),$ for all $r\in R$ and $p\in P.$
Since by Proposition \ref{CdentrodeB}  $R$  is  a maximal  commutative subalgebra of  $\D(\Gm),$ we have:
\begin{equation}
R=C_{\D(\Gm)}(R)\simeq C_{\End_{R^\al}(P)}(R)=\End_R(P). \label{RisoHom}
\end{equation}
In particular, $P$ is a  faithful $R$-module. Since the ring extension $R^\al\subseteq R$ is  separable and by Proposition~\ref{CdentrodeB} the $R^{\alpha}$-algebra  $\End_{R^{\alpha}}(P)$ is  Azumaya,     it follows by  
\cite[Theorem 5.6]{auslander1960brauer} that  $\End_{R^\al}(P)$ is a f.g.p. 
$R$-module.  Thus, the fact that $P$ is an  $R^\alpha$-progenerator implies by   \cite[Theorem 2]{azumaya1966completely}  that $P$ is a  f.g.p.  $\End_{R^{\alpha}}(P)$-module. This implies that $P$ is a f.g.p. $R$-module, thanks to \cite[Proposition 1.1.4]{demeyer1971separable}. Hence, the equality (\ref{RisoHom}) implies ${\bf rk}_R(P)=1,$ that is $[P]\in \Pic_R(R)$ (see, for example,  \cite[Lemma 1.5.1]{demeyer1971separable}).

\noindent $(ii)$ Let $A=\End_{R^\al}(P)$ we know that $\llbracket A\rrbracket \in \B(R/R^\al),$ then by 
 formula (31) from  \cite{DoPaPi2}  we have
\begin{equation}\label{send}
\Gm_g=\{\xi \in A: \al_g(r1_{g^{-1}})\xi=\xi r, \ \forall \ r \in  R\}.
\end{equation}
Note that the map
$
\lambda:  \Gm_g\ot _R P\ni \xi \ot_R p  \mapsto \xi(p)\ot _R 1_g \in P\ot _R (D_g)_{g^\m},
$
is well-defined.  Indeed, for  $r \in R, p \in P$ and $\xi \in \Gm_g$,
\begin{equation*}
\lambda(\xi\cdot  r, p)=(\xi\cdot  r)(p)\ot _R 1_g\stackrel{\eqref{estrelaMebimodulo}}=\xi(rp)\ot _R 1_g=\lambda(\xi, rp).
\end{equation*}
It is easy to see that $\lambda$ is left $R$-linear.  Moreover,
\begin{eqnarray*}
\lambda(\xi \ot _R pr)  &=& \xi(rp)\ot _R 1_g\stackrel{\eqref{send}}= \al_g(r1_{g^\m})\xi(p)\ot _R 1_g
 =  \xi(p)\ot _R \al_g(r1_{g^\m}) \\
&=& (\xi(p)\ot _R 1_g)* r
 =  \lambda(\xi \ot _R p)* r,
\end{eqnarray*} and $\lambda$ is a morphism of $R$-bimodules. We prove that $\lambda$ is an isomorphism. Localizing at an ideal in  ${\bf Spec}(R^\al)$ we may assume, as in the proof 
of Theorem~\ref{Aisopcgp},  that 
 $C_g\simeq D_g,$ as $R$-modules.  Let 
$\gamma_g:D_g\to C_g$ be an $R$-isomorphism and denote $\gamma_g(1_g)=f_g \in C_g .$  Then  $C_g=D_g\cdot f_g=f_gD_g$ and  by \eqref{defC} $\al_{g^\m}(r1_g)f_g(p)=f_g(rp)$, for every  $r \in R$ and  $p \in P$.
By Lemma \ref{CgeJgm},  one has an isomorphism $\widetilde{\gamma}_g:(D_g)_{g^\m}\to \Gm_{g}$ defined  via the chain of  isomorphisms 
$$
(D_g)_{g^\m} \simeq (D_g)_{g^\m}\ot _R D_{g^\m}\simeq (D_g)_{g^\m}\ot _R C_{g^\m}\simeq \Gm_g.
$$
The image of $d_g \in (D_g)_{g^\m} $  by $\widetilde{\gamma}_g$ is 
$$ d_g \mapsto d_g \ot_R 1_{g^\m} \mapsto d_g \ot_R  \gamma_{g^\m}(1_{g^\m}) 
\stackrel{\eqref{invi}}\mapsto d_g \gamma_{g^\m}(1_{g^\m}) = d_g f_{g^\m} ,$$
so that $\widetilde{\gamma}_g(1_g)= 1_gf_{g^\m} = f_{g^\m}$ and  $\Gm_g=D_g f_{g^\m} =  f_{g^\m}  D_{g^\m},$ where the last equality follows from \eqref{send}.   Since $A$ is an  Azumaya $R^\alpha$-algebra containing $R$ as a maximal commutative subalgebra, one obtains by (34) and (35) in \cite{DoPaPi2}  that there is  $l_g \in A$  such that
\begin{equation*}
f_{g} \circ l_g=1_{g^\m} \quad \mbox{and} \quad l_g\circ f_{g}=1_g.
\end{equation*}
Then by formula (36) from \cite{DoPaPi2} we have that
$
\al_g(r1_{g^\m})l_g(p)=  l_g(r  p), \ \mbox{for all } r \in R, p \in P.
$

\noindent We shall show that 
$
\lambda:  f_{g^\m}D_{g^{-1}}\ot _R P \ni f_{g^\m}d_{g^\m} \ot _R p  \mapsto( f_{g^\m}d_{g^\m} ) (p)\ot _R 1_g\in  P\ot _R (D_g)_{g^\m},$ is an isomorphism with inverse
$$
\lambda' : P\ot_R (D_g)_{g^\m}\ni p \ot_R d_g  \mapsto f_{g^\m}\alpha_{g^\m}(d_g)\ot_R  l_{g^\m}(p) \in f_{g^\m}D_{g^{-1}}\ot _R P.$$
Firstly,  to check that  $
\lambda'$ is well-defined, take $p \in P, d_g \in D_g$ and $r \in R$,  we have
\begin{eqnarray*}
\lambda'(pr,d_g) & = &f_{g^\m}\alpha_{g^\m}(d_g)\ot_R l_{g^\m}(r  p)\\
& = &f_{g^\m}\alpha_{g^\m}(d_g)\ot _R\al_{g^\m}(r1_g)l_{g^\m}(p)\\
& = & f_{g^\m}\alpha_{g^\m}(d_g)\al_{g^\m}(r1_g)\ot_R l_{g^\m}(p)\\
& = &   f_{g^\m}\alpha_{g^\m}(rd_g)\ot_R l_{g^\m}(p)\\
& = & \lambda'(p,rd_g),
\end{eqnarray*} as desired.  Now, given  $p \in P$ and $d_{g^\m}\in D_{g^\m}$
\begin{eqnarray*}
(\lambda'\circ \lambda)(f_{g^{-1}}d_{g^\m}\ot_R p) & = & \lambda'((f_{g^{-1}}d_{g^\m})(p)\ot _R1_g)\\
& = &f_{g^\m}1_{g\m} \ot_R l_{g^\m}((f_{g^\m}d_{g^{-1}})(p))\\
& = & f_{g^\m}\ot_R d_{g^{-1}} \cdot p\\
& = & f_{g^\m}d_{g^{-1}}\ot_R  p.
\end{eqnarray*}
Furthermore,
\begin{eqnarray*}
(\lambda\circ \lambda')(p\ot _R d_g) & = & \lambda(f_{g^\m}\alpha_{g^\m}(d_g)\ot_R l_{g^\m}(p))\\
& = & f_{g^\m}(\alpha_{g^\m}(d_g)l_{g^\m}(p))\ot_R 1_g\\
& = & d_g \cdot p\ot_R 1_g\\
& = &p\ot_R d_g.
\end{eqnarray*}
Thus $\lambda ^\m=\lambda'$.
Consequently,  there exists an $R$-bimodule isomorphism $\varphi_g: \Gm_g\to P\ot_R (D_g)_{g^\m}\ot_R P^*$, defined by the chain of  isomorphisms
$$\Gm_g\simeq \Gm_g\ot_R R\simeq \Gm_g\ot P\ot_R P^{*}\simeq P\ot_R (D_g)_{g^\m}\ot P^{*},$$
which finishes the proof.\end{dem}

Observe that using the notation in the proof of  Proposition \ref{proparaL}, we have  by  the  definition of  $\lambda$  
that the isomorphism $\varphi_g$ is given by:
\begin{equation}
\varphi_g:\Gm_g  \ni \xi_g\mapsto \dsum_{i=1}^n\xi_g(p_i)\ot_R 1_g\ot f_i\in P\ot_R (D_g)_{g^\m}\ot_R P^*,\label{defidevarphig}
\end{equation} 
where $\{p_i,f_i\}_{1\leq i\leq n}$ is a dual basis of $P$ as an $R$-module.

\noindent  Let  $[P] \in \Pic_R(R),$ then $[P]$ is an invertible element in  $\Pics_{R^\alpha}(R),$ it follows that 
the map  $$\T^P: G\to \Pics_{R^\alpha}(R),$$ defined by   $\T^P_g\simeq P\ot_R (D_g)_{g^\m}\ot_R P^*$,  is  a partial representation, which is easily seen to be unital with $\T_g^P\ot_R \T_{g^\m}^P\simeq D_g$ and  $\T_g^P|(D_g)_{g^\m}$, for all  $g \in G$. Moreover, let
	\begin{equation}\label{omp}
		\D(\T^P)=\bigoplus_{g \in G}P\ot_R (D_g)_{g^\m}\ot_R P^*,
	\end{equation} and consider the $R$-bimodule isomorphisms: 
	$$\{f_{g,h}^P:  P\ot_R (D_g)_{g^\m}\ot_R P^*\ot_R P\ot_R (D_h)_{h^\m}\ot_R P^*  \to 1_gP\ot_R (D_{gh})_{(gh)^\m}\ot _RP^*\}_{g,h\in G}$$
defined by the composition:
\begin{align*}
p_1 \ot_R d_g \ot_R f_1\ot_R p_2\ot _R d'_h \ot_R f_2 &\mapsto p_1 \ot_R d_g \ot_R f_1( p_2)\ot _R d_h \ot_R f_2\\
&\mapsto p_1 \ot_R d_g \alpha_g( f_1( p_2)1_{g^\m})\ot _R d'_h \ot_R f_2\\
&\stackrel{\eqref{fte}}\mapsto  p_1 \ot_R d_g \alpha_g( f_1( p_2)d'_h1_{g^\m}) \ot_R f_2\\
\end{align*}
that is, for $g,h\in G$
	\begin{equation}
		\begin{array}{c c c l}
			f_{g,h}^P: & P\ot_R (D_g)_{g^\m}\ot_R P^*\ot_R P\ot_R (D_h)_{h^\m}\ot_R P^* & \longrightarrow & 1_g P\ot_R (D_{gh})_{(gh)^\m}\ot _RP^*\\
			& p_1 \ot_R d_g \ot_R f_1\ot_R p_2\ot _R d_h \ot_R f_2 & \longmapsto & p_1 \ot_Rd_g  \alpha_g( f_1( p_2)d_h1_{g^\m}) \ot_R f_2
		\end{array} \label{conjuntodefatoresparaP}
\end{equation} %
Direct computations, similar to those made to prove that \eqref{fte} determines a factor set for $\T ,$ show that \eqref{conjuntodefatoresparaP} gives a factor set for $ \T^P ,$ so that $\D(\T^P)$ is a partial  generalized crossed product and $ [\D(\T^P)]\in \mathcal{C}(\T/R).$

   \cite[Lemma 5.6]{DoRo}  Let $[P]\in \Pic_\Z(R)^{(G)},$  and  denote $\G_g^{P}=P\ot_R\G_g\ot_R P^{*}$,  for all $g \in G$. Then  the map 
$ \G^P: G \ni g\mapsto [\G_g^P] \in  \Pics(R),$ is a unital partial representation

We proceed with the next. 

\begin{pro}\label{teoc}Let $\Gamma$ be as in  Proposition \ref{proparaL}.  Then the  following assertions hold.
\begin{enumerate}
\item   The map
$\mathcal{L}_0:  \Pic_R(R) \ni 		 [P]  \mapsto [\D(\T^P)] \in  \mathcal{C}(\T/R)$
is a group homomorphism.
\item If $P$ is as  in  Proposition \ref{proparaL}, then  $[\Delta (\Gamma)]=[\D(\T^P)] $ in $\C(\T/R)$.
\end{enumerate}
\end{pro}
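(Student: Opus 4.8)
The plan is to prove both parts by producing, in each case, a family of $R$-bimodule isomorphisms between the relevant components and then invoking Remark~\ref{sod}, which guarantees that any family of component isomorphisms making the square \eqref{morfismodepcgp0} commute is automatically an isomorphism of partial generalized crossed products. Hence in every case the genuine content is a factor-set compatibility check; the existence of the underlying bimodule isomorphisms will come from the centrality of $P,Q$ (valid since $[P],[Q]\in\Pic_R(R)$), the unital relation \eqref{isot}, the invertibility $P\ot_RP^*\simeq R$, and the flip isomorphisms of Proposition~\ref{isomorfismoT}, which take the simple transposition form precisely because the modules involved are central.

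For the well-definedness of $\mathcal{L}_0$ in $(1)$ I would start from an $R$-bimodule isomorphism $P\simeq P'$, which induces $(P')^*\simeq P^*$ and hence, by tensoring, component isomorphisms $\T^P_g\simeq\T^{P'}_g$ for all $g$; naturality of the maps defining \eqref{conjuntodefatoresparaP} makes these intertwine $f^P_{g,h}$ with $f^{P'}_{g,h}$, so Remark~\ref{sod} yields $[\D(\T^P)]=[\D(\T^{P'})]$. For the homomorphism property I would represent $\mathcal{L}_0([P])\mathcal{L}_0([Q])$ by means of \eqref{prodc} as the class of $\bigoplus_{g}\T^P_g\ot_R(D_{g^\m})_g\ot_R\T^Q_g$ with the factor set \eqref{fla}. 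The heart of the matter is to construct $R$-bimodule isomorphisms
$$\Phi_g:\ \T^P_g\ot_R(D_{g^\m})_g\ot_R\T^Q_g\ \longrightarrow\ \T^{P\ot_RQ}_g,\qquad g\in G,$$
by a chain of transpositions: using that $P,Q,P^*,Q^*$ are central one moves $Q$ and $P^*$ into position via the simple flips of Proposition~\ref{isomorfismoT}, collapses $\T_g\ot_R(D_{g^\m})_g\ot_R\T_g\simeq\T_g$ through \eqref{isot}, cancels $P\ot_RP^*\simeq R$, and finally identifies $(P\ot_RQ)^*\simeq Q^*\ot_RP^*$. Checking that $\{\Phi_g\}$ intertwines \eqref{fla} with the factor set \eqref{conjuntodefatoresparaP} of $\T^{P\ot_RQ}$, and applying Remark~\ref{sod}, then gives $\mathcal{L}_0([P]\,[Q])=\mathcal{L}_0([P])\,\mathcal{L}_0([Q])$.

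For part $(2)$, Proposition~\ref{proparaL}$(ii)$ already supplies the $R$-bimodule isomorphisms $\varphi_g\colon\Gm_g\to\T^P_g$ written explicitly in \eqref{defidevarphig} through a dual basis $\{p_i,f_i\}$ of $P$, so it remains only to verify \eqref{morfismodepcgp0} for $\{\varphi_g\}$. Here the factor set on $\D(\Gm)$ is the Brauer-section one, whose multiplication is the reversed composition $u_g\stackrel{\Gm}{\circ}u_h=u_hu_g$ in $A=\End_{R^\al}(P)$ (see \eqref{prodg}), while $\Gm_g\subseteq A$ is described by \eqref{send}; the factor set on $\D(\T^P)$ is \eqref{conjuntodefatoresparaP}. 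I would expand both composites $\varphi_{gh}\circ f^\Gm_{g,h}$ and $f^P_{g,h}\circ(\varphi_g\ot\varphi_h)$ on a generator $u_g\ot_R u_h$ and reduce them to a common expression using the dual-basis identity $\sum_k f_k(p)p_k=p$ together with the defining relation $\al_g(r1_{g^\m})\xi(p)=\xi(rp)$ of $\Gm_g$ from \eqref{send}. Once the two composites agree, Remark~\ref{sod} gives $[\D(\Gm)]=[\D(\T^P)]$ in $\C(\T/R)$.

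The main obstacle in both parts is exactly these factor-set compatibility computations. In $(1)$ the difficulty is bookkeeping: the product factor set \eqref{fla} carries the idempotents $1_g,1_h$ and the twisting automorphisms $\al_g,\al_h$, and one must check term by term that $\Phi_{gh}\circ f^\Lambda_{g,h}$ coincides with $f^P_{g,h}\circ(\Phi_g\ot\Phi_h)$; the centrality of $P,Q,P^*,Q^*$ is what keeps every flip in the transposition form of Proposition~\ref{isomorfismoT} and prevents the chain from becoming intractable. In $(2)$ the obstacle is the dual-basis manipulation intertwining composition in $A$ with the twisted map \eqref{conjuntodefatoresparaP}; the relation \eqref{send} is the essential input, since it converts $\xi(rp)$ into $\al_g(r1_{g^\m})\xi(p)$ and thereby produces exactly the automorphism $\al_g$ that appears in \eqref{conjuntodefatoresparaP}.
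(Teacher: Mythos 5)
Your plan for part (2) follows the paper's strategy exactly (the isomorphisms $\varphi_g$ of \eqref{defidevarphig}, a dual-basis computation hinging on \eqref{send}, then Remark~\ref{sod}), but it contains a concrete error in the identification of the factor set on $\D(\Gm)$. You take the multiplication to be the \emph{reversed} composition $u_g\stackrel{\Gm}{\circ}u_h=u_hu_g$, citing \eqref{prodg}. That convention belongs to the setting of Theorem~\ref{Aisopcgp}, where one \emph{starts} from an Azumaya algebra $A$, realizes $\Gm_g=C_g\subseteq A$, and ends up with $\D(\Gm)\simeq A^{\op}$. In Proposition~\ref{proparaL} the hypothesis is the opposite-free isomorphism $\D(\Gm)\simeq\End_{R^\al}(P)$ of $R^\al$-algebras, and the components are located inside $A=\End_{R^\al}(P)$ via \eqref{send} (so that $\Gm_g=C_{g^\m}$ as sets); under this identification the factor set is induced by the product of $A$ itself, namely $f^\Gm_{g,h}(\xi_g\ot_R\xi_h)=1_g(\xi_g\circ\xi_h)$. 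The distinction is not cosmetic: carrying out your own computation, one finds
\begin{equation*}
\bigl[f_{g,h}^P\circ(\varphi_g\ot_R\varphi_h)\bigr](\xi_g\ot_R\xi_h)=\dsum_{j}1_g\,\xi_g(\xi_h(p_j))\ot_R 1_{gh}\ot_R f_j=\varphi_{gh}\bigl(1_g(\xi_g\circ\xi_h)\bigr),
\end{equation*}
using \eqref{conjuntodefatoresparaP}, the relation $\al_g(r1_{g^\m})\xi_g(p)=\xi_g(rp)$ from \eqref{send}, and the dual basis. With your reversed convention the other side of \eqref{morfismodepcgp0} would be $\varphi_{gh}(1_g(\xi_h\circ\xi_g))$, and the square fails to commute, since $\xi_g$ and $\xi_h$ need not commute in $\End_{R^\al}(P)$. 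Once the convention is corrected, your argument for (2) is precisely the paper's proof.

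For part (1) you diverge from the paper: the paper proves nothing computational here, but observes that with $R'=R$ and $\Gamma'=\T$ one has $\Pic_\Z(R)^{(G)}=\Pic_R(R)$, so that $\mathcal{L}_0$ is literally the homomorphism $\mathcal{L}$ of \cite[Theorem 5.10]{DoRo}, already known to be a group morphism. Your alternative — building isomorphisms $\Phi_g:\T^P_g\ot_R(D_{g^\m})_g\ot_R\T^Q_g\to\T^{P\ot_RQ}_g$ and verifying compatibility of \eqref{fla} with \eqref{conjuntodefatoresparaP} — would give a self-contained proof in the commutative case, and an analogous verification is in fact carried out in the paper's proof of Theorem~\ref{exac} for the products $[\D(f^\Gamma\T)][\D(f^\Omega\T)]$; so the idea is sound. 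However, you defer exactly the step you call the heart of the matter (the intertwining of the two factor sets by $\Phi_g$, with its bookkeeping of idempotents and the twists $\al_g$), so as written part (1) is a plausible plan rather than a complete argument, and it is substantially more laborious than the paper's one-line reduction to \cite{DoRo}.
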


\begin{dem} (1)   Note that the map $\mathcal{L}_0 $ is a particular case of the group morphism 
$\mathcal{L} : \Pic_\Z(R')^{(G)} \to  \mathcal{C}(\T/R'),$  given \cite[Theorem 5.10]{DoRo} for a  non-necesarilly commutative ring $R'.$ Indeed, 
the subgroup  $\Pic_\Z(R')^{(G)}$  of  $\Pic_\Z(R)$ is defined in \cite{DoRo} as follows:
	$$\Pic_\Z(R')^{(G)}=\{[P]\in \Pic_\Z(R'): P\ot_{R'}\Gamma'_g\ot_{R'} P^*|\Gamma'_g, \ \mbox{for all} \ g \in G\},$$
	where $\Z=\Z(R')$ and $\Gamma' : G \to \Pics (R)$ is a unital partial representation. Taking $R'=R$ and   $\Gamma' = \T,$ it is readily seen that   $\Pic_\Z(R)^{(G)} =  \Pic_R (R)$ and $\mathcal{L}_0 $ coincides with $\mathcal{L}. $

(2) By Proposition \ref{proparaL}, we know that  $\Gm_g\simeq P\ot_R (D_g)_{g^\m}\ot_R P^*= \T^P_g$,  for every   $g \in G$. By Remark \ref{sod} it remains to show that  the following diagram is commutative:
$$\xymatrix{\Gm_g\ot_R \Gm_h\ar[rr]^{f^\Gm_{g,h}}\ar[dd]_{\varphi_g \ot_R \varphi_h} & & 1_g \Gm_{gh}\ar[dd]^{ \varphi_{gh}}\\
& & \\
P\ot_R (D_g)_{g^\m}\ot_R P^*\ot_R P\ot_R (D_h)_{h^\m}\ot_R P^*\ar[rr]_{f_{g,h}^P} & & 1_g P\ot_R (D_{gh})_{(gh)^\m}\ot_R P^*,}$$
where, keeping in mind the $R^\alpha$-algebra isomorphism $\Delta(\G)\simeq \End_{R^\al}(P),$ the map   $f_{g,h}^\Gm$ is induced by the product in $\End_{R^\al}(P)$, that is, $f_{g,h}^\Gm(\xi_g\ot_R \xi_h)=1_g (\xi_g\circ \xi_h),$ the map $\varphi_g$  is given in  (\ref{defidevarphig}), and  $f_{g,h}^P$  is defined in (\ref{conjuntodefatoresparaP}). Let $\xi_g \in \Gm_g, \xi_h \in \Gm_h$ and    $\{p_i,f_i\}_{1\leq i\leq n}$  be a dual basis of the $R$-module  $P$. Then 
\begin{eqnarray*}
[f_{g,h}^P\circ (\varphi_g \ot_R \varphi_h)](\xi_g \ot \xi_h) & = & f_{g,h}^P \left( \dsum_{i,j}\xi_g(p_i)\ot_R 1_g\ot _Rf_i\ot_R \xi_h(p_j)\ot_R 1_h\ot_R f_j\right) \\
&\stackrel{\eqref{conjuntodefatoresparaP}} = & \dsum_{i,j} 1_g \xi_g(p_i)\ot _R\alpha_g(f_i(\xi_h(p_j))1_{g^\m})1_{gh}\ot_R f_j\\
&= & \dsum_{i,j} 1_g   \alpha_g(f_i(\xi_h(p_j))1_{g^\m})\xi_g(p_i) \ot _R 1_{gh}\ot_R f_j\\
& \stackrel{\eqref{send}}= &  \dsum_{i,j}  \xi_g(f_i(\xi_h(p_j))1_{g^\m}p_i) \ot _R 1_{gh}\ot_R f_j\\
& = &  \dsum_{i,j}  \xi_g(f_i(\xi_h(p_j)1_{g^\m})p_i) \ot _R 1_{gh}\ot_R f_j\\
& = &  \dsum_{j} \xi_g(\xi_h(p_j)1_{g^\m})\ot_R 1_{gh}\ot_R f_j\\
& = &  \dsum_{j} 1_{g}\xi_g(\xi_h(p_j))\ot_R 1_{gh}\ot_R f_j\\
& = &\varphi_{gh}(1_g ( \xi_g\circ \xi_h))\\
& = & (\varphi_{gh}\circ f_{g,h}^\Gm)(\xi_g \ot_R \xi_h),
\end{eqnarray*}
as desired. 
\end{dem}

Take $[P] \in \Pic_R(R).$  By  \cite[Proposition I.1.6]{demeyer1971separable} we have that  $P$ is an $R^\al$-progenerator. By, \cite[Proposition II. 4.1]{demeyer1971separable} we get that  $\End_{R^\al}(P)$ is an Azumaya $R^\al$-algebra. Moreover, 
$$C_{\End_{R^\al}(P)}(R)=\End_{R}(P)\simeq R.$$
Thus,  $R$ is a maximal commutative subalgebra of  $\End_{R^\alpha}(P)$. Consequently,   $(\End_{R^\alpha}(P))^{\op}$ is an Azumaya $R^\al$-algebra containing $R$ as a maximal commutative subalgebra.  Applying  Theorem \ref{Aisopcgp}  with $A= (\End_{R^\alpha}(P))^{\op}$ we conclude that there   exists a partial representation $\Gamma_P:G\to {\Pics}(R)$ endowed with a factor set, such that  $\End_{R^\al}(P)\simeq \D(\Gamma_P)$  as $R^\al$-algebras, and  $[\D(\Gamma_P)]\in \C(\T/R)$ thanks to Theorem~\ref{prop:grupoC}. Then Proposition \ref{teoc} gives us the group homomorphism
\begin{equation*} \label{Lcasogaloiscomutativo}
\mathcal{L}_0:  \Pic_R(R)\ni [P]  \mapsto [\D(\Gamma_P)]\in  \C(\T/R),
\end{equation*}
where $\D(\Gm_P)\simeq \End_{R^\al}(P)$ as $R^\al$-algebras.

The following result gives a relation between the groups $\C(\T/R)$ and  $\B(R/R^\al).$
\begin{teo}\label{exac}
	The sequence of group homomorphisms $$\xymatrix{\Pic_R(R)\ar[r]^{\mathcal{L}_0} &  \C(\T/R) \ar[r]^{\eta} &  \B(R/R^\al)\ar[r] &  1}$$ is exact,
where 
\begin{equation}\label{etta}\eta:\C(\T/R)\ni [\Delta(\Gamma)]\mapsto \llbracket \Delta(\Gamma) \rrbracket \in \B(R/R^\al).\end{equation} 
\end{teo}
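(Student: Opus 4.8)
The plan is to check, in turn, that $\eta$ is a well-defined group homomorphism, that it is surjective, and that $\ker\eta=\operatorname{im}\mathcal L_0$. For well-definedness I would note that for $[\D(\Gamma)]\in\C(\T/R)$ Proposition~\ref{CdentrodeB} gives that $\D(\Gamma)$ is an Azumaya $R^\al$-algebra containing $R$ as a maximal commutative subalgebra, so by Remark~\ref{brr} the class $\llbracket\D(\Gamma)\rrbracket$ indeed lies in $\B(R/R^\al)$. Since any isomorphism of partial generalized crossed products is in particular an $R^\al$-algebra isomorphism (it intertwines the products \eqref{prog} and fixes the copy of $R$ coming from $\Gamma_1$), the value $\eta([\D(\Gamma)])$ depends only on the class $[\D(\Gamma)]$.

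The homomorphism property is the step I expect to be the main obstacle. Using Theorem~\ref{prop:grupoC} I would write the two given classes as $[\D(f\T)]$ and $[\D(f'\T)]$ with $f,f'\in Z^1(G,\al^*,\Pics_R(R))$. The product formula \eqref{prodc} gives $\Lambda_g=\Gamma_g\ot_R(D_{g^\m})_g\ot_R\Omega_g$, and evaluating in $\Pics_R(R)$ with the unital relation \eqref{isot}, namely $\T(g)\T(g^\m)=[D_g]$ and $[D_g]f'(g)=f'(g)$ since $f'(g)\in\Pic(D_g)$, one finds $[\Lambda_g]=(ff')(g)\T(g)$, so the $\C(\T/R)$-product is $[\D((ff')\T)]$ with the pointwise cocycle product $ff'$. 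The remaining point is that passing to Brauer classes converts this into $\ot_{R^\al}$, that is $\llbracket\D((ff')\T)\rrbracket=\llbracket\D(f\T)\rrbracket\,\llbracket\D(f'\T)\rrbracket$. I would prove this by exhibiting a Brauer equivalence: the tensor product $\D(f\T)\ot_{R^\al}\D(f'\T)$ is Azumaya over $R^\al$, and since $R\supseteq R^\al$ is separable (as a partial Galois extension, by Remark~\ref{brr}) its separability idempotent, lying in $R\ot_{R^\al}R$, is a full idempotent $e$ whose corner satisfies $e\bigl(\D(f\T)\ot_{R^\al}\D(f'\T)\bigr)e\simeq\D((ff')\T)$. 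Morita theory then gives equality of Brauer classes. The delicate part is identifying this corner with the crossed product governed by the factor set \eqref{fla}.

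Surjectivity I would obtain from Theorem~\ref{Aisopcgp}. Given $\llbracket A\rrbracket\in\B(R/R^\al)$, Remark~\ref{brr} allows me to assume $R$ is a maximal commutative $R^\al$-subalgebra of $A$, and then Theorem~\ref{Aisopcgp} produces $\Gamma=f_A\T$ with $[\D(\Gamma)]\in\C(\T/R)$ and an $R^\al$-algebra isomorphism $\D(\Gamma)\simeq A^{\op}$. Hence $\eta([\D(\Gamma)])=\llbracket A^{\op}\rrbracket=\llbracket A\rrbracket^{\m}$. Since inversion is a bijection of the group $\B(R/R^\al)$, the image of $\eta$ exhausts $\B(R/R^\al)$, so $\eta$ is onto and the sequence is exact at $\B(R/R^\al)$.

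For exactness at $\C(\T/R)$ the inclusion $\operatorname{im}\mathcal L_0\subseteq\ker\eta$ is immediate, because by construction $\mathcal L_0([P])=[\D(\Gamma_P)]$ with $\D(\Gamma_P)\simeq\End_{R^\al}(P)$, a split Azumaya algebra whose Brauer class is trivial, whence $\eta\circ\mathcal L_0=1$. For the reverse inclusion I would take $[\D(\Gamma)]\in\ker\eta$, so $\D(\Gamma)$ represents the trivial class and therefore $\D(\Gamma)\simeq\End_{R^\al}(P)$ for some $R^\al$-progenerator $P$. Proposition~\ref{proparaL} then endows $P$ with an $R$-bimodule structure for which $[P]\in\Pic_R(R)$ and yields $R$-bimodule isomorphisms $\Gamma_g\simeq\T^P_g$ for all $g\in G$, and Proposition~\ref{teoc}(2) promotes these to the equality $[\D(\Gamma)]=[\D(\T^P)]=\mathcal L_0([P])$ in $\C(\T/R)$. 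This gives $\ker\eta\subseteq\operatorname{im}\mathcal L_0$, completing the proof.
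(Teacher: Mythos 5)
Your overall architecture agrees with the paper's: well-definedness via Proposition~\ref{CdentrodeB} and Remark~\ref{brr}, surjectivity via Theorem~\ref{Aisopcgp} together with closure of $\B(R/R^\al)$ under inversion, and exactness at $\C(\T/R)$ via Propositions~\ref{proparaL} and~\ref{teoc} -- those three parts are sound and essentially identical to the paper. The genuine gap is in the multiplicativity of $\eta$, which is exactly where the paper spends most of its proof. First, your claim that the $\C(\T/R)$-product of $[\D(f\T)]$ and $[\D(f'\T)]$ ``is $[\D((ff')\T)]$'' does not follow from the componentwise computation $[\Lambda_g]=(ff')(g)\T(g)$ in $\Pics_R(R)$: a class in $\C(\T/R)$ is an isomorphism class of partial generalized crossed products, and crossed products with isomorphic component bimodules need not be isomorphic, because an isomorphism must also intertwine the factor sets through diagram \eqref{morfismodepcgp0} -- this is precisely why the subgroup of classes with components isomorphic to the $\T_g$ is isomorphic to $H^2(G,\al,R)$ rather than trivial. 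One can transport a factor set along the componentwise isomorphisms (as in Remark~\ref{rem:final}), but then the factor set on $(ff')\T$ is pinned down by that choice, and everything downstream must be checked against it. The paper handles this by realizing $\D(f^\Gamma\T)$, $\D(f^\Omega\T)$ and $\D(f^\Gamma f^\Omega\T)$ inside Azumaya algebras $A$, $B$, $A\ot_{R^\al}B$ with $\phi_5(\llbracket A\rrbracket)={\rm cls}(f^\Gamma)$, $\phi_5(\llbracket B\rrbracket)={\rm cls}(f^\Omega)$ (using \cite[Lemma 6.6, Proposition 6.5]{DoPaPi2}), taking the factor sets given by reversed multiplication in these algebras, and then verifying \eqref{morfismodepcgp0} explicitly for the maps $F_g(u_g\ot_R d_{g^\m}\ot_R u'_g)=u_g\al_g(d_{g^\m})\ot_R u'_g$ against the product factor set \eqref{fla}.

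Second, your route to $\llbracket\D((ff')\T)\rrbracket=\llbracket\D(f\T)\rrbracket\,\llbracket\D(f'\T)\rrbracket$ rests entirely on the corner identification $e\bigl(\D(f\T)\ot_{R^\al}\D(f'\T)\bigr)e\simeq\D((ff')\T)$, which you assert and yourself flag as ``delicate'' without proving. That identification \emph{is} the content of the multiplicativity claim: in the partial setting the idempotents $1_g$, the twisted bimodules $(D_{g^\m})_g$ and the factor set \eqref{fla} all enter, and matching the corner with the specific factor set chosen above is of comparable difficulty to the paper's entire computation, so it cannot be cited away. Note that the paper avoids the Kanzaki-style Morita argument altogether: since $\phi_5$ is a group homomorphism, $\phi_5(\llbracket A\ot_{R^\al}B\rrbracket)={\rm cls}(f^\Gamma f^\Omega)$ (formula \eqref{fimor}), so Theorem~\ref{Aisopcgp} applied to $A$, $B$ and $A\ot_{R^\al}B$ gives $R^\al$-algebra isomorphisms $\D(f^\Gamma\T)\simeq A^{\op}$, $\D(f^\Omega\T)\simeq B^{\op}$ and $\D(f^\Gamma f^\Omega\T)\simeq(A\ot_{R^\al}B)^{\op}$, from which multiplicativity of $\eta$ follows once the factor-set computation above is done. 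Either complete the corner identification or adopt the paper's mechanism; as written, the central step of the theorem remains unproved.
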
 
\begin{dem} Firstly we check that $\eta$ is  a surjective homomorphisms of groups. By Theorem \ref{CdentrodeB},  the map  $\eta$ is well defined.  Take  $[\Delta(\Gamma)], [\Delta(\Omega)]\in \C(\T/R).$ By Theorem~\ref{prop:grupoC}  there are  $ f^\Gamma, f^\Omega \in Z^1(G,\al^*,\Pics _R (R))$  such that $ \Gamma=f^\Gamma\T$ and  $ \Omega=f^\Omega\T.$  
  Since $\Delta(\Gamma)$ and $\Delta(\Omega)$  are partial generalized crossed products, it follows by \cite[Lemma 6.6]{DoPaPi2} that there are 
 $\llbracket A \rrbracket, \llbracket B \rrbracket \in \B(R/R^\al)$ such that $ \phi_5(\llbracket A \rrbracket)={\rm cls}(f^\Gamma)$ and  $ \phi_5(\llbracket B \rrbracket)={\rm cls}(f^\Omega),$ respectively, where $\phi_5$  is the group homomorphism defined in \eqref{fi5}.  Then by  the proof of \cite[Proposition 6.5]{DoPaPi2}  we may assume that each $(f^\Gamma \T)_g$  is a subset of $A$ and each $( f^\Omega\T)_g$ is a subset of $B,$ and 
  the families
$$\mathcal F^{f^\Gamma }=\{\mathfrak f^{\Gamma }_{g,h}:(f^\Gamma \T)_g\ot_R(f^\Gamma\T)_h\ni u_g\ot_R u_h\mapsto u_hu_g\in 1_g(f^\Gamma \T)_{gh}\}_{g,h\in G},$$
and
$$\mathcal F^{ f^\Omega}=\{\mathfrak f^{\Om}_{g,h}:( f^\Omega\T)_g\ot_R(f^\Omega\T)_h\ni u'_g\ot_R u'_h\mapsto  u'_hu'_g\in 1_g( f^\Omega\T)_{gh}\}_{g,h\in G},$$ are factor sets 
of $\D(f^\Gamma \T) $ and  $\D(f^\Omega\T),$ respectively, where the product $u_hu_g$ is given in $A$ and the product $ u'_hu'_g$ is given in $B.$
 Furthermore, since 
\begin{equation}\label{fimor}\phi_5(\llbracket A \ot_{R^\alpha} B \rrbracket)={\rm cls}(f^\Gamma f^\Omega),\end{equation}  using again    \cite[Proposition 6.5]{DoPaPi2} we have that
$[\D(f^\Gamma f^\Omega\T) ]$ is a partial generalized crossed product with factor set given by:
$$\mathcal F^{f^\Gamma f^\Omega\T}=\{f^\Gamma f^\Omega\T_{g,h}:(f^\Gamma f^\Omega\T)_g\ot_R(f^\Gamma f^\Omega\T)_h\ni u''_g\ot_R u''_h\mapsto u''_hu''_g\in 1_g(f^\Gamma f^\Omega\T)_{gh}\}_{g,h\in G}.$$   
We shall show that
$[\Delta(\Gamma )][\Delta(\Omega)] = [\Delta(f^\Gamma \T)][\Delta(f^\Omega\T)]=[\Delta(f^\Gamma f^\Omega\T)].$ By \eqref{prodc}  we have
$$[\D(f^\Gamma\T)] [\D(f^\Omega\T)]=\left[\bigoplus\limits_{g\in G}f^\Gamma\T_g\ot _R(D_{g^\m})_g \ot_Rf^\Omega\T_g\right].$$ Consider the unital partial representation, $\Lambda
:G\ni g\mapsto[f^\Gamma\T_g\ot _R(D_{g^\m})_g \ot_Rf^\Omega\T_g] \in \Pics (R),$  then  using \eqref{fla} we get that  the family $f^\Lambda$ given by 
\begin{align*}f_{g,h}^\Lambda(u^\G_g\ot d_{g^{\m}}\ot_R v^\Omega_g\ot_R w^\G_h\ot_R d'_{h^{\m}}\ot_R x^\Omega_h)&= f_{g,h}^\G(u^\G_g\ot w^\G_h)\ot_Rd'_{h^{\m}}\al_{h^{\m}}(d_{g^{\m}}1_h)\ot_Rf_{g,h}^\Om( v^\Omega_g\ot_Rx^\Omega_h)\\
&= w^\G_hu^\G_g\ot_Rd'_{h^{\m}}\al_{h^{\m}}(d_{g^{\m}}1_h)\ot_R x^\Omega_hv^\Omega_g.
\end{align*} is a factor set  for $\Lambda.$
We need to construct a  family of $R$-bimodule homomorphisms $$F_g=\{F_g:f^\Gamma\T(g)\ot_R (D_{g^\m})_g\ot_R f^\Omega\T(g)\to f^\Gamma f^\Omega\T(g)\}_{g\in G}$$ such that the diagram \eqref{morfismodepcgp0} is commutative. \\
Take $g\in G,$  and let   $F_g:f^\Gamma\T(g)\ot_R (D_{g^\m})_g\ot_R f^\Omega\T(g)\to f^\Gamma f^\Omega\T(g)$ be  given by the chain of $R$-bimodule isomorphisms
\begin{align*}
f^\Gamma\T(g)\ot_R (D_{g^\m})_g\ot_R f^\Omega\T(g)&  \simeq  f^\Gamma(g)\ot_R (D_g)_{g^\m} \ot_R (D_{g^\m})_g\ot_R f^\Omega\T(g)\\
&\simeq f^\Gamma(g)\ot_R D_g \ot_R  f^\Omega\T(g)\\
&\simeq f^\Gamma(g)\ot_R f^\Omega\T(g)
\\& \simeq (f^\Gamma f^\Omega\T)(g) ,
\end{align*} where for the first  isomomorphism we consider that  induced by the map $\lambda$ given in \cite[p. 222]{DoPaPi2} and the second isomorphism comes from  \eqref{fte}.
Then  $F_g (u_g\ot_R  d_{g^{\m}}\ot_R  u'_g)$ is given by
\begin{align*} u_g\ot_R  d_{g^{\m}}\ot_R  u'_g \mapsto u_g\ot_R 1_g \ot_R   d_{g^{\m}}\ot_R  u'_g\mapsto u_g\ot_R \al_g(   d_{g^{\m}})\ot_R  u'_g\mapsto u_g \al_g(   d_{g^{\m}})\ot_R  u'_g.
\end{align*}

In order to show the commutativity of diagram \eqref{morfismodepcgp0} in our case denote 
$$\mathfrak c_{g,h}=F_{gh}\circ f_{g,h}^\Lambda(u^\G_g\ot_R d_{g^{\m}}\ot_R v^\Omega_g\ot_R w^\G_h\ot_R d'_{h^{\m}}\ot_R x^\Omega_h),$$ and 
$$\mathfrak c'_{g,h}=f^\Gamma f^\Omega\T_{g,h}\circ (F_g\ot_R F_h)(u^\G_g\ot_R d_{g^{\m}}\ot_R v^\Omega_g\ot_R w^\G_h\ot_R d'_{h^{\m}}\ot_R x^\Omega_h). $$
 Then 
\begin{align*}
\mathfrak c_{g,h}&=F_{gh}( w^\G_hu^\G_g\ot_Rd'_{h^{\m}}\al_{h^{\m}}(d_{g^{\m}}1_h)\ot_R x^\Omega_hv^\Omega_g)=w^\G_hu^\G_g\al_{gh}(d'_{h^{\m}}\al_{h^{\m}}(d_{g^{\m}}1_h))\ot_R x^\Omega_hv^\Omega_g
\\
&=w^\G_hu^\G_g\al_{g}(d_{g^{\m}})\al_{gh}(d'_{h^{\m}}1_{(gh)^{\m}})\ot_R x^\Omega_hv^\Omega_g,
.\end{align*}
while
\begin{align*}\mathfrak c'_{g,h}&= f^\Gamma f^\Omega\T_{g,h}(u^\G_g\al_g(d_{g^{\m}})\ot_R v^\Omega_g\ot_R w^\G_h\al_h( d'_{h^{\m}})\ot_R x^\Omega_h) =(w^\G_h\al_h( d'_{h^{\m}})\ot_R x^\Omega_h)(u^\G_g\al_g(d_{g^{\m}})\ot_R v^\Omega_g)
\\&\stackrel{(\ast)}=w^\G_h\al_h( d'_{h^{\m}})u^\G_g\al_g(d_{g^{\m}})\ot_R  x^\Omega_hv^\Omega_g
=w^\G_hu^\G_g\al_g(d_{g^{\m}})\al_{gh}( d'_{h^{\m}}1_{(gh)^{\m}})\ot_R  x^\Omega_hv^\Omega_g.
\end{align*}
where the  equality $(\ast)$ comes from the product in the $R^\al$-algebra $A\otimes_{R^\al} B.$  Thus $\mathfrak c_{g,h}=\mathfrak c'_{g,h},$ and
 $[\Delta(f^\Gamma \T)][\Delta(f^\Omega\T)])=[\Delta(f^\Gamma f^\Omega\T)].$ By Theorem \ref{Aisopcgp}  we get $\eta([\Delta(f^\Gamma\T)])=\llbracket A^{\rm op} \rrbracket$ and $\eta([\Delta(f^\Omega\T)])=\llbracket B^{\rm op} \rrbracket .$ Then by \eqref{fimor}, using again  Theorem \ref{Aisopcgp}, we obtain   that
$$\eta([\Delta(\Gamma)][\Delta(\Omega)])=\eta([\Delta(f^\Gamma f^\Omega\T)]=\llbracket (A \otimes_{R^\alpha} B)^{\rm op} \rrbracket=\llbracket A ^{\rm op} \rrbracket\llbracket B^{\rm op} \rrbracket=\eta([\Delta(\Gamma)])\eta([\Delta(\Omega)]),$$  concluding that $\eta$ is a homomorphism. To show that $\eta$ is an epimorphism, take $\llbracket A \rrbracket \in \B(R/R^\al).$ By Theorem \ref{Aisopcgp} there exists $[\D(\Gm)] \in \C(\T/R)$ with $\eta([\D(\Gm)])=\llbracket A^{\rm op} \rrbracket.$ Thus  $\llbracket A \rrbracket^{-1}=\llbracket A^{\rm op} \rrbracket \in {\rm im}(\eta),$ and  $\llbracket A\rrbracket \in {\rm im}(\eta),$ showing that  $\eta$ is surjective. 
It remains to prove that  $\ker(\eta)= {\rm im}(\mathcal{L}_0).$ Take  $[\D(\Gm)] \in {\rm Im}(\mathcal{L}_0)$,  there exists $[P]\in \Pic_R(R)$ such that $\mathcal{L}_0([P])=[\D(\Gm)]$, where  $\D(\Gm)\simeq \End_{R^\al}(P)$, as  $R^\al$-algebras.
Hence, 
$$\eta(\mathcal{L}_0([P]))=\llbracket\D(\Gm)\rrbracket=\llbracket\End_{R^\al}(P)\rrbracket=\llbracket R^\al\rrbracket$$
 in  $\B(R/R^\al),$ which implies ${\rm Im}(\mathcal{L}_0)\subseteq \ker(\eta)$.   Conversely, if $[\D(\Gm)] \in \ker(\eta),$ then there exists a  faithfull f.g.p  $R^\al$-module $P$ such that $\D(\Gm)\simeq \End_{R^\al}(P)$ as $R^\al$-algebras. Since, by \cite[Corollary 1.1.10]{demeyer1971separable} $P$  is a $R^\alpha$-progenerator, then   by  Propositions \ref{proparaL} and \ref{teoc}  there is a $R$-module structure on $P$ such that $[P]\in \Pic_R(R)$ and 
$\D(\Gm)\simeq \D(\T^P)$ as partial generalized crossed products. Then  $\mathcal{L}_0([P])=[\D(\T^P)]=[\D(\Gm)].$
	Therefore, $\ker(\eta)\subseteq {\rm Im}(\mathcal{L}_0),$ completing our proof. \end{dem}

\section{Comparing the seven-term exact sequences}\label{sec:Comparing}
In this section we shall show that the seven-term exact sequence for partial Galois extensions of commutative rings (see Theorem \ref{galoise} below), can be obtained from the main result of \cite{DoRo}. 
First for the reader's convenience, we   recall  some notions and notation.  We fix an extension of  non-necessarily commutative rings  $R'\subseteq S,$ and   a unital  partial representation $G\ni g\mapsto \G_g\in  \mathcal{S}_{R'}(S),$ in the semigroup of $R'$-subbimodules of $S.$ Write $\G :G \ni g\mapsto [\G_g]\in  \Pics(R')$ and 
  \begin{equation}\label{PicsZero}
	\Pics_0(R')=\{[P]\in \Pics(R'): \ P|R' \ \mbox{as bimodules} \}.
\end{equation}
Then by \cite[Proposition 3.11]{DoRo} $\Gamma$ induces a unital partial action   $\overline{\al}$ of  $G$ on $\Z=\Z(R').$ Moreover, let $ \overline\al^*$ be the partial action of $G$ on $\Pics_0(R')$ presented in \cite[p. 56]{DoRo}.  Consider the group  $\Pic_\Z(R')^{(G)} $ given in the proof of  Proposition \ref{teoc},  and define  $\mathcal{B}(\Gamma/R')$ by the exact sequence  
\begin{equation}\label{GroupB}
	\xymatrix{ \Pic_\Z(R')^{(G)}\ar[r]^-{\mathcal{L}} & \C(\Gamma/R')\ar[r]& \mathcal{B}(\Gamma/R')\ar[r] & 1 },
\end{equation}
 that is $	\mathcal{B}(\Gamma /R')=\dfrac{\C(\Gamma /R')}{\mbox{Im}(\mathcal{L})},$ where
$\mathcal{L}: \Pic_\Z(R')^{(G)}\ni [P]\mapsto [\Delta(\Omega^P)]\in  \C(\Gamma/R') $ is the group homomorphism defined in  \cite[Theorem 5.10]{DoRo}. 

We also need  the group  $\overline{H^1}(G,\al^*,\Pics_0(R'))$  defined  by the exact sequence 
$$\xymatrix{  \Pic_\Z(R')^{(G)}\ar@{-->}[rr]\ar[rd]_{\mathcal{L}} & & Z^1(G,\overline\al^*,\Pics_0(R'))\ar[r] &   \overline{H^1}(G,\al^*,\Pics_0(R'))\ar[r] & 1\\
	& \C(\Gamma/R')\ar[ru]_{\zeta}& & &    }$$
that is,
\begin{equation}\label{GroupOverlineH}
	\overline{H^1}(G,\overline\al^*,\Pics_0(R'))=\dfrac{Z^1(G,\overline\al^*,\Pics_0(R'))}{(\zeta\circ\mathcal{L})(\Pic_\Z(R')^{(G)})},
\end{equation}
being $\zeta: \C(\Gamma/R')\ni [\Delta(\Omega)]\mapsto f^\Omega \in Z^1(G,\overline\al^*,\Pics_0(R'))$ the group homomorphism defined in \cite[Lemma 5.21]{DoRo}, and 
\begin{equation}\label{fom}f^\Omega: G\ni g\mapsto [\Omega_g\otimes_{R'} \Gamma_{g^{\m}} ] \in \Pics_0(R'),\end{equation} 
as defined in \cite[p. 56]{DoRo}.

 Suppose that  the partial representation $\G$ is endowed with a factor set, and consider the ring extension $R'\subseteq \D(\G).$
Let us denote by $\Aut_{R'\mbox{-rings}}(\D(\Gamma))$ the unit group of the monoid ${\rm End}_{R'\mbox{-rings}}(\D(\Gamma))$ of all ring endomorphisms which act by identity on $R'.$ Write 
\begin{equation*}
	\Aut_{R'\mbox{-rings}}(\D(\Gamma))^{(G)}=\{f\in \Aut_{R'\mbox{-rings}}(\D(\Gamma)); \ f(\G_g)=\G_g, \ \forall \ g\in G\}.
\end{equation*}
Obviously, $\Aut_{R'\mbox{-rings}}(\D(\Gamma))^{(G)}$ is a subgroup of $\Aut_{R'\mbox{-rings}}(\D(\Gamma))$.  Let  $\p_\Z(\D(\Gamma)/R')^{(G)}$ be the group  defined in \eqref{pz}, and consider the group homomorphism $\mathcal{E}: \Aut_{R\mbox{-rings}}(\D(\Gamma)) \to \p(\D(\Gamma)/R')$ which appears in the first row of the commutative diagram given by \cite[Lemma 5.4]{DoRo}. It is proven in that Lemma that 
$$\mathcal{E}(f)=\xymatrix@C=1.2cm{ [R]\ar@{=>}[r]|{[\iota_f]} & [\Delta(\Gamma)_f]} \in \p_\Z(\D(\Gamma)/R)^{(G)},$$ for all $f\in \Aut_{R'\mbox{-rings}}(\D(\Gamma))^{(G)},$ where  the map $\iota_f$ is the canonical inclusion of $R$-bimodules 
and $\Delta(\Gamma)_f$ is the left  $\Delta(\Gamma)$-module  $\Delta(\Gamma)$ considered as a  right $\Delta(\Gamma)$ -module via $s_fs'=sf(s'),$ for all  $s_f\in \Delta(\Gamma)_f$ and $s'\in \Delta(\Gamma).$

We shall recall the morphisms of the seven-term exact sequence constructed in \cite{DoRo}.

\noindent {\bf The morphism $\varphi_1.$} The map $\varphi_1$ is constructed  using the commutative diagram in \cite[Lemma 5.4]{DoRo}, and  is defined as follows:
$$ \begin{array}{c c c l}\varphi_1: &   H_\Gamma^{1}(G, \overline\al,\Z) & \longrightarrow &  \p_\Z(\D(\Gamma)/R')^{(G)}\\ 
&{\rm cls}(\widetilde f) & \longmapsto & \mathcal{E}(f)=\xymatrix@C=1.2cm{ [R']\ar@{=>}[r]|{[\iota_f]} & [\Delta(\Gamma)_f]} 
		\end{array},$$
where the relation between the $1$-cocycle $ \widetilde f$ and $f$ presented in the proof of \cite[Theorem 5.5]{DoRo}. It is shown in \cite[p. 40-41]{DoRo} that $f$   is the   unique element in $ \Aut_{R'\mbox{-rings}}(\D(\Gamma))^{(G)}$ such that 

\begin{equation}\label{forf}f:\Gamma_g\ni u_g\mapsto \widetilde f(g)u_g,\in \Gamma_g,\end{equation} for all $g\in G. $ 

\noindent {\bf The morphism $\varphi_2.$} By \cite[Proposition 5.3]{DoRo} we have
$$ \begin{array}{c c c l}\varphi_2: &  \p_\Z(\D(\Gamma)/R')^{(G)}& \longrightarrow &  \Pic_\Z(R')\cap \Pics_\Z(R')^{\al^*}\\ &\xymatrix@C=1.2cm{ [P]\ar@{=>}[r]|{[\phi]} & [X]}  & \longmapsto & [P]
		\end{array}.$$

\noindent {\bf The morphism $\varphi_3.$} It is proved in \cite[Proposition 4.2]{DoRo} that the set $$\C_0(\G/R')=\{[\D(\Omega)]\in C(\G/R')\mid \Omega_g\simeq \G_g, \;\text{as} \; R'\text{-bimodules} \; \forall g\in G \}$$ is a subgroup of  $C(\G/R').$ The homomorphism $\varphi _3$  is defined in \cite[p. 45]{DoRo} by 
$$\varphi_3:  \Pic_\Z(R')\cap \Pics_\Z(R')^{\al^*}\ni  [P]\mapsto [\Delta(\Gamma^P)]\in C_0(\G/R'),$$
 where $\Delta(\Gamma^P)=\bigoplus_{g\in G}\Gamma^P_g,$ and $\Gamma^P_g=P\otimes_{R'}\Gamma_g\otimes_{R'} P^*.$

 We shall work with the map
\begin{equation*}\label{fi3}
		\begin{array}{c c c l}
			\zeta_0 \circ \varphi_3: & \Pic_\Z(R')\cap \Pics_\Z(R')^{\al^*} & \longrightarrow &  H_\Gamma^2(G,\overline\al,\Z)\\
			& [P] & \longmapsto &{\rm cls} (\widetilde\tau_{-,-}),
		\end{array}
\end{equation*}
where  $\zeta_0: C_0(\G/R')\to H_\Gamma^2(G,\overline\al,\Z) $ is the group isomorphism  of  \cite[Theorem 4.4]{DoRo}.

\noindent {\bf The morphism $\varphi_4.$} We have
\begin{equation*}
\begin{array}{c c c l}
			\varphi_4: &  H_\Gamma^2(G,\overline\al,\Z)& \longrightarrow & \mathcal{B}(\Gamma /R') \\
			& {\rm cls}( \omega) & \longmapsto & [\D(\Si)]{\rm Im}\,\mathcal L,
		\end{array}
\end{equation*}
where the relation between $\omega$ and the partial generalized crossed product $\D(\Si)$ is given in the proof of \cite[Theorem 4.4]{DoRo} as follows.  Set $\Si_g:=\G_g $ (see \cite[p. 35]{DoRo}). Then the family
$$\{f^\Si_{g,h}:\Si_g\ot_{R'}\Si_h\ni u_g\ot_{R'}u_h\mapsto  \omega_{g,h}f_{g,h}^\G(u_g\ot_{R'}u_h)\}_{g,h\in G}$$ is a factor set for $\Si $ and $[\D(\Si)]\in C_0(\G/R')$ is such that $\zeta([\D(\Si)])={\rm cls}( \omega).$

In  order to recall the last two morphisms we give the following.

\noindent {\bf Notation:} For a  morphism $f:G_1\to G_2$ of abelian groups,  the co-restriction of $f$ is the canonical map $f^c:G_2\to G_2/{\rm im}f.$ We also denote by $f^*$ the morphism  $f^*(x)=f(x^\m),$ for all $x\in G_1.$

\noindent {\bf The morphism $\varphi_5.$} The map  $\varphi_5:\mathcal{B}(\G/R)\to \overline{H^1}(G,\al^*,\Pics_0(R))$ is introduced in \cite[p. 59]{DoRo} by means of the following commutative  diagram:

\begin{equation*}
	\xymatrix{ & \C_0(\G/R)\ar[rd]^{\varphi_4}\ar@{_(->}[d] & & \\
		\Pic_\Z(R')^{(G)} \ar[r]^{\mathcal{L}} & \C(\G/R')\ar[r]^{\mathcal{L}^c}\ar[d]_{\zeta} & \mathcal{B}(\G/R) \ar[r]\ar@{-->}[d]^{\varphi_5} & 1\\
		& Z^1(G,\al^*,\Pics_0(R'))\ar[r]^{(\zeta \mathcal{L})^c} & \overline{H^1}(G,\al^*,\Pics_0(R'))\ar[r] & 1.   } \label{definicaodevarphi5}.
\end{equation*}

\noindent {\bf The morphism $\varphi_6.$} Let $
			\delta:  Z^1(G,\al^*,\Pics_0(R'))\ni f \mapsto  {\rm cls}\left(\widetilde{\beta_{-,-,-}^f}\right)\in H_\G^3(G,\al, \Z)$ be the morphism defined in \cite[Lemma 5.25]{DoRo}, and the cocycle $\widetilde{\beta_{-,-,-}^f}$ is defined  using
\cite[Proposition 3.17 ]{DoRo}  and  \cite[Lemma 3.10]{DoRo}. Then the homomorphism  $\varphi_6:\overline{H^1}(G,\al^*,\Pics_0(R'))\longrightarrow H^3_\G (G,\al,\Z)$ is defined  via the commutative diagram:
\begin{equation*}
	\xymatrix{ \Pic_\Z(R')^{(G)}\ar[r]^{\zeta\mathcal{L}\ \ \ \ \ \ \ } & Z^1(G,\al^*,\Pics_0(R'))\ar[r]^{(\zeta\mathcal{L})^c}\ar[d]_{\delta} & \overline{H^1}(G,\al^*,\Pics_0(R'))\ar[r]\ar@{-->}[ld]^{\varphi_6}& 1\\
		& H^3_\G (G,\al,\Z). & &   }
\end{equation*}

Using the above morphisms $\varphi_i, 1\leq i \leq 6,$  we have the next.

\begin{teo}\label{md}\cite[Theorem 5.27]{DoRo} \label{theo:main}  The sequence of group homomorphisms

	\begin{equation*}
		\xymatrix{1\ar[r] &  H_\Gamma^{1}(G, \overline\al,\Z)\ar[r]^-{\varphi_1} &  \p_\Z(\D(\Gamma)/R')^{(G)}\ar[r]^-{\varphi_2} & \Pic_\Z(R')\cap\Pics_\Z(R')^{\al^*}\ar[r]^-{\varphi_3} & H_\Gamma^2(G,\overline\al,\Z)\\ 
			\ar[r]^-{\varphi_4}& \mathcal{B}(\Gamma/R')\ar[r]^-{\varphi_5} & 	 \overline{H}^1(G,\overline\al^*,\Pics_0(R'))\ar[r]^-{\varphi_6} & H^3_\Gamma(G,\overline\al,\Z)  }
	\end{equation*}
	is exact.

\end{teo}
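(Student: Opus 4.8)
The plan is to establish exactness of the seven-term sequence one junction at a time, following the template of Miyashita's classical argument \cite{miyashita1973exact} (and its local-units refinement of El Kaoutit--G\'omez-Torrecillas \cite{el2012invertible}), adapted to the partial setting in which every factor set and every cocycle carries the central idempotents $1_g$. The well-definedness and the homomorphism property of each $\varphi_i$ are already secured by the cited results (\cite[Lemma 5.4, Proposition 5.3, Theorem 4.4, Lemma 5.25]{DoRo} together with the defining diagrams of $\varphi_5$ and $\varphi_6$), so what remains is to verify $\mathrm{im}\,\varphi_i=\ker\varphi_{i+1}$ at each of the six intermediate terms, along with injectivity of $\varphi_1$.

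First I would treat the ``low'' part $\varphi_1,\varphi_2,\varphi_3$, governed by the relative Picard theory of the extension $R'\subseteq\D(\G)$. For injectivity of $\varphi_1$, if $\mathcal{E}(f)$ is trivial in $\p_\Z(\D(\G)/R')^{(G)}$, then the twisted bimodule $\D(\G)_f$ is isomorphic to $\D(\G)$ over $R'$, which forces the $1$-cocycle $\widetilde f$ of \eqref{forf} to be a partial coboundary, so $\mathrm{cls}(\widetilde f)=1$. For exactness at $\p_\Z(\D(\G)/R')^{(G)}$ I would use that $\varphi_2$ records the underlying $[P]\in\Pic_\Z(R')$: an element of $\ker\varphi_2$ has $[P]=[R']$, hence is represented by an object $[R']\R[X]$, and such objects are exactly the images $\mathcal{E}(f)$ of $f\in\Aut_{R'\mbox{-rings}}(\D(\G))^{(G)}$, giving $\ker\varphi_2=\mathrm{im}\,\varphi_1$. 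Exactness at $\Pic_\Z(R')\cap\Pics_\Z(R')^{\al^*}$ is the obstruction-free step of $\varphi_3$: here $\ker\varphi_3$ consists of those $[P]$ for which $\D(\G^P)\simeq\D(\G)$ as partial generalized crossed products (equivalently $\mathrm{cls}(\widetilde\tau)=1$ in $H^2_\G$), and these are precisely the $[P]$ that extend to an object $[P]\R[X]$ of $\p_\Z(\D(\G)/R')^{(G)}$, i.e.\ those lying in $\mathrm{im}\,\varphi_2$.

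Next I would handle the ``high'' part $\varphi_4,\varphi_5,\varphi_6$, which rests on $\B(\G/R')=\C(\G/R')/\mathrm{im}\,\mathcal L$ and on obstruction theory for lifting cocycles to crossed products. For exactness at $H^2_\G$, a class $\mathrm{cls}(\om)$ lies in $\ker\varphi_4$ iff the twisted product $\D(\Si)$ (with $\Si_g=\G_g$ and factor set $\om\cdot f^\G$) is trivial in $\B(\G/R')$, i.e.\ $[\D(\Si)]\in\mathrm{im}\,\mathcal L$; since $\zeta_0$ is the isomorphism $\C_0(\G/R')\to H^2_\G$ of \cite[Theorem 4.4]{DoRo}, this happens exactly when $\om$ is cohomologous to an obstruction cocycle $\widetilde\tau$ coming from some $[P]$, which is $\mathrm{im}(\zeta_0\circ\varphi_3)$. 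Exactness at $\B(\G/R')$ follows from the defining diagram of $\varphi_5$: as $\varphi_5$ is induced by $\zeta\colon\C(\G/R')\to Z^1(G,\overline\al^*,\Pics_0(R'))$, its kernel is the image in $\B$ of $\C_0(\G/R')$ (those $[\D(\Om)]$ with $\Om_g\simeq\G_g$, equivalently $f^\Om$ the trivial cocycle, since $[\Om_g\ot_{R'}\G_{g^\m}]=[R'1_g]$), and $\C_0(\G/R')$ is exactly $\mathrm{im}\,\varphi_4$. Finally, exactness at $\overline{H}^1(G,\overline\al^*,\Pics_0(R'))$ reduces to showing that a cocycle $f\in Z^1(G,\overline\al^*,\Pics_0(R'))$ lifts along $\zeta$ to $\C(\G/R')$---that is, $\Om=f\G$ admits a factor set---if and only if its obstruction $3$-cocycle $\widetilde{\beta^f}$ vanishes in $H^3_\G$, which is precisely $\ker\varphi_6=\mathrm{im}\,\varphi_5$.

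The main obstacle I anticipate is the obstruction-theoretic heart of the argument, namely the exactness at $\overline{H}^1$ through $\varphi_6$, and to a lesser degree at $\B(\G/R')$. In the partial non-commutative setting the associativity of the candidate factor set for $\Om=f\G$ fails by a measurable amount, and one must show that this failure is governed exactly by the class $\widetilde{\beta^f}\in H^3_\G(G,\overline\al,\Z)$ constructed in \cite[Lemma 5.25]{DoRo} from the coherence data of \cite[Proposition 3.17, Lemma 3.10]{DoRo}. Tracking these coherence isomorphisms through the tensor products over $R'$---each truncated by the idempotents $1_g$ and rearranged via the flip isomorphism $T_{-,-}$ of Proposition~\ref{isomorfismoT}---is where the bulk of the technical labor lies; the commutative and global cases sidestep this because there the relevant bimodules are central and the idempotents are trivial.
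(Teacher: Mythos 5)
The first thing to say is that this paper contains no proof of Theorem~\ref{md} at all: the statement is imported verbatim as \cite[Theorem 5.27]{DoRo} and used as a black-box input for Theorem~\ref{galoise}. So there is no ``paper's own proof'' to compare against; the proof lives in the reference, where it occupies the bulk of that article. Judged on its own terms as a blind reconstruction, your outline does follow the correct global strategy (the Miyashita-style junction-by-junction verification, with well-definedness of the $\varphi_i$ delegated to the cited lemmas of \cite{DoRo}), and you correctly locate the technical center of gravity in the obstruction-theoretic steps around $\varphi_5$ and $\varphi_6$.

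However, as a proof the proposal has genuine gaps: at every junction the exactness claim is settled by an asserted equivalence which is itself the substantive content of the theorem. Concretely: that a trivialization of $\D(\G)_f$ in $\p_\Z(\D(\G)/R')^{(G)}$ ``forces'' $\widetilde f$ to be a partial coboundary; that objects $[R']\R[X]$ in $\p_\Z(\D(\G)/R')^{(G)}$ are ``exactly'' the images $\mathcal{E}(f)$ (this needs the $(G)$-invariance condition to produce an automorphism preserving each $\G_g$, a point where the partial idempotents $1_g$ enter nontrivially); that $\ker\varphi_3$ is ``precisely'' the set of $[P]$ extending to objects of $\p_\Z(\D(\G)/R')^{(G)}$; that $\ker\zeta$ equals $\C_0(\G/R')$; and that $f\G$ admits a factor set if and only if the class of $\widetilde{\beta^f_{-,-,-}}$ vanishes in $H^3_\G(G,\overline\al,\Z)$ --- this last one you explicitly defer. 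None of these is proved, and each requires the detailed bimodule-and-idempotent computations that constitute \cite{DoRo}. There is also a concrete unaddressed mismatch in your argument for exactness at $H^2_\G(G,\overline\al,\Z)$: the map $\mathcal{L}$ is defined on $\Pic_\Z(R')^{(G)}$, whereas $\varphi_3$ is defined on $\Pic_\Z(R')\cap\Pics_\Z(R')^{\al^*}$, so identifying $\ker\varphi_4$ with $\mathrm{im}(\zeta_0\circ\varphi_3)$ tacitly requires comparing these two subgroups of $\Pic_\Z(R')$ --- a nontrivial step in the partial setting, not a bookkeeping remark. In short, what you have is a faithful table of contents for the proof in \cite{DoRo}, not a proof.
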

Now we are ready to show that if $R\supseteq R^\alpha$ is  a partial Galois extension of commutative rings,  then Theorem \ref{md} recovers the main result of \cite{DoPaPi2}. That is we have the next.
\begin{teo}\label{galoise} Let $R\supseteq R^\alpha$ be a partial Galois extension of commutative rings, then there exists an exact sequence of group homomorphisms
\begin{align}\label{exact}
&0\longrightarrow H^1(G,\alpha , R){\stackrel{\phi_1}\longrightarrow} {\bf Pic}_{R^\alpha}(R^\alpha ){\stackrel{\phi_2}\longrightarrow}{\bf PicS}_R(R)^{\alpha^*}\cap {\bf Pic}_R(R) {\stackrel{\phi_3}\longrightarrow }H^2(G,\alpha, R) {\stackrel{\phi_4}\longrightarrow} B(R/R^\alpha){\stackrel{\phi_5}\longrightarrow}\\
& H^1(G,\alpha^*,{\bf PicS}_R(R)){\stackrel{\phi_6}\longrightarrow}   
H^3 (G,\alpha , R).\nonumber
\end{align} 
\end{teo}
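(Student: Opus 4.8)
The plan is to specialize Theorem~\ref{md} to the data $R'=R$, $S=R\star_\al G$ and $\G=\T$, and then to match, one by one, its seven terms and six connecting homomorphisms with the ingredients of \eqref{exact}; exactness of \eqref{exact} is then inherited from Theorem~\ref{md}. Since $R$ is commutative, $\Z=\Z(R)=R$; by Lemma~\ref{pcgpisoskew} the crossed product $\D(\T)$ is isomorphic to $S=R\star_\al G$ as $R^\al$-algebras, and by Remark~\ref{coigual} the induced partial action $\overline\al$ equals $\al$, so that $H^n_\T(G,\overline\al,\Z)=H^n(G,\al,R)$ for every $n$. This matches the first, fourth and seventh terms exactly.

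First I would dispose of the remaining terms. Theorem~\ref{isopic} provides a group isomorphism $\xi\colon\Pic_{R^\al}(R^\al)\to\p_R(S/R)^{(G)}=\p_\Z(\D(\T)/R)^{(G)}$, matching the second term. The third term $\Pic_R(R)\cap\Pics_R(R)^{\al^*}$ coincides with the commutative one once $\Z=R$. For the fifth term, Proposition~\ref{teoc}(1) identifies the map $\mathcal L$ of \eqref{GroupB} with the map $\mathcal L_0$ of Theorem~\ref{exac}, whence $\mathcal B(\T/R)=\C(\T/R)/{\rm im}\,\mathcal L_0\cong B(R/R^\al)$, the isomorphism being induced by $\eta$. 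The subtlest identification is the sixth term. I would first observe that over a commutative ring a bimodule $P$ with $P\mid R$ is automatically central and finitely generated projective, so $\Pics_0(R)=\Pics_R(R)$. Then, using the unital isomorphisms \eqref{isot}, the description \eqref{cong} of $\al^*$, and the twist relation $[\T_g\ot_R Q]=[\al_g^*(Q)\ot_R\T_g]$ (valid for central $Q$ with $1_{g^\m}Q=Q$ in the commutative monoid $\Pics_R(R)$), I would compute $(\zeta\circ\mathcal L)([P])(g)=[P\ot_R\T_g\ot_RP^*\ot_R\T_{g^\m}]=\delta^0([P^*])(g)$; since $[\,\cdot\,]\mapsto[\,\cdot\,]^*$ permutes $\Pic_R(R)$, this yields ${\rm im}(\zeta\circ\mathcal L)=B^1(G,\al^*,\Pics_R(R))$ and hence $\overline{H^1}(G,\al^*,\Pics_0(R))=H^1(G,\al^*,\Pics_R(R))$.

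With the groups matched, I would arrange the seven isomorphisms into a vertical ladder between \eqref{exact} and the specialized sequence of Theorem~\ref{md}, and verify that each square commutes, i.e.\ that the maps $\phi_i$ coming from \cite{DoPaPi2} are carried to the maps $\varphi_i$ of \cite{DoRo}. The squares for the cohomology-valued maps $\varphi_1,\varphi_3,\varphi_6$ follow from their explicit cocycle descriptions together with the computation just made, while the square for $\varphi_2$ reduces to the fact that $\xi$ is base change $[P_0]\mapsto[P_0\ot_{R^\al}R]$. Once all six squares commute, the exactness of \eqref{exact} is an immediate consequence of Theorem~\ref{md}.

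The hard part will be the two squares flanking the Brauer group, those involving $\varphi_4$ and $\varphi_5$. There one must reconcile the two a priori different presentations of the fifth term — as $\mathcal B(\T/R)$ via $\mathcal L_0$ and as the relative Brauer group $B(R/R^\al)$ via $\eta$ — while tracing a normalized $2$-cocycle through the construction $\om\mapsto\D(\Si)$ of \cite{DoRo} and through the map $\phi_4$ of \cite{DoPaPi2}, and keeping careful account of the passage to the opposite algebra forced by Theorem~\ref{Aisopcgp}, where $\D(\Gm)\simeq A^{\op}$. I expect the principal difficulty to lie in the bookkeeping of these compatibilities rather than in any single new idea, since every structural input is already supplied by the previous sections.
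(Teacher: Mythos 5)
Your overall strategy is exactly the paper's: specialize Theorem~\ref{md} to $R'=R$, $S=R\star_\al G$, $\G=\T$, identify the seven groups (via Remark~\ref{coigual}, Theorem~\ref{isopic}, Proposition~\ref{teoc}(1) together with Theorem~\ref{exac}, and the computation ${\rm im}(\zeta\circ\mathcal L)=B^1(G,\al^*,\Pics_R(R))$ forcing $\overline{H^1}(G,\overline\al^*,\Pics_0(R))=H^1(G,\al^*,\Pics_R(R))$), and then transfer the morphisms. All of your group identifications agree with the paper's, including the observation that $\Pics_0(R)=\Pics_R(R)$ and that $\al^*=\overline\al^*$ via \eqref{cong}.

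The one step that would fail as literally stated is ``verify that each square commutes, i.e.\ that the maps $\phi_i$ are carried to the maps $\varphi_i$.'' Three of the six squares do \emph{not} commute on the nose: the paper's computations yield
$(\xi^{\m}\circ\varphi_1)({\rm cls}(\widetilde f))=\phi_1({\rm cls}(\widetilde f)^{\m})$,
$(\zeta_0\circ\varphi_3)([P])=\phi_3([P^*])=\phi_3([P]^{\m})$, and
$(\varphi_5\circ\bar\eta^{\m})(\llbracket A\rrbracket)=\phi_5(\llbracket A\rrbracket^{\m})$,
so that with the notation $f^*(x)=f(x^{\m})$ one only gets $\phi_1=(\xi^{\m})^*\circ\varphi_1$, $\phi_3=\zeta_0^*\circ\varphi_3$ and $\phi_5=\varphi_5\circ(\bar\eta^{\m})^*$, while $\phi_2=\varphi_2\circ\xi$, $\phi_4=\bar\eta\circ\varphi_4$ and $\phi_6=\varphi_6$ hold untwisted. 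The twists are not bookkeeping noise you can define away: for $\varphi_1$ and $\varphi_3$ they come from opposite cocycle conventions in the two papers (you already met this phenomenon in your own computation, where $\delta^0([P^*])$, not $\delta^0([P])$, appeared), and for $\varphi_5$ from Theorem~\ref{Aisopcgp}, which gives $\D(\Gm)\simeq A^{\op}$, hence $\eta([\D(\Gm)]^{\m})=\llbracket A\rrbracket$. The repair is cheap — inversion is an automorphism of each abelian group commuting with every homomorphism, so the twisted ladder still transfers exactness — but it must be made explicit, since your proposed conclusion ``the $\phi_i$ are carried to the $\varphi_i$'' is false for $i=1,3,5$. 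Finally, do not underestimate the $\varphi_3$-square: it is not a formal consequence of cocycle descriptions but requires constructing an explicit isomorphism of partial generalized crossed products $[\D(\T^P)]=[\D(\Si)]$ out of the isomorphisms $\psi_g\colon P1_g\to (P1_{g^\m})_g$ witnessing $\al^*$-invariance of $[P]$, and checking the factor-set diagram against the $2$-cocycle $\mathfrak b_{g,h}=\al_{gh}(\om_{h^\m,g^\m})$ cohomologous to $\om^{\m}_{-,-}$; this is the longest verification in the paper's proof.
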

\begin{dem} We shall obtain the sequence \eqref{exact} by taking in the sequence of Theorem~\ref{theo:main}   the partial representation $\Gamma $ to be equal to   the unital partial representation $\T $ defined in \eqref{definicaodeT0}. Firstly, we check that in this case   the groups in  \eqref{exact} are either  equal, or isomorphic to the groups of the sequence of Theorem \ref{md}.  Indeed, since $\Z=\Z(R)=R,$ it follows by Remark \ref{coigual} that the groups $H^n(G,\alpha , R)$ and $H_\Gamma^{n}(G, \overline\al,\Z)$ coincide for $n\in \{1,2,3\}.$ Moreover,  it is clear that  $\Pics_0(R)\supseteq \Pics_R(R),$ and thus the equality  $\Pics_0(R)=\Pics_R(R)$ follows  by \cite[Lemma 5.20 (ii)]{DoRo}.  Furthermore, by \eqref{cong} the partial actions $\alpha^*$ and $\overline\al^*$ from \cite[p. 56]{DoRo} coincide, which implies  $ Z^1(G,\overline\al^*,\Pics_0(R))=Z^1(G,\alpha^*,{\bf PicS}_R(R))$ and   $ B^1(G,\overline\al^*,\Pics_0(R))=B^1(G,\alpha^*,{\bf PicS}_R(R)).$ Moreover,  as observed in the proof of item  (1)  of Proposition  \ref{teoc} we have that 
$\Pic_\Z(R)^{(G)} =  \Pic_R(R)$ and $\mathcal L=
\mathcal L_0.$  Then
\begin{align*}f\in B^1(G,\al^*,\Pics_R(R))&\stackrel{\eqref{b1}}\Longleftrightarrow  f(g)=[P]\alpha^*_g([P^*][D_{g^{\m}}]),\,\,\,{\rm for\, some}\,\,\, [P]\in \Pic_R(R) \\
&\stackrel{\eqref{cong}}{\Longleftrightarrow }f(g)=[P][(D_g)_{g^{\m}}][P^*][(D_{g^{\m}})_g] , \,\,\,{\rm for\, some}\,\,\, [P]\in \Pic_R(R) \\
&\stackrel{\eqref{omp}}{\Longleftrightarrow }f(g)=[(\T^ P)_g\ot_R(D_{g^{\m}})_g],  \,\,\,{\rm for\, some}\,\,\, [P]\in \Pic_R(R) \\
&\stackrel{}{\Longleftrightarrow }f=(\zeta\circ\mathcal{L})([P]),  \,\,\,{\rm for\, some}\,\,\, [P]\in \Pic_R(R). \\
\end{align*}
Hence $(\zeta\circ\mathcal{L})(\Pic_\Z(R)^{(G)}) =B^1(G,\al^*,\Pics_R(R))$ and 
$$ \overline{H}^1(G,\overline\al^*,\Pics_0(R))=\dfrac{Z^1(G,\overline\al^*,\Pics_0(R))}{(\zeta\circ\mathcal{L})(\Pic_\Z(R)^{(G)})}=H^1(G,\alpha^*,{\bf PicS}_R(R)).$$

\noindent In addition,  we have that $\T(g)=[\T_0(g)],$ where $\T_0:G\ni g\mapsto D_g\delta_g\in R\star_\alpha G$ is the partial representation defined in \eqref{t0}. Since $\Delta(\T)=\Delta(\T_0)=R\star_\alpha G,$ then  Theorem \ref{isopic}  implies that  there is a group isomorphism
 $$\Pic_{R^\al}(R^\al) \simeq  \p_R(R\star_\alpha G/R)^{(G)}=\p_R(\Delta(\T)/R)^{(G)}.$$ 
Further,  since $\mathcal L=
\mathcal L_0$ it follows from Theorem  \ref{exac} that
$\B(R/R^\al)\stackrel{\eqref{etta}}\simeq \dfrac{\mathcal{C}(\T/R)}{{\rm Im}(\mathcal L_0)}=\mathcal{B}(\T/R),$ as desired.

\noindent Now we show that    the morphisms in \cite[p. 227]{DoPaPi2} can be obtained from the morphisms in Theorem \ref{md}. Notice that :
$$\xi^{\m} \circ\varphi_1 : H^1(G,\alpha , R)\ni {\rm cls}( \widetilde f)\mapsto \left[R^G\right]\in  \Pic_{R^\al}(R^\al) , $$ where $\xi$ is the group isomorphism defined in \eqref{isogr},
$R^G$ is given by  \eqref{mg} and the left $R\star_\alpha G$-module structure of $R$ is defined by  \eqref{triangle}, that is, for $r\in R,$ $g\in G$ and $a_g\in D_g$ 
\begin{align*}(a_g\delta_g)\cdot r&=\iota_f^{\m}((a_g\delta_g)(\iota_f(r))_f(1_{g^\m}\delta_{g^\m}))
= \iota_f^{\m} (a_g\alpha_g(r1_{g^\m})\delta_gf(1_{g^\m}\delta_{g^\m}))
\\&\stackrel{\eqref{forf}}= \iota_f^{\m} (a_g\alpha_g(r1_{g^\m})\delta_g  \widetilde f(g^\m) (1_{g^\m}\delta_{g^\m})) =a_g\alpha_g(r1_{g^\m})\alpha_g(\widetilde f(g^\m))
\stackrel{\eqref{z1}}=a_g\alpha_g(r1_{g^\m})\widetilde f(g)^\m.
\end{align*} 
 This implies that  $(\xi^{\m} \circ\varphi_1)({\rm cls}(\widetilde f))=\phi_1({\rm cls}(\widetilde f^{\m})),$ 
for all ${\rm cls}(\widetilde f)\in  H^1(G,\alpha , R),$ where $\phi_1$ is the group homomorphism defined in \cite[p. 202]{DoPaPi2}. 
Hence  $(\xi^{\m})^* \circ\varphi_1=\phi_1.$  

\noindent We pass to the second morphism. Notice that
$$
\begin{array}{c c c l}
	 \varphi_2 \circ \xi : & \Pic_{R^\al}(R^\al) & \longrightarrow & {\bf PicS}_R(R)^{\alpha^*}\cap {\bf Pic}_R(R)    \\
	&  [P_0] & \longmapsto & [P_0\ot_{R^\al} R]\end{array},$$ which coincides with the morphism $ \phi_2$ defined in \cite[p. 203]{DoPaPi2}.  

\noindent With respect to the third morphism,
we shall show that $\zeta_0\circ \varphi_3([P])=\phi_3([P^*]),$ where $\phi_3$ is the map  given in \cite[p. 205]{DoPaPi2}.  Take  $[P]\in {\bf PicS}_R(R)^{\alpha^*}\cap {\bf Pic}_R(R) $ and write $\phi_3([P]) = {\rm cls}( \omega_{-,-}),$ where the $2$-cocycle $ \omega_{-,-}$ is defined in \cite[p. 205]{DoPaPi2}.
Then  $\phi_3([P^*]) = {\rm cls}( \omega^\m_{-,-}).$ Let $\mathfrak a\in C^{1}(G, \alpha, R)$ be given by $\mathfrak a_g=\om_{g, g^{\m}},$ for all $g\in G.$ Then 
\begin{equation}\label{coba}(\delta^1 \mathfrak a)(g,h)=\al_g(\om_{h, h^{\m}}1_{g^{\m}})\om_{g, g^{\m}}\om^{\m}_{gh, (gh)^{\m}}.
\end{equation}
 By the $2$-cocycle identity we have $$\al_g(\om_{h, h^{\m}}1_{g^{\m}})=\om_{gh, h^{\m}}\om_{g,h}.$$  Moreover,  
$\al_{gh}(\om_{h^{\m}, g^{\m}})\om_{gh, (gh)^{\m}}=\om_{g, g^{\m}}\om_{gh, h^{\m}}.$ That is, $$\om_{g, g^{\m}} \om^{\m}_{gh, (gh)^{\m}}=\al_{gh}(\om_{h^{\m}, g^{\m}})\om^{\m}_{gh, h^{\m}}.$$    

\noindent Thus it follows by  \eqref{coba}, that
$(\delta^1 \mathfrak a)(g,h)=\om_{gh, h^{\m}}\om_{g,h}\al_{gh}(\om_{h^{\m}, g^{\m}})\om^{\m}_{gh, h^{\m}}=\om_{g,h}\al_{gh}(\om_{h^{\m}, g^{\m}}).$ Hence the map $\mathfrak b_{-,-}\in C^2(G,\alpha, R),$ given by $\mathfrak b_{g, h}=\al_{gh}(\om_{h^{\m}, g^{\m}}),$ belongs to $Z^2(G,\alpha, R)$ and is cohomologous to  $\omega^\m_{-,-},$ which implies   $\phi_3([P^*]) = {\rm cls}(\mathfrak b_{-,-}).$

Furthermore, from the proof of the surjectivity of   $\zeta $ in \cite[Theorem 4.4]{DoRo}, we have that $\zeta([\D(\Sigma)])={\rm cls}(\mathfrak b_{-,-}),$ where $\Sigma_g=(D_g)_{g^\m}, g\in G,$ and $\D(\Sigma)$ is endowed with the factor set 
$$f_{g,h}^\Sigma=\{f^\Sigma_{g,h}:\Sigma_g\otimes_R \Sigma_h\ni u_g\otimes_R u'_h \mapsto\mathfrak b_{g,h}u_g\alpha_g(u'_h1_{g^\m})\in 1_g\Sigma_{gh}\}_{g,h\in G}.$$  
But the definition of $\varphi_3$ implies $\varphi_3([P])=[\D(\T^P)],$  as given  in \eqref{omp}. We shall show that $[\D(\T^P)]=[\D(\Sigma)]$ in $C(\T/R).$  First,  one needs to define a morphism $F:\D(\T^P)\to \D(\Sigma).$   Since $[P]\in {\bf PicS}_R(R)^{\alpha^*}\cap {\bf Pic}_R(R),$ then $[P]$ is fixed by the partial action $\alpha ^{\ast},$ so that there is a $R$-bimodule isomorphism $\psi_g:P1_g\to (P1_{g^\m})_g,$ for each $g \in G,$  where the $R$-bimodule structure on $(P1_{g^\m})_g$ is given in \eqref{cena}.  In particular
\begin{equation}\label{fig}
\psi_g(rx)=\alpha_{g^{\m}}(r 1_g)\psi_g(x)\,\,\,\,\,\,\,\,\,\,\,\,{\rm and}\,\,\,\,\,\,\,\, \,\,\,\,\psi^{\m}_g(rx')=\alpha_{g}(r1_{g^{\m}})\psi^{\m}_g(x'),
\end{equation}
for all $ r\in R, x\in P1_g$ and $x'\in P1_{g^{\m}}.$
Now let  $F_g$ be the  composition of the $R$-bimodule isomorphisms:
\begin{align*}
\T^P_g&=P\ot_R (D_g)_{g^\m}\ot_R P^*=1_gP\ot_R (D_g)_{g^\m}\ot_R P^*\stackrel{\psi_g}\simeq (P1_{g^\m})_g\ot_R (D_g)_{g^\m}\ot_R P^*
\\& \stackrel{\eqref{cong}}\simeq (D_g)_{g^\m}\ot_R P\ot_R(D_{g^\m})_g\ot_R (D_g)_{g^\m}\ot_R P^*\stackrel{\eqref{isot}}\simeq (D_g)_{g^\m}\ot_R P\ot_RD_{g^\m}\ot_R P^*
\\& \simeq (D_g)_{g^\m}\ot_R D_{g^\m}\ot_R P\ot_R P^*
\simeq (D_g)_{g^\m}\ot_RR \simeq (D_g)_{g^\m}=\Si_g.
\end{align*}
Thus the image of $F_g:\T^P_g\to \Sigma_g$ at $p\ot_Rd_g\ot_Rf\in \T^P_g$ is given by

\begin{align*}
p\ot_Rd_g\ot_Rf&=1_gp\ot_Rd_g\ot_Rf\mapsto\psi_g(1_gp)\ot_Rd_g\ot_Rf\stackrel{\eqref{isocon}}\mapsto1_g\ot_R\psi_g(1_gp)\ot_R1_{g^\m}\ot_Rd_g\ot_Rf
\\&\mapsto1_g\ot_R\psi_g(1_gp)\ot_R\alpha_{g^\m}(d_g)\ot_Rf\mapsto1_g\ot_R\alpha_{g^\m}(d_g)\ot_R\psi_g(1_gp)\ot_Rf
\\&\mapsto d_g\ot_Rf(\psi_g(1_gp))\mapsto d_g\alpha_g\left(f(\psi_g(1_gp))1_{g^\m}\right).
\end{align*}
To determine   $F_g^{\m}(d_g)$  for each $d_g\in \Si_g$  take  $\{p_i,f_i\}_{1\leq i\leq n}$  a dual basis of $P$ as an $R$-module. Then
\begin{align*}
d_g&\mapsto d_g\ot_R 1_R\mapsto \sum_{i} d_g\ot_R1_{g^{\m}}\ot_R p_i\ot_R f_i\mapsto \sum_{i} d_g\ot_R p_i\ot_R1_{g^{\m}}\ot_R f_i
\\&\mapsto \sum_{i} d_g\ot_R p_i\ot_R1_{g^{\m}}\ot_R1_g\ot_R f_i \mapsto \sum_{i} \alpha _{g^\m}(d_g) p_i\ot_R1_g\ot_R f_i\\
& \mapsto  \sum_{i} \psi^{\m}_g (\alpha _{g^\m}(d_g) p_i) \ot_R1_g\ot_R f_i =  \sum_{i} \psi^{\m}_g ( p_i) \ot_R d_g \ot_R f_i.
\end{align*}
By Remark \ref{sod} it is enough to show that   the following  diagram is commutative:
	\begin{equation*}
	\xymatrix{ \T^P_g\ot_R\T^P_h\ar[rr]^{f_{g,h}^P}\ar[d]_{F_g\ot F_h} & & 1_g\T^P_{gh}\ar[d]^{F_{gh}}\\
		\Sigma_g\ot_R\Sigma_h\ar[rr]_{f_{g,h}^\Sigma} & & 1_g\Sigma_{gh} } \label{morfismodepcgp}
	\end{equation*}

Indeed, let $\mathfrak{P}_{g,h}=F_{gh}\circ f_{g,h}^{P}\circ( F^{\m}_g\otimes F^{\m}_h)(d_g \ot_R d'_h).$ Then
\begin{align*}
\mathfrak{P}_{g,h}&= \dsum_{i,j} F_{gh}\circ f_{g,h}^{P}(\psi^{\m}_g(p_i1_{g^{\m}})\ot_Rd_g\ot_R f_i\ot_R \psi^{\m}_h(p_j1_{h^{\m}})\ot_Rd'_h\ot_R f_j)\\
& \stackrel{\eqref{conjuntodefatoresparaP}}=\dsum_{i,j}F_{gh}(\psi^{\m}_g(p_i1_{g^{\m}})\ot_Rd_g\al_g(f_i( \psi^{\m}_h(p_j1_{h^{\m}}))d'_h1_{g^{\m}})\ot_R f_j)
\\&=\dsum_{i,j}F_{gh}(\psi^{\m}_g(p_i1_{g^{\m}})\al_g(f_i( \psi^{\m}_h(p_j1_{h^{\m}}))d'_h1_{g^{\m}})\ot_R 1_{gh}d_g\ot_R f_j)\\
&\stackrel{\eqref{fig}}=\dsum_{i,j}F_{gh}(\psi^{\m}_g(p_if_i( \psi^{\m}_h(p_j1_{h^{\m}}))d'_h1_{g^{\m}})\ot_R 1_{gh} d_g\ot_R f_j)
\\&=\dsum_{j}F_{gh}(\psi^{\m}_g( \psi^{\m}_h(p_j1_{h^{\m}}))d'_h1_{g^{\m}})\ot_R 1_{gh} d_g\ot_R f_j)
\\&=\dsum_{j}d_g\al_{gh}(f_j(\psi_{gh}(1_{gh}\psi^{\m}_g( \psi^{\m}_h(p_j1_{h^{\m}})d'_h1_{g^{\m}})))1_{(gh)^{\m}})
\\&=\dsum_{j}d_g\al_{gh}(f_j(\psi_{gh}(\psi^{\m}_g( \psi^{\m}_h(p_j1_{h^{\m}})d'_h1_{g^{\m}})))1_{(gh)^{\m}})
\\&=\dsum_{j}d_g\al_{gh}(f_j(\psi_{gh}(\psi^{\m}_g( \psi^{\m}_h(p_j\al_{h^{\m}}(d'_h1_{g^{\m}})))))1_{(gh)^{\m}})=\diamond. 
\end{align*}
It follows by \cite[p. 748]{DoPaPi} that $\psi_{(ab)^{\m}}\psi^{\m}_{b^{\m}}\psi^{\m}_{a^{\m}}(x)=\omega_{a,b}x,$ for any $a,b\in G$ and  $x\in 1_a1_{ab}P.$ Thus, taking $a=h^{\m}$ and $b=g^{\m},$ we have that $\psi_{gh} \psi^{\m}_g\psi^{\m}_h(x)=\omega_{h^{\m}, g^{\m}}x.$  Then
\begin{align*}
\diamond&=\dsum_{j}d_g\al_{gh}(f_j(\omega_{h^{\m}, g^{\m}}p_j\al_{h^{\m}}(d'_h1_{g^{\m}}))1_{(gh)^{\m}})
=\dsum_{j}d_g\al_{gh}(\omega_{h^{\m}, g^{\m}}\al_{h^{\m}}(d'_h1_{g^{\m}})f_j(p_j))
\\&=d_g\al_{gh}(\omega_{h^{\m}, g^{\m}}\al_{h^{\m}}(d'_h1_{g^{\m}}))
=d_g\al_{g}(d'_h1_{g^{\m}})\al_{gh}(\omega_{h^{\m}, g^{\m}})= d_g\al_{g}(d'_h1_{g^{\m}})\mathfrak b_{g,h}=f^\Sigma_{g,h}(d_g\otimes_R d'_h).
\end{align*}
Hence $(\zeta\circ \varphi_3)([P])=\zeta([\D(\T^P)])=\zeta([\D(\Sigma)])={\rm cls}(\mathfrak b_{-,-})=\phi_3([P^*]),$ thus $\zeta^*\circ \varphi_3=\phi_3.$

Concerning the next morphism,  consider, 
$$
\begin{array}{c c c l}
\bar\eta \circ \varphi_4: & H^2(G,\alpha, R) & \longrightarrow & \B(R/R^\alpha)
 \\&  {\rm cls}(w) & \longmapsto & \llbracket\D(\Sigma)\rrbracket \end{array},
$$
where $\bar\eta:  \B(\T/R) \to  \B(R/R^\alpha) $  is the isomorphism induced by \eqref{etta} and $\Sigma $ is as defined as above, but  with factor set
$$f_{g,h}^\Sigma=\{f^\Sigma_{g,h}:\Sigma_g\otimes_R \Sigma_h\ni u_g\otimes_R u'_h \mapsto\om_{g,h}u_g\alpha_g(u'_h1_{g^\m})\in 1_g\Sigma_{gh}\}_{g,h\in G}.$$
Then it follows from \eqref{prog}  that the map $ R\star_{\alpha, \omega} G\ni u_g\delta_g \mapsto u_g\in \D(\Sigma),$ is an $R^\alpha$-algebra isomorphism. Thus $\bar\eta \circ \varphi_4=\phi_4,$ where $\phi_4$ is the group homomorphism defined in (5.1) of \cite{DoPaPi2}.

\noindent With respect to the morphism $\phi_5$  (see \eqref{fi5})   we shall show that the map $\varphi_5 \circ \bar\eta^{\m}:   \B(R/R^\alpha)\to H^1(G,\alpha^*,{\bf PicS}_R(R)),$ satisfies $\varphi_5 \circ \bar\eta^{\m}(\llbracket A \rrbracket)= \phi_5(\llbracket A \rrbracket^\m)$. Indeed, take $\llbracket A \rrbracket\in  \B(R/R^\alpha),$ let $C_g$ be defined by  \eqref{defC}, 
 $f_A\in Z^1(G,\al^*,\Pics_R(R))$  be such that   $f_A(g)=[C_g],$ for all $g\in G,$  and set  $\Gm: G\ni g\mapsto f(g)\T(g)\in \Pics_{R^\alpha}(R).$   By Theorem \ref{Aisopcgp}, we have that 
$\eta([\D(\G)]^\m)=\llbracket A \rrbracket,$ that is $\bar\eta([\D(\G)]^\m{\rm Im}(\mathcal L_0))=\llbracket A \rrbracket,$ and by the definition of 
$\varphi _5$ we obtain
$$\varphi_5 \circ \bar\eta^{\m}(\llbracket A \rrbracket)=\varphi_5([\D(\G)]^\m{\rm Im}(\mathcal L))=\varphi_5\circ \mathcal{L}^c([\D(\G)]^\m) =((\zeta \mathcal{L})^c\circ\zeta)([\D(\G)]^\m).$$ 
By the proof of \cite[Theorem 4.1]{DoRo} we get that  the inverse element of the class $[\D(\G)] \in \C(\T/R)$ is given by $[\D(\G)]^{\m}=\left[ \bigoplus_{g\in G}\T_g\ot_R
\G_{g^\m}\ot_R\T_g\right],$ then  by \eqref {fom} we see  that  $\zeta([\D(\G)]^\m)=\bar\kappa,$ where  
\begin{align*}\bar\kappa: G\ni g& \mapsto [\T_g\ot_R
\G_{g^\m}\ot_R\T_g\ot_R\T_{g^{\m}} ] =  
[\T_g\ot_R \G_{g^\m}\ot_R  D_g  ]  = [\T_g\ot_R \G_{g^\m}\ot_R  \G _g \otimes _R \G _{g^\m}  ] \\ &=  [\T_g\ot_R
\G_{g^\m}] =[\T_g\ot_R C_{g^\m}\ot_R  \T_{g^\m}]
\stackrel{\eqref{cong}}=\alpha^*_g([C_{g^\m}])\stackrel{\eqref{z1}}=[C_g]^{\m}=f(g)^\m \in \Pics_R(R).
\end{align*}
From this we conclude that $\varphi_5 \circ \bar\eta^{\m}(\llbracket A \rrbracket)=(\zeta \mathcal{L})^c(f^\m)={\rm cls}(f^\m)=\phi_5(\llbracket A \rrbracket^\m).$ Thus  $\varphi_5 \circ (\bar\eta^{\m})^*=\phi_5.$  

\noindent Finally, we move to the last morphism. We shall show that $\varphi_6=\phi_6,$ where  $\phi_6$ is given in \cite[p.217-218]{DoPaPi2}.  Indeed, for $ f\in Z^1(G,\alpha^*,{\bf PicS}_R(R)),$ we have that $\varphi_6({\rm cls}(f))=(\varphi_6\circ(\zeta\mathcal{L})^c) (f)=\delta(f)={\rm cls}\left(\widetilde{\beta_{-,-,-}^f}\right).$ Since $f\in Z^1(G,\alpha^*,{\bf PicS}_R(R)) $ we obtain a partial representation $\G:G\ni g\mapsto f(g)\T(g)\in {\bf PicS}_{R^\alpha}(R)$  that satisfy the conditions of  \cite[Proposition 3.17]{DoRo}. Hence, this partial representation gives rise to a map  $\beta_{-,-,-}^f$ as in \cite[Corollary 3.16]{DoRo} and by the construction of $\widetilde{\beta_{-,-,-}^f},$ we know that  for all $g,h,l\in G$ the element $\widetilde{\beta_{g,h,l}^f}$ belongs to  $\U(D_gD_{gh}D_{ghl})$ and $\beta_{g,h,l}^f(x)=\widetilde{\beta_{g,h,l}^f}x,$ for all $x\in 1_g1_{gh}\G_{ghl}.$ Moreover, the element $\widetilde{\beta_{g,h,l}^f}$ is unique with this property.   With respect to $\phi_6$, observe that by the construction of the diagram (28) in \cite{DoPaPi2}, the map $\beta_{-,-,-}^f$ above coincides with $\omega,$ obtained  from the diagram  (28) in \cite{DoPaPi2}. Now if, $\phi_6({\rm cls}(f))={\rm cls}(\omega)$ then $\omega_{g,h,l}\in \U(D_gD_{gh}D_{ghl})$ is such that $\omega_{g,h,l}x=\widetilde\omega(x)=\beta_{g,h,l}^f(x),$ and we have $\omega_{g,h,l}=\widetilde{\beta_{g,h,l}^f},$ for all $g,h,l\in G$ and $x\in1_g1_{gh}\G_{ghl}.$  This implies $\varphi_6=\phi_6,$  as desired.
\end{dem}

\subsection{Some final remarks}\label{sec:final}
\begin{obs}\label{rem:final}
Let $\G:G\ni g\mapsto [\G_g]\in \Pics(R)$ be a partial representation endowed with a factor set,  $\G'_g$ be an $R$-bimodule in the isomorphism class of   $[\G_g],$ and let $a_g:\G'_g\to \G,$ be an   $R$-bimodule isomorphism.   Then the family

$$\mathcal F'=\{f'_{g,h}=a^{\m}_{gh}\circ f^\G_{g,h}\circ(a_g\ot_Ra_h): \G'_g\ot_R \G'_h\to 1_g\G'_{g,h}\}_{g,h\in G}$$
is a factor set for $\G$ such that    $\D'= \bigoplus\limits_{g\in G}\G'_g$ is a partial generalized crossed product with $[\D(\G)]=[\D'].$  Indeed, for $g,h,l\in G$ we have 

\begin{align*}
f'_{gh, l}\circ (f'_{g,h}\ot_R \G'_l)&=a^{\m}_{ghl}\circ f^\G_{gh,l}\circ(a_{gh}\ot_Ra_l)\circ ([a^{\m}_{gh}\circ f^\G_{g,h}\circ(a_g\ot_Ra_h)]\ot_R\G'_l)\\
&=a^{\m}_{ghl}\circ f^\G_{gh,l}\circ (f^\G_{g,h}\circ(a_g\ot_Ra_h)\ot_Ra_l)
\\&=a^{\m}_{ghl}\circ f^\G_{gh,l}\circ (f^\G_{g,h}\ot_R \G_l)\circ((a_g\ot_Ra_h)\ot_Ra_l)
\\&=a^{\m}_{ghl}\circ f^\G_{g,hl}\circ (\G_g \ot_R f^\G_{h, l} )\circ(a_g\ot_R (a_h\ot_Ra_l)),
\end{align*}
and
\begin{align*}
f'_{g,h l}\circ ( \G'_g\ot_R f'_{h,l})&=a^{\m}_{ghl}\circ f^\G_{g,hl}\circ(a_{g}\ot_Ra_{hl})\circ[ \G'_g \ot_R a^{\m}_{hl}\circ f^\G_{h,l}\circ(a_h\ot_Ra_l)] \\
&=a^{\m}_{ghl}\circ f^\G_{g,hl}\circ( a_{g} \ot_R (f^\G_{h,l}\circ(a_h\ot_Ra_l)))
\\&=a^{\m}_{ghl}\circ f^\G_{g,hl}\circ (\G_g \ot_R f^\G_{h, l} )\circ (a_g\ot_R(a_h\ot_Ra_l)),\end{align*} showing that $\D'$ is a partial generalized crossed product. Finally, 
 by the definition of $\mathcal F'$ the diagram \eqref{morfismodepcgp0} commutes and hence $[\D(\G)]=[\D'].$
\end{obs}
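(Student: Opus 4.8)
The plan is to establish the three assertions of the remark in turn: that $\mathcal{F}'$ is a factor set for the partial representation $g\mapsto[\G'_g]$, that $\D'=\bigoplus_{g\in G}\G'_g$ is consequently a partial generalized crossed product, and finally that $[\D(\G)]=[\D']$. The guiding principle throughout is that the family $a=\{a_g\}_{g\in G}$ of $R$-bimodule isomorphisms transports every structural identity of $f^\G$ to $\mathcal{F}'$ by conjugation, so that no genuinely new computation is needed beyond keeping track of domains and codomains; in particular Proposition~\ref{pcgpeanelcomutativocom1} and Remark~\ref{sod} will do the conceptual work once the diagrams are in place.

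First I would verify the factor set (associativity) condition for $\mathcal{F}'$. Since each $a_g$ is $R$-bilinear, it commutes with the action of the central idempotent $1_g$ and hence restricts to an isomorphism onto $1_g\G_g$, so the decorated codomains $1_g\G'_{gh}$ prescribed by $\mathcal{F}'$ match the ones appearing in the defining diagram of a factor set. Substituting $f'_{g,h}=a^{\m}_{gh}\circ f^\G_{g,h}\circ(a_g\ot_R a_h)$ into the two composites of that diagram, the interior factors $a_{gh}\circ a^{\m}_{gh}$ and $a_{hl}\circ a^{\m}_{hl}$ cancel, so that both composites reduce to $a^{\m}_{ghl}$ post-composed with the respective associativity composite of $f^\G$ and pre-composed with $a_g\ot_R a_h\ot_R a_l$. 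Because $f^\G$ is a factor set we have $f^\G_{gh,l}\circ(f^\G_{g,h}\ot_R\G_l)=f^\G_{g,hl}\circ(\G_g\ot_R f^\G_{h,l})$, and, using the canonical associativity of $\ot_R$ to identify $(a_g\ot_R a_h)\ot_R a_l$ with $a_g\ot_R(a_h\ot_R a_l)$, the two composites for $\mathcal{F}'$ coincide. This yields the factor set property, and Proposition~\ref{pcgpeanelcomutativocom1} then produces $\D'$ as an associative partial generalized crossed product.

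For the last assertion I would exhibit $F=\{a_g:\G'_g\to\G_g\}_{g\in G}$ as an isomorphism of partial generalized crossed products $\D'\to\D(\G)$. By the very definition of $\mathcal{F}'$ one has $a_{gh}\circ f'_{g,h}=f^\G_{g,h}\circ(a_g\ot_R a_h)$, which is exactly the commutativity of diagram~\eqref{morfismodepcgp0} for $F$. Since every $a_g$ is an $R$-bimodule isomorphism, Remark~\ref{sod} immediately upgrades $F$ to an isomorphism of partial generalized crossed products (so that the unit condition $F_1\circ\iota'=\iota$ need not be checked separately), giving $[\D(\G)]=[\D']$.

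I expect the only delicate point to be the bookkeeping of the central idempotents $1_g,1_{gh},1_{hl}$ decorating the codomains of the $f^\G_{g,h}$: one must confirm that each $a_g$ intertwines the relevant idempotent actions, so that the sources and targets of the composites line up precisely and the associativity diagram for $f^\G$ may be applied verbatim after the cancellations. As the $a_g$ are $R$-bilinear and the idempotents are central, this reduces to routine verification, and no homological or analytic input is required.
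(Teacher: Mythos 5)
Your proposal is correct and follows essentially the same route as the paper: you conjugate the associativity diagram of $f^\G$ by the isomorphisms $a_g$, cancel the interior factors $a_{gh}\circ a^{\m}_{gh}$ (resp. $a_{hl}\circ a^{\m}_{hl}$), invoke the factor set property of $f^\G$, and then observe that the defining relation $a_{gh}\circ f'_{g,h}=f^\G_{g,h}\circ(a_g\ot_R a_h)$ is exactly the commutativity of diagram \eqref{morfismodepcgp0} for $F=\{a_g\}$, so Remark~\ref{sod} gives $[\D(\G)]=[\D']$. Your explicit attention to the central idempotents and the citation of Remark~\ref{sod} only make explicit what the paper leaves implicit.
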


\begin{pro}
Let $Z^\C=\{f\in Z^1(G, \alpha^*, {\Pics(R)})\mid [\D(f\T)]\in \C(\T/R)\},$ then the following assertions hold:
\begin{enumerate}
\item $Z^\C$ is a subgroup of $Z^1(G, \alpha^*, {\Pics(R)})$ and contains $B^1(G, \alpha^*, {\Pics(R)}).$
\item  The map $\mathfrak c: Z^\C \ni f\mapsto [\D(f\T)]\in \C(\T/R),$ where $\D(f\T)$ is being considered with the factor set 
$ {\mathcal F}^{f^{\Gamma}}$ from the proof of Theorem~ \ref{exac}, is a group monomorphism,.
\item  The diagram
	$$\xymatrix{{\bf Pic}_R(R)  \ar[dd]_{(\delta^0)^*}\ar[rr]^{\mathcal L_0} & &\C(\T/R)\ar[dd]_{\zeta}\\
		& & \\
		B^1(G, \alpha^*, {\Pics(R)})\ar@{^(->}[rr]_{\iota} & & Z^\C(G, \alpha^*, {\Pics(R)})}$$
	is commutative. 

\end{enumerate}

\end{pro}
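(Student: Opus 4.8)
The plan is to deduce all three assertions from machinery already in place: the description of $\C(\T/R)$ in Theorem~\ref{prop:grupoC}, the multiplicativity identity $[\D(f\T)][\D(f'\T)]=[\D(ff'\T)]$ established inside the proof of Theorem~\ref{exac}, and the homomorphism $\zeta$ of \cite[Lemma 5.21]{DoRo}. The single computation that organizes everything is the observation that $\zeta$ inverts $f\mapsto[\D(f\T)]$: for any $f$ with $[\D(f\T)]\in\C(\T/R)$ we have, by \eqref{fom} applied with base representation $\T$,
$\zeta([\D(f\T)])(g)=[(f\T)_g\ot_R\T_{g^\m}]=f(g)\,\T(g)\,\T(g^\m)=f(g)[D_g]=f(g)$,
where the last steps use the unital isomorphism \eqref{isot} and that $[D_g]$ is the identity of $\mathcal X_g$. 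In particular $\zeta\circ\mathfrak c=\mathrm{id}_{Z^\C}$, and since $\zeta$ takes values in $Z^1(G,\al^*,\Pics_0(R))$ with $\Pics_0(R)=\Pics_R(R)$ (as in the proof of Theorem~\ref{galoise}), we also get $Z^\C\subseteq Z^1(G,\al^*,\Pics_R(R))$, so the ambient coefficient monoid causes no trouble.

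For assertion (1), the identity cocycle $I$ satisfies $I\T=\T$, which carries the factor set $f^\T$ of Lemma~\ref{fs}, whence $I\in Z^\C$. Closure under products is precisely the identity $[\D(f\T)][\D(f'\T)]=[\D(ff'\T)]$ proved in Theorem~\ref{exac}: its left-hand side lies in the group $\C(\T/R)$, so $[\D(ff'\T)]\in\C(\T/R)$ and $ff'\in Z^\C$. For inverses, given $f\in Z^\C$ the class $[\D(f\T)]^{\m}$ again belongs to $\C(\T/R)$, so by Theorem~\ref{prop:grupoC} it equals $[\D(h\T)]$ for some $h\in Z^1(G,\al^*,\Pics_R(R))\cap Z^\C$; applying the homomorphism $\zeta$ and using $\zeta\circ\mathfrak c=\mathrm{id}$ yields $h=\zeta([\D(f\T)]^{\m})=f^{\m}$, so $f^{\m}\in Z^\C$. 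Finally $B^1\subseteq Z^\C$ follows from the equality $(\zeta\circ\mathcal L_0)(\Pic_R(R))=B^1$ recorded in the proof of Theorem~\ref{galoise}, together with $\mathrm{Im}\,\zeta\subseteq Z^\C$.

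For assertion (2), the map $\mathfrak c$ is a homomorphism by the same product formula, provided $\D(f\T)$ is equipped with the factor set $\mathcal F^{f^\Gamma}$ of the proof of Theorem~\ref{exac}, which is exactly the normalization for which that formula was verified. Injectivity is immediate from $\zeta\circ\mathfrak c=\mathrm{id}_{Z^\C}$, exhibiting $\zeta$ as a left inverse of $\mathfrak c$; in fact $\mathfrak c$ is an isomorphism with inverse $\zeta$, though only the monomorphism property is claimed. (One may also argue directly: $[\D(f\T)]=[\D(\T)]$ forces the $R$-bimodule isomorphism $f(g)\T(g)\simeq\T(g)$, hence $f(g)[D_g]=[D_g]$ in $\Pics(R)$, and since $f(g)\in\U(\mathcal X_g)=\Pic(D_g)$ with identity $[D_g]$ this gives $f=I$.)

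For assertion (3), I would evaluate both composites on $[P]\in\Pic_R(R)$. Along $\zeta\circ\mathcal L_0$ one gets, using \eqref{omp} and the boxed computation above, the cocycle $g\mapsto[\T^P_g\ot_R\T_{g^\m}]=[P]\,\al^*_g([P^*][D_{g^\m}])$, exactly as in the proof of Theorem~\ref{galoise}. Along $\iota\circ(\delta^0)^*$ one has $(\delta^0)^*([P])=\delta^0([P]^{\m})=\delta^0([P^*])$, and \eqref{b1} gives $\delta^0([P^*])(g)=\al^*_g([P^*][D_{g^\m}])[P^*]^{\m}=\al^*_g([P^*][D_{g^\m}])[P]$, which agrees with the previous expression by commutativity of $\Pics(R)$. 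Thus the square commutes. The main obstacle throughout is not conceptual but one of bookkeeping, and it has two sources: first, one must confirm that the factor set $\mathcal F^{f^\Gamma}$ used in defining $\mathfrak c$ is literally the one for which Theorem~\ref{exac} proves $[\D(f\T)][\D(f'\T)]=[\D(ff'\T)]$; second, one must track the involution $(-)^*$, which is forced precisely because $\mathcal L_0$ is built from $\T^P=P\ot_R(\cdot)\ot_R P^*$ whereas $\delta^0$ is built from $[P]$ and $[P]^{\m}=[P^*]$. Reconciling these two normalizations is the one place where the $*$ in the lower map is essential and where care is genuinely required.
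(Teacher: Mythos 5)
Your proof is correct, and its skeleton coincides with the paper's own: closure of $Z^\C$ under products and the homomorphism property of $\mathfrak c$ are obtained, exactly as in the paper, from the identity $[\D(f\T)][\D(f'\T)]=[\D(ff'\T)]$ established in the proof of Theorem~\ref{exac}, and injectivity of $\mathfrak c$ from the retraction $\zeta\circ\mathfrak c=\mathrm{id}_{Z^\C}$, which is also the paper's argument. Where you genuinely diverge is in the two remaining steps of assertion (1). For closure under inverses the paper re-enters the Azumaya machinery of \cite{DoPaPi2}: it produces $\llbracket A\rrbracket\in\B(R/R^\al)$ with $\phi_5(\llbracket A\rrbracket)={\rm cls}(f^\m)$ via \cite[Lemma 6.6]{DoPaPi2}, and \cite[Proposition 6.5]{DoPaPi2} then yields a factor set for $f^\m\T$; you instead stay inside the group $\C(\T/R)$, writing $[\D(f\T)]^\m=[\D(h\T)]$ by Theorem~\ref{prop:grupoC} and identifying $h=f^\m$ by applying the homomorphism $\zeta$ --- shorter and self-contained within this paper, at the price of leaning on the cited fact that $\zeta$ is a group homomorphism. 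For $B^1\subseteq Z^\C$ the paper computes $f(g)\ot_R\T_g\simeq\T^{P^*}_g$ directly and invokes Remark~\ref{rem:final}, which has the side benefit of exhibiting the explicit identification $[\D(f\T)]=\mathcal L_0([P^*])$ together with a concrete factor set; your route through $(\zeta\circ\mathcal L_0)({\bf Pic}_R(R))=B^1$ is cleaner but less explicit, and you should spell out the inclusion ${\rm Im}\,\zeta\subseteq Z^\C$ you use (it does follow: by Theorem~\ref{prop:grupoC} every class is $[\D(f\T)]$ with $f\in Z^1$, hence $f\in Z^\C$, and your key computation gives $\zeta([\D(f\T)])=f$). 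In assertion (3) you actually supply the ``routine calculation'' the paper omits, and it matches the chain already recorded in the proof of Theorem~\ref{galoise}; one cosmetic correction there: the final agreement is not by ``commutativity of $\Pics(R)$'' --- that monoid is not commutative in general, even over commutative $R$ --- but by commutativity of $\Pics_R(R)$, which suffices because both $[P]$ and $\al^*_g([P^*][D_{g^\m}])$ are classes of central bimodules.
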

\begin{proof} (1) Take $f\in B^1(G, \alpha^*, {\Pics(R)}).$ Then by \eqref{b1}, there exists $[P]\in {\bf Pic}_R(R)$ such that $f(g)\simeq P^*\ot_R \T_g \ot_R P\ot_R \T_{g^{\m}}.$ Then  
$$f(g)\ot_R \T_g\simeq P^*\ot_R \T_g \ot_R P\ot_R \T_{g^{\m}}\ot_R\T_g\simeq P^*\ot_R \T_g \ot_R P\ot_R D_{g^{\m}}\simeq  P^*\ot_R \T_g \ot_R P\simeq \T^{P^*}.$$ It follows by Remark~\ref{rem:final}  that $\Delta(f\T)$ is a partial generalized crossed product and $[\Delta(f\T)]=\mathcal L_0([P^*])\in \C(\T/R),$ which shows that  $B^1(G, \alpha^*, {\Pics(R)})\subseteq Z^\C$. To show that $Z^\C$ is a group, take $f_1,f_2\in Z^\C,$ then it follows by the proof of Theorem  \ref{exac} that $f_1f_2\in  Z^\C.$ Moreover,  by \cite[Lemma 6.6]{DoPaPi2} there is 
 $\llbracket A \rrbracket \in \B(R/R^\al)$ such that $ \phi_5(\llbracket A \rrbracket)={\rm cls}(f^\m_1),$ which by \cite[Proposition 6.5]{DoPaPi2}   implies that $\Delta(f^\m_1\T)$ is a partial generalized crossed product. Thus $f^\m\in Z^\C,$ and $Z^\C$ is a subgroup of $Z^1(G, \alpha^*, {\Pics(R)}).$

(2)  The fact that we have a group  homomorphism follows  from Theorem~\ref{prop:grupoC}  and  the proof of  Theorem  \ref{exac}.  To see that $\mathfrak c$ is a monomorphism, consider  the morphism   $\zeta: C(\T/R)\ni [\D(\Om)]\mapsto f^\Om \in Z^1(G, \alpha^*, {\Pics(R)})$ defined in \eqref{fom}. Then it is clear that 
$  \zeta \circ \mathfrak c $ is the identity in $C(\T/R).$ 

(3) A routine calculation shows that the diagram is commutative.
\end{proof}

Our final remarks gives us a relation between the classes of partial generalized crossed products and epsilon-strongly graded rings.
\begin{obs}  Let $\G: G\to \Pics(R')$ be a unital partial representation endowed with a factor set. Then it follows by  Proposition   \ref{pcgpeanelcomutativocom1} and \eqref{prog}  that $\D(\G)$   is a $G$-graded ring. Moreover, by  \cite[Lemma 3.8]{DoRo}, \cite[(iii) Proposition 7]{NyOP} and \cite[Lemma 3.6]{BMP}  we have  that the class of partial generalized crossed products  associated  to unital partial representations of $G$ in $\Pics(R'),$ coincides with the class  of epsilon-strongly $G$-graded rings. \footnote{In  \cite[Lemma 3.6]{BMP}  it is assumed that  $R$ is commutative, but examining its proof one can realize  that this condition is not necessary.}  Denote by $\G^{\F}$  the partial representation $\G$  endowed with a factor set $\F.$ By  Remark  \ref{rem:final} we have  $C_0(\G/R')=\{[\G^{\F}]\mid \F\,\, \text{is a factor set for }\, \,\G\}.$ Since  two partial generalized crossed product are isomorphic, if and only if, they are isomorphic as graded rings, then $C_0(\G/R')$ coincides with the set $C(R',\G)$ defined in \cite[p. 234]{NysOinPin3}. Hence \cite[Theorem 3]{NysOinPin3} extends partially  \cite[Theorem 4.4]{DoRo} to the realm 
of  epsilon-strongly groupoid graded rings.
 \end{obs}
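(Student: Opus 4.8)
The statement bundles four assertions, and the plan is to verify each in turn, most of them reducing to the characterizations already cited. First I would record the grading of $\D(\G)$: writing $\D(\G)=\bigoplus_{g\in G}\G_g$ and using the multiplication rule \eqref{prog}, the product of $u_g\in\G_g$ and $u_h\in\G_h$ lies in $1_g\G_{gh}\subseteq\G_{gh}$, so that $\D(\G)_g\,\D(\G)_h\subseteq\D(\G)_{gh}$; combined with Proposition~\ref{pcgpeanelcomutativocom1}, which guarantees associativity, a unit, and $R'\simeq\G_1$ as the degree-$1$ component, this exhibits $\D(\G)$ as a unital $G$-graded ring.

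Next I would establish the two-way identification with epsilon-strongly graded rings. In one direction, the unital condition $\G_g\ot_{R'}\G_{g^\m}\simeq R'1_g$ translates, inside $\D(\G)$, into $\D(\G)_g\,\D(\G)_{g^\m}=R'1_g$ with $1_g$ a central idempotent, which is exactly the epsilon-strong condition (with $\e_g=1_g$); this is the content of \cite[Lemma 3.8]{DoRo}. In the other direction, given an epsilon-strongly $G$-graded ring $S=\bigoplus_g S_g$, I would set $R'=S_1$, take $\G_g=S_g$ as an $R'$-bimodule, and read off a factor set from the restriction of the multiplication $S_g\times S_h\to S_{gh}$; here \cite[(iii) Proposition 7]{NyOP} supplies the left and right finite generation and projectivity of each $S_g$ over $R'$, and \cite[Lemma 3.6]{BMP} yields the partial-representation identities, so that $\G\colon G\to\Pics(R')$ is a unital partial representation with $\D(\G)\cong S$ as graded rings. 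The delicate point is the footnote: since \cite[Lemma 3.6]{BMP} is stated for commutative $R$, I would reread its argument to confirm that commutativity is never actually invoked.

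I would then identify $C_0(\G/R')$ with the factor-set picture. If $[\D(\Omega)]\in C_0(\G/R')$, then $\Omega_g\simeq\G_g$ for every $g$; choosing such isomorphisms and transporting the factor set of $\Omega$ through them, Remark~\ref{rem:final} produces a factor set $\F$ on $\G$ with $[\D(\G^\F)]=[\D(\Omega)]$, while conversely any factor set $\F$ on $\G$ gives a partial generalized crossed product with components $\G_g$, hence a class lying in $C_0(\G/R')$ by \eqref{eq:Gamma}. This yields $C_0(\G/R')=\{[\G^\F]\mid \F\text{ is a factor set for }\G\}$. To match this with $C(R',\G)$ of \cite[p.~234]{NysOinPin3}, I would unwind the definition of a morphism of partial generalized crossed products \eqref{morfismodepcgp0} and verify that a family $\{F_g\}$ making that diagram commute is precisely the datum of a degree-preserving ring homomorphism of the associated graded rings acting by the identity on $R'$; this gives the equivalence ``isomorphic as partial generalized crossed products $\iff$ isomorphic as graded rings'', and hence the identification of the two index sets.

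The concluding sentence is then immediate: since $C_0(\G/R')=C(R',\G)$, and the computation of $C_0(\G/R')$ in \cite[Theorem 4.4]{DoRo} is exactly the group-graded special case of the groupoid-graded statement \cite[Theorem 3]{NysOinPin3}, the latter extends the former. I expect the main obstacle to be the converse half of the epsilon-strong identification, namely checking, from \cite[(iii) Proposition 7]{NyOP} and \cite[Lemma 3.6]{BMP} with the commutativity hypothesis removed, that an arbitrary epsilon-strongly graded ring genuinely yields a unital partial representation into $\Pics(R')$ carrying an honest factor set, together with the bookkeeping required to turn the commuting diagram \eqref{morfismodepcgp0} into a clean statement about graded ring isomorphisms.
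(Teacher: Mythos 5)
Your proposal takes the same route as the paper's own argument: the grading of $\D(\G)$ via Proposition~\ref{pcgpeanelcomutativocom1} and \eqref{prog}, the two-way identification with epsilon-strongly $G$-graded rings through \cite[Lemma 3.8]{DoRo}, \cite[(iii) Proposition 7]{NyOP} and \cite[Lemma 3.6]{BMP} (including the footnoted point that commutativity in the latter is not actually used), the description $C_0(\G/R')=\{[\G^{\F}]\mid \F \text{ a factor set for } \G\}$ via Remark~\ref{rem:final}, and the matching with $C(R',\G)$ of \cite{NysOinPin3} through the observation that isomorphism of partial generalized crossed products in the sense of \eqref{morfismodepcgp0} is the same as graded-ring isomorphism fixing $R'$. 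This is exactly the chain of identifications the remark relies on, so the proposal is correct and essentially identical to the paper's reasoning.
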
 
 
 \section{Acknowledgments} The first named author was partially supported by 
Funda\c c\~ao de Amparo \`a Pesquisa do Estado de S\~ao Paulo (Fapesp), process n°:  2020/16594-0, and by  Conselho Nacional de Desenvolvimento Cient\'{\i}fico e Tecnol{\'o}gico (CNPq), process n°: 312683/2021-9. The second named author was supported by Fapesp, process n°: 2023/14066-5.

\end{document}